\theoremstyle{plain}
\newtheorem{theorem}{Theorem}[section]
\newtheorem*{theorem*}{Theorem}
\newtheorem{lemma}[theorem]{Lemma}
\newtheorem{proposition}[theorem]{Proposition}
\newtheorem{corollary}[theorem]{Corollary}
\theoremstyle{definition}
\newtheorem{definition}[theorem]{Definition}
\newtheorem{example}[theorem]{Example}
\newtheorem*{example*}{Example}
\theoremstyle{remark}
\numberwithin{equation}{section}
\newcommand{\vol}{\mathrm{vol}}
\newcommand{\bd}{\mathrm{bd}}
\newcommand{\F}{\mathcal{F}}
\newcommand{\Sp}{\mathrm{Sp}}
\newcommand{\R}{\mathbb{R}}
\renewcommand{\S}{\mathbb{S}}
\renewcommand{\H}{\mathbb{H}}
\newcommand{\B}{\mathbb{B}}
\newcommand{\inter}{\mathrm{int}}
\renewcommand{\exp}{\mathrm{exp}}
\newcommand{\M}{\mathfrak{M}}
\newcommand{\K}{\mathcal{K}}
\begin{document}

\author{\large Florian Besau\footnote{ supported by the European Research Council (ERC) 
within the
project ``Isoperimetric Inequalities and Integral Geometry''
Project number: 306445.}
\and 
\large Elisabeth M. Werner\footnote{ partially supported by an NSF grant.}
}
\date{}

\title{\Large The Floating Body in Real Space Forms}
\maketitle

\let\thefootnote\relax\footnotetext{\noindent
	Keywords:  hyperbolic convex geometry, hyperbolic floating body, affine surface area\\
	2010 MSC classes:  52A55 (Primary), 28A75, 52A20, 53A35 (Secondary)
}

\vspace{-1cm}
\begin{abstract}
\addcontentsline{toc}{section}{Abstract}
\noindent\footnotesize \textbf{Abstract.} \noindent
We carry out a systematic investigation on floating bodies in real space forms.
A new unifying approach not only allows us to treat the important classical case 
of Euclidean space as well as the recent extension to the Euclidean unit sphere,
but also the new extension of floating bodies to hyperbolic space.

Our main result establishes a relation between the derivative of the volume of the
floating body and a certain surface area measure, which we called the floating area.
In the Euclidean setting the floating area coincides with the well known affine surface area,
a powerful tool in the affine geometry of convex bodies.
\end{abstract}

\renewcommand{\citeform}[1]{\textbf{#1}}

\section{Introduction}

Two important closely related notions in affine convex geometry are the floating
body and the affine surface area of a convex body.
The floating body of a convex body is obtained by cutting off caps 
of volume less or equal to a fixed positive constant $\delta$.
Taking the right-derivative of the volume of the floating body gives rise to the affine surface area. 
This was established for all convex bodies in all dimensions by Schütt and Werner
in \cite{Schuett:1990}.

The affine surface area was introduced by Blaschke in 1923 \cite{Blaschke:1923}.
Due to its important properties, which make it an effective and powerful tool, it
is omnipresent in geometry.
The affine surface area and its generalizations in the rapidly developing $L_p$ and
Orlicz Brunn--Minkowski theory are the focus of intensive investigations (see e.g.\
\cites{Lutwak:1993,Lutwak:1996,Gardner:2015,Gardner:2014,Werner:2010, Werner:2011,
Chou:2006,Gardner:1998,Ye:2015,Ye:2015a,Schuett:2004}).

A first characterization of affine surface area was achieved by Ludwig and Reitzner
\cite{Ludwig:1999} and had a profound impact on valuation theory of convex bodies. 
They started a line of research 
(see e.g.\ \cites{Ludwig:2010,Ludwig:2010a,Ludwig:2010c,Parapatits:2012,Schuster:2012,
Haberl:2012,Parapatits:2013})
leading up to the very recent characterization of all centro-affine valuations by Haberl
and Parapatits \cite{Haberl:2014}.

There is a natural inequality associated with affine surface area, the affine isoperimetric
inequality, which states that among all convex bodies, with fixed volume, affine surface 
area is maximized for ellipsoids. 
This inequality has sparked interest into affine isoperimetric inequalities with a 
multitude of results 
(see e.g.\ \cites{Lutwak:1996,Lutwak:2000, Lutwak:2002, Haberl:2009, Haberl:2009a, 
Haberl:2012,Zhang:1999,Cianchi:2009, Bernig:2014, Ye:2015, Ye:2015a, Livshyts:2015,
Werner:2010}).

There are numerous other applications for affine surface area, such as, 
the approximation theory of convex bodies by polytopes 
\cites{Gruber:1988,Gruber:1993, Ludwig:1999a, Boeroeczky:2000, Boeroeczky:2000a,
Schuett:1991, Schuett:2000, Schuett:2003, Schuett:2004, Paouris:2012, Werner:2012, 
Paouris:2013, Reitzner:2002}, 
affine curvature flows \cites{Andrews:1996,Andrews:1999,Ivaki:2013,Ivaki:2013a,Ivaki:2015},
information theory \cites{Caglar:2014, Caglar:2014a, Caglar:2015, Artstein-Avidan:2012} and 
partial differential equations \cite{Lutwak:1995}.

In this paper we introduce the floating bodies for spaces of constant curvature,
i.e., real space forms.
Our considerations lead to a new surface area measure for convex bodies, which
we call the floating area.
This floating area is intrinsic to the constant curvature space and not only
coincides with affine surface area in the flat case, but also has similar properties
in the general case.
Namely, the floating area is a valuation and upper semi-continuous.

We lay the foundation for further investigations of floating bodies and the
floating area of convex bodies in more general spaces. The authors believe that
both notions are of interest in its own right and will in particular be useful
for applications, such as, isoperimetric inequalities and approximation theory
of convex bodies in spaces of constant curvature.

\subsection{Statement of principal results}

A real space form is a simply connected complete Riemannian manifold with constant sectional curvature 
$\lambda$. For $\lambda\in \R$ and $n\in\mathbb{N}, n\geq2$, we denote by $\Sp^n(\lambda)$ the real space form of 
dimension $n$ and curvature $\lambda$. 
This includes the special cases of the sphere $\S^n=\Sp^n(1)$, hyperbolic space $\H^n=\Sp^n(-1)$ and Euclidean 
space $\R^n=\Sp^n(0)$.
A compact (geodesically) convex set $K$ is called a convex body. 
The set of convex bodies in $\R^n$ with non-empty interior is denoted by $\mathcal{K}_0(\R^n)$, or 
$\mathcal{K}_0(A)$ if we consider convex bodies contained in an open subset $A\subset\R^n$. The set of convex 
bodies in a space form with non-empty interior is denoted by $\K_0(\Sp^n(\lambda))$, $\K_0(\S^n)$ or 
$\K_0(\H^n)$. For further details we refer to Section 3.

A hyperplane in a real space form $\Sp^n(\lambda)$ is a totally geodesic hypersurface. It is isometric to $\Sp^{n-1}(\lambda)$.
Hyperplanes split the space into two open and connected parts which are half-spaces. We 
denote by $H^+$ and $H^-$ the closed half-spaces bounded by the hyperplane $H$.
The standard volume measure on $\Sp^n(\lambda)$ is $\vol_n^\lambda$.

\begin{definition}[$\lambda$-Floating Body]
Let $\lambda\in\R$ and $K\in\K_0(\Sp^n(\lambda))$. For $\delta>0$ the \emph{$\lambda$-floating body} 
$\F_\delta^\lambda\, K$ is defined by
\begin{align*}
	\F_\delta^\lambda\, K =\bigcap \left\{ H^- : \vol_n^\lambda\left(K\cap H^+\right)\leq 
\delta^{\frac{n+1}{2}}\right\}.
\end{align*}
\end{definition}

The main theorem of this article is the following:
\begin{theorem}\label{thm:intro_main}
	Let $n\geq 2$. If $K\in\K_0(\Sp^n(\lambda))$, then
	the right-derivative of $\vol_n^\lambda(\F_\delta^\lambda\, K)$ at $\delta=0$ exists. More precisely, we 
have
	\begin{align*}
		\lim_{\delta\to 0^+} \frac{\vol_n^\lambda(K)-\vol_n^\lambda(\F_\delta^\lambda\, K)}{\delta} = c_n 
\Omega^\lambda(K),
	\end{align*}
	where $c_n=\tfrac{1}{2}\left((n+1)/\kappa_{n-1}\right)^{2/(n+1)}$ and
	\begin{align*}
		\Omega^\lambda(K) = \int\limits_{bd\, K} H_{n-1}^{\lambda}(K,x)^{\frac{1}{n+1}}\, d\vol_{\bd\, 
K}^\lambda(x).
	\end{align*}
	We call $\Omega^\lambda(K)$ the \emph{$\lambda$-floating area} of $K$.
\end{theorem}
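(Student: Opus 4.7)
The plan is to adapt the classical Schütt--Werner strategy from the Euclidean setting to constant curvature by localising every computation to Fermi normal coordinates along $\bd K$. First, for each $x\in\bd K$ and sufficiently small $\delta>0$, introduce a floating height $t(x,\delta)$, namely the depth measured along the inward geodesic normal from $x$ of the cap $K\cap H^+$ of volume exactly $\delta^{(n+1)/2}$ whose bounding hyperplane $H$ is tangent to $\F_\delta^\lambda K$. The wet region $K\setminus\F_\delta^\lambda K$ can be foliated by inner parallel hypersurfaces of $\bd K$, and a Fubini / coarea argument, using the Jacobi--field volume density of $\exp$ in $\Sp^n(\lambda)$, yields a representation
\begin{align*}
\vol_n^\lambda(K)-\vol_n^\lambda(\F_\delta^\lambda\, K) = \int_{\bd K} f(x,\delta)\, d\vol_{\bd K}^\lambda(x)
\end{align*}
for an explicit integrand $f(x,\delta)$ built from $t(x,\delta)$ and the exponential map.

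The next step is a pointwise asymptotic analysis at a boundary point $x_0$ where $K$ is twice differentiable in the sense of Aleksandrov; the Euclidean theorem of Aleksandrov extends to convex hypersurfaces in $\Sp^n(\lambda)$, so such points have full $\vol_{\bd K}^\lambda$-measure. In Fermi normal coordinates centred at $x_0$, the boundary of $K$ is, to second order, the graph of a positive semidefinite quadratic form whose determinant equals the generalised Gauss--Kronecker curvature $H_{n-1}^\lambda(K,x_0)$, while the ambient metric, volume element, and the cap-defining hyperplane all agree with their Euclidean counterparts to leading order. A direct computation in this local model, essentially that of \cite{Schuett:1990} with curvature corrections of strictly higher order in $t$, shows that a cap of depth $t$ has volume asymptotic to a universal constant times $H_{n-1}^\lambda(K,x_0)^{-1/2}\, t^{(n+1)/2}$; inverting this gives
\begin{align*}
\lim_{\delta\to 0^+}\frac{f(x_0,\delta)}{\delta}=c_n\, H_{n-1}^\lambda(K,x_0)^{\frac{1}{n+1}},
\end{align*}
with the convention that the right-hand side vanishes when $H_{n-1}^\lambda(K,x_0)=0$.

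To conclude I would invoke dominated convergence. A uniform, $\vol_{\bd K}^\lambda$-integrable majorant for $f(\cdot,\delta)/\delta$ can be extracted from a global convex-geometric bound on the cap depth $t(x,\delta)$ in terms of a power of a Euclidean ``height-of-cap'' function, valid uniformly for all $x\in\bd K$ and all small $\delta$; here one also uses that $K$ is contained in a small geodesic ball on which the space form is bi-Lipschitz to $\R^n$. The main obstacle, in my view, is not the pointwise limit itself but rather making the reduction to Euclidean local geometry \emph{uniform}: one must quantify that the ambient curvature $\lambda$ contributes only higher-order corrections to the cap-volume formula, uniformly in $x\in\bd K$, and combine this with a workable global bound on $t(x,\delta)$ to secure the dominating function. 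Once these ingredients are in place, dominated convergence together with the pointwise identity above yields both the existence of the right-derivative and the stated formula.
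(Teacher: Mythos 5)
Your proposal takes a genuinely different route from the paper. You work intrinsically in $\Sp^n(\lambda)$, parametrizing the wet region by the normal exponential flow off $\bd\, K$ (Fermi coordinates and inner parallel hypersurfaces), and then try to transfer the local cap-volume asymptotics from the Euclidean case by a bi-Lipschitz comparison. The paper instead passes to the projective Euclidean model $(\B^n(\lambda),g^\lambda)$, where geodesics are \emph{literally} straight lines and totally geodesic hyperplanes are affine hyperplanes; this turns $\F^\lambda_\delta K$ into a \emph{weighted} Euclidean floating body $\F^\mu_\delta K$ with $d\mu = (1+\lambda\|x\|^2)^{-(n+1)/2}dx$, allows a cone-volume formula from a fixed interior point (Proposition~\ref{prop:lconevol}), and---crucially---yields a sandwich $\F^e_{\delta_1}K \subseteq \F^\lambda_\delta K \subseteq \F^e_{\delta_2}K$ with $\delta_1,\delta_2$ explicit multiples of $\delta$ (Lemma~\ref{lem:lfloatbound}). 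Both the integrable majorant and the pointwise limit are then lifted directly from Schütt--Werner's Euclidean lemmas. The paper's route buys exactly the uniform control you identify as the obstacle; your route would require re-proving it from scratch in the curved setting.

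Concretely, there are two gaps. First, the coarea representation via inner parallel hypersurfaces is delicate for a general (non-smooth) convex body: the normal flow from $\bd\, K$ is only injective up to the cut locus of $\bd\, K$, and your $t(x,\delta)$ (depth along the inward normal of the cap tangent to $\F^\lambda_\delta K$) is not visibly a measurable function of $x$ that integrates to the wet volume; the paper sidesteps this with radial coordinates from an interior origin, for which the Jacobian is elementary and the boundary parametrization is trivially bijective. Second, and more seriously, the reduction to Euclidean local geometry via ``bi-Lipschitz to $\R^n$'' is not strong enough to control cap volumes with the required precision: the asymptotics $\vol(\text{cap of depth }t)\sim c\,H_{n-1}^{-1/2}t^{(n+1)/2}$ are degree-sensitive, and a bi-Lipschitz map distorts them by multiplicative constants, which ruins the sharp constant $c_n$. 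What is actually needed is a geodesic-preserving chart (so that halfspaces map to halfspaces), together with a quantitative, $x$-uniform comparison of the volume density---which is precisely what the Beltrami model and Lemma~\ref{lem:lfloatbound} supply. As written, your dominating function remains a conjecture rather than a construction; absent that, dominated convergence cannot be invoked, and the proof is incomplete.
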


Here $\vol_{\bd\, K}^\lambda$ denotes the natural boundary measure with respect to $\Sp^n(\lambda)$ and 
$H_{n-1}^{\lambda}(K,x)$ denotes the (generalized) Gauss-Kronecker curvature on $\bd\, K$, the boundary of $K$, with respect to 
$\Sp^n(\lambda)$ (see Section 3 for details). Furthermore, $\kappa_n$ is the volume of the Euclidean unit Ball 
$B^n_e(0,1)$ in $\R^n$, i.e., $\kappa_n=\vol_n^e(B_e^n(0,1))$.

For $\lambda=0$, i.e.\ Euclidean space, Theorem \ref{thm:intro_main} was first proved in this form by 
Schütt and Werner \cite{Schuett:1990}. For $\lambda=1$, the theorem was established only very recently by the 
authors \cite{Besau:2015a}. 
In this article we now prove the complete form for all $\lambda\in \R$ with a new unifying approach. 
In Section 2 we recall important notions from Euclidean convex geometry. In particular, we investigate the weighted floating 
body. In Section 3 we recall basic facts from hyperbolic geometry. We use the projective Euclidean model and relate hyperbolic convex bodies with Euclidean convex bodies. It is well-known that real space forms admit Euclidean models. We make use of this fact to generalize our results in Subsection 3.2 to real space forms.
The Euclidean models and the results on the weighted floating body are the main tool to prove Theorem \ref{thm:intro_main} in Section 4. 
In Section 5 we investigate the floating area and also the surface area measure of Euclidean convex bodies related to it. In particular, we show the following.

\newpage
\begin{theorem}\label{thm:intro_main2}
	Let $\lambda\in\R$ and $n\in\mathbb{N}$, $n\geq 2$. Then the $\lambda$-floating area $\Omega^\lambda\colon 
\mathcal{K}_0(\Sp^n(\lambda))\to \R$ is
	\begin{enumerate}
		\item[(a)] upper semi-continuous,
		\item[(b)] a valuation, that is, for $K,L\in\mathcal{K}_0(\Sp^n(\lambda))$ such that $K\cup L \in 
\mathcal{K}_0(\Sp^n(\lambda))$ we have that
		\begin{align*}
			\Omega^\lambda(K)+\Omega^\lambda(L) = \Omega^\lambda(K\cup L) + \Omega^\lambda(K\cap L),
		\end{align*}
		\item[(c)] and invariant under isometries of $\Sp^n(\lambda)$. For $\lambda=0$, $\Omega^0$ coincides 
with the affine surface area and is invariant not only under isometries, but all (equi-)affine transformations 
of $\R^n$.
	\end{enumerate}
\end{theorem}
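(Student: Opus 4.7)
The plan is to prove the three properties separately, using the boundary integral formula for $\Omega^\lambda$ from Theorem~\ref{thm:intro_main} together with its limit representation as a derivative of volume. For part (c), both factors in the integrand, namely the boundary volume measure $\vol_{\bd\, K}^\lambda$ and the generalised Gauss-Kronecker curvature $H_{n-1}^\lambda(K,\cdot)$, are intrinsic to the Riemannian geometry of $\Sp^n(\lambda)$ and are transported correctly by any isometry of $\Sp^n(\lambda)$, so invariance follows by change of variables in the integral. For $\lambda=0$, one identifies $\Omega^0$ with the classical affine surface area $\int_{\bd\, K}\kappa(K,x)^{1/(n+1)}\,d\vol_{\bd\, K}^e(x)$, whose equi-affine invariance is due to Blaschke.

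For part (b), I would work directly with the boundary integral. If $K\cup L$ is convex, one has
\begin{align*}
	\bd(K\cup L) &= (\bd\, K\setminus\inter L)\cup(\bd\, L\setminus\inter K),\\
	\bd(K\cap L) &= (\bd\, K\cap L)\cup(\bd\, L\cap K),
\end{align*}
with overlaps contained in the common boundary $\bd\, K\cap\bd\, L$. Away from $\bd\, K\cap\bd\, L$, the sets $K\cup L$ and $K\cap L$ locally coincide with either $K$ or $L$, so their Gauss-Kronecker curvatures agree pointwise with those of the appropriate body. Writing $\bd\, K=(\bd\, K\setminus\inter L)\cup(\bd\, K\cap L)$ and symmetrically for $\bd\, L$, and verifying that the contributions along $\bd\, K\cap\bd\, L$ either have zero $(n-1)$-measure at generic points or match from each decomposition, integration of $H_{n-1}^\lambda(\cdot, x)^{1/(n+1)}$ against $d\vol_{\bd}^\lambda$ yields $\Omega^\lambda(K)+\Omega^\lambda(L)=\Omega^\lambda(K\cup L)+\Omega^\lambda(K\cap L)$.

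For part (a), I would use the limit representation
\[
	c_n\,\Omega^\lambda(K)=\lim_{\delta\to 0^+}\frac{\vol_n^\lambda(K)-\vol_n^\lambda(\F_\delta^\lambda\, K)}{\delta}.
\]
The key step is to show that this quotient is monotone non-decreasing in $\delta$, equivalently that $\delta\mapsto \vol_n^\lambda(\F_\delta^\lambda\, K)$ is concave. Granted this, $c_n\,\Omega^\lambda(K)$ equals the infimum over $\delta>0$ of the quotient; since for each fixed $\delta>0$ both $K\mapsto \vol_n^\lambda(K)$ and $K\mapsto \vol_n^\lambda(\F_\delta^\lambda\, K)$ are continuous on $\mathcal{K}_0(\Sp^n(\lambda))$ with respect to the Hausdorff metric, $\Omega^\lambda$ is an infimum of continuous functionals and is therefore upper semi-continuous. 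Establishing the concavity is the main obstacle: in Euclidean space it follows from Brunn--Minkowski-type arguments applied to the caps cut off by the defining hyperplanes, and to transfer it to $\Sp^n(\lambda)$ I would appeal to the projective Euclidean model of Section~3, in which $\F_\delta^\lambda\, K$ corresponds to a weighted Euclidean floating body studied in Section~2, and derive the required monotonicity from the flat, weighted statement there.
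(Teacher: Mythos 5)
Your treatment of part (c) coincides with the paper's.

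For part (b), you use a boundary decomposition in the same spirit as the paper, but you leave the essential step vague: the contributions of $K$, $L$, $K\cup L$ and $K\cap L$ over the common boundary $\bd\, K\cap\bd\, L$ are not negligible (that set can have positive $(n-1)$-measure, e.g.\ when $K,L$ share a flat piece of boundary), and they do not simply ``match'' in a tautological way. The paper's argument rests on a precise pointwise identity at normal points $x\in\bd\, K\cap\bd\, L$, namely
\[
H_{n-1}^\lambda(K\cup L,x) = \min\bigl\{H_{n-1}^\lambda(K,x),H_{n-1}^\lambda(L,x)\bigr\},
\qquad
H_{n-1}^\lambda(K\cap L,x) = \max\bigl\{H_{n-1}^\lambda(K,x),H_{n-1}^\lambda(L,x)\bigr\},
\]
which is transferred from Sch\"utt's Euclidean lemma \cite{Schuett:1993}*{Lem.~5} via the curvature comparison formula (\ref{eqn:lgauss}). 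Without naming and justifying such an identity, the ``verify the contributions match'' step is a gap; you should spell out why $H_K^{1/(n+1)}+H_L^{1/(n+1)}=H_{K\cup L}^{1/(n+1)}+H_{K\cap L}^{1/(n+1)}$ a.e.\ on the common boundary.

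For part (a), your route is genuinely different from the paper's and, as proposed, has an unresolved gap. You want $\delta\mapsto\vol_n^\lambda(\F_\delta^\lambda K)$ to be concave (equivalently the difference quotient to be monotone), so that $\Omega^\lambda$ becomes an infimum of continuous functionals. That concavity is not established in the proposal, is nontrivial even in the Euclidean case (the original proof of upper semicontinuity of affine surface area by Lutwak \cite{Lutwak:1991} does not proceed this way), and transferring it through the projective model would require additional work because the weight in the model is not affine. The paper avoids this entirely: using (\ref{eqn:lgauss}) and (\ref{eqn:lboundary}) it writes
\[
\Omega^\lambda(K,\omega)=\int\limits_{(\bd\, K)\cap\omega} H_{n-1}^e(K,x)^{\frac{1}{n+1}}\, f^\lambda(x)\, d\vol_{\bd\, K}^e(x),\qquad f^\lambda(x)=(1+\lambda\|x\|^2)^{-\frac{n-1}{2}},
\]
with $f^\lambda$ continuous, and then deduces upper semicontinuity from the \emph{known} upper semicontinuity of the Euclidean affine surface area $\Omega^0(\cdot,\omega)$ by a finite-partition argument that approximates $f^\lambda$ by constants. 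Replacing your concavity scheme with this reduction is the cleanest repair; otherwise you would need to supply a full proof of the claimed monotonicity.
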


All the properties in Theorem \ref{thm:intro_main2} are well known for the affine surface area, that is, 
$\lambda=0$, see e.g.\ \cites{Schuett:1994,Ludwig:2001,Lutwak:1991,Leichtweiss:1986}. Also, in the spherical 
case, $\lambda=1$, we were able to establish similar results \cite{Besau:2015a}.

Finally in Subsection 5.2 we briefly consider an isoperimetric inequality for the floating area.

\section{The Weighted Floating Body}

In this section we recall the notion of weighted floating bodies introduced in \cite{Werner:2002}. It will 
serve as a unifying framework for dealing with Euclidean, spherical and hyperbolic floating bodies.
In the following we also recall facts from Euclidean convex geometry. For a general reference we refer to 
\cites{Gruber:2007,Gardner:2006,Schneider:2014}. The final goal of this section is to establish Lemma 
\ref{lem:euclapprox}, which is a crucial step in the proof of our main Theorem \ref{thm:intro_main} in Section 
4.

We denote the Euclidean volume by $\vol_n^e$.
If a $\sigma$-finite Borel measure $\mu$ is absolutely continuous to another $\sigma$-finite Borel measure 
$\nu$ on an open set $D\subseteq \R^n$, then we write $\mu \ll_D \nu$. The measure $\mu$ is equivalent to 
$\nu$ on $D$, $\mu\sim_D \nu$, if and only if $\mu\ll_D \nu$ and $\nu\ll_D \mu$.
Evidently, by the Radon--Nikodym Theorem, for a $\sigma$-finite Borel measure $\mu$ we have that $\mu \sim_D 
\vol_n^e$ if and only if there is Borel function $f_\mu\colon D\to \R$ such that $d\mu(x) = f_\mu(x)dx$ and 
$\vol_n^e(\{f_\mu=0\}) = 0$. For a convex body $K\in\K_0(\R^n)$ we consider $\sigma$-finite measures $\mu$ 
such that $\mu \sim_{\inter\, K} \vol_n^e$, where $\inter\, K$ denotes the interior of $K$. Thus, without loss 
of generality, we may assume $\mu$ to be a $\sigma$-finite Borel measure on $\R^n$ with support $K$ and for 
any measurable set $A$ we have
\begin{align*}
	\mu(A) = \int\limits_{A\, \cap\, \inter\, K} f_\mu(x) \, d\vol_n^e(x).
\end{align*}

\begin{definition}[Weighted Floating Body \cite{Werner:2002}]
	Let $K\in\K_0(\R^n)$ and let $\mu$ be a finite non-negative Borel measure on $\inter\, K$ such that $\mu 
\sim_{\inter\, K} \vol_n^e$. For $\delta>0$, we define the \emph{weighted floating body} $\F^\mu_{\delta}\, 
K$, by
	\begin{align*}
		\F^{\mu}_{\delta}\, K = \bigcap \left\{ H^- : \mu(H^+\cap K) \leq \delta^{\frac{n+1}{2}}\right\},
	\end{align*}
	where $H^\pm$ are the closed half-spaces bounded by the hyperplane $H$.
\end{definition}

We will see that the weighted floating body exists (i.e.\ is non-empty) if $\delta$ is small enough. Since it 
is an intersection of closed half-spaces, it is a convex body contained in $K$.

\begin{example}
	For $\mu=\vol_n^e$ we retrieve the Euclidean floating body, denoted by $\F_{\delta}^e\,K$. In the 
literature different normalizations appear. For instance, in \cite{Schuett:1990} the convex floating body is 
defined as $$K_{t} = \bigcap \{ H^-:\vol_n^e(H^+\cap K)\leq t\},$$ which is equivalent to our notion since 
	\begin{align*}
		\F^e_{\delta} \, K = K_{\delta^{(n+1)/2}}.
	\end{align*}
\end{example}

We denote by $\cdot$ the Euclidean scalar product and by $\|.\|$ the Euclidean norm in $\R^n$.
A convex body is uniquely determined by its \emph{support function} $h_K$ defined by
\begin{align*}
	h_K(x) = \max\{ x\cdot y: y\in K\},\quad x\in\R^n.
\end{align*}
The geometric interpretation of the support function is the following:
For a fixed point $x\in\R^n$ and a normal direction $v\in\S^{n-1}$, we denote the hyperplane parallel to the 
hyperplane through $x$ with normal $v$ at distance $\alpha\in\R$ by $H_{x,v,\alpha}$, i.e., 
\begin{align*}
	H_{x,v,\alpha}=\left\{ y\in\R^n : y\cdot v = \alpha + x\cdot v \right\} = H_{0,v,\alpha+x\cdot v}.
\end{align*}
For a given direction $v\in\S^{n-1}$, the support function $h_K(v)$ measures the distance of a supporting 
hyperplane in direction $v$ to the origin. That is, $H_{0,v,h_K(v)}$ is a supporting hyperplane of $K$ in 
direction $v$ and $K$ is given by
\begin{align}\label{eqn:wulffsupport}
	K = \bigcap_{v\in\S^{n-1}} H_{0,v,h_K(v)}^-,
\end{align}
where $H_{x,v,\alpha}^- = \{y\in\R^n: y\cdot v \leq \alpha+x\cdot v\}$.

A closed Euclidean ball of radius $r$ and center $x\in\R^n$ is denoted by $B^n_e(x,r)$. For $K\in\K(\R^n)$ the 
set of points of distance $r$ from $K$ is $B^n_e(K,r)$.
For $K,L\in \K(\R^n)$, the \emph{Hausdorff distance} $\delta^e$ is defined by 
\begin{align*}
	\delta^e(K,L) = \inf \left\{ r\geq 0 : K\subseteq B^n_e(L,r) \text { and } L\subseteq B^n_e(K,r)\right \}.
\end{align*}
Equivalently, we have that
\begin{align*}
	\delta^e(K,L) = \sup_{v\in\S^{n-1}} \left\| h_K(v)-h_L(v)\right\|.
\end{align*}

Given a continuous function $f\colon \S^{n-1}\to \R$ the \emph{Wulff shape} $[f]$ (also called Aleksandrov 
body, see \cite{Gardner:2002}*{Sec.\ 6}) of $f$ is, unless it is the empty set, the convex body defined by
\begin{align}\label{eqn:defwulff}
	[f] = \bigcap_{v\in\S^{n-1}} H_{0,v,f(v)}^-.
\end{align}
For a positive continuous function $f$ the Wulff shape is a convex body containing the origin in its interior. 
For a convex body $K$ we have $K=[h_K]$, i.e., the Wulff shape associated with 
$h_K$ is $K$ itself. The concept of Wulff shapes has many applications, see e.g.\ \cite{Schneider:2014}*{Sec.\ 
7.5} for a short exposition.

The weighted floating body is a Wulff shape.
\begin{proposition}\label{prop:euclpar}
	Let $K\in\K_0(\R^n)$, $\mu$ be a finite non-negative Borel measure on $\inter \, K$ such that 
$\mu\sim_{\inter\, K} \vol_n^e$ and $\delta\in \left(0,\mu(K)^{\frac{2}{n+1}}\right)$. For $v\in\S^{n-1}$, 
there exists a unique $s_{\delta}(v)\in \R$ determined by
	\begin{align*}
		\mu\left(K\cap H^+_{0,v,h_K(v)-s_{\delta}(v)}\right) = \delta^{\frac{n+1}{2}}.
	\end{align*}
	In particular, $s_{\delta}(v)=s(\delta,v)$ is continuous on $\left(0,\mu(K)^{\frac{2}{n+1}}\right)\times 
\S^{n-1}$ and strictly increasing in $\delta$.
	Moreover, the weighted floating body $\F_{\delta}^\mu\, K$ exists if and only if the Wulff shape 
$[h_K-s_{\delta}]$ exists and in this case we have that
	\begin{align}\label{eqn:parfloat}
		\F^\mu_{\delta}\, K = [h_K-s_{\delta}].
	\end{align}
\end{proposition}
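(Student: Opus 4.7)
The plan is in three stages: establish pointwise existence and uniqueness of $s_\delta(v)$, upgrade this to joint continuity and monotonicity of $s(\delta,v)$, and then identify $\F^\mu_\delta\, K$ with the Wulff shape $[h_K-s_\delta]$ by comparing the half-space intersections that define each.

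For the first stage, I would fix $v\in\S^{n-1}$, set $w_v=h_K(v)+h_K(-v)$ (the width of $K$ in direction $v$), and study $g_v\colon[0,w_v]\to[0,\mu(K)]$ given by $g_v(s)=\mu(K\cap H^+_{0,v,h_K(v)-s})$. Since $H_{0,v,h_K(v)}$ is a supporting hyperplane and $\mu\ll_{\inter K}\vol_n^e$, one has $g_v(0)=0$; at the other extreme $K\subseteq H^+_{0,v,h_K(v)-w_v}$, so $g_v(w_v)=\mu(K)$. The hypothesis $\mu\sim_{\inter K}\vol_n^e$ is used twice. For $0\leq s<s'\leq w_v$, the open slab $\inter K\cap\{y: h_K(v)-s'<y\cdot v<h_K(v)-s\}$ is nonempty, hence has positive Lebesgue measure and, by equivalence, positive $\mu$-measure; this yields strict monotonicity of $g_v$. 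Each individual hyperplane is Lebesgue-null and therefore $\mu$-null, so dominated convergence (with dominator $\mathbf{1}_K f_\mu\in L^1(\vol_n^e)$) yields continuity of $g_v$. The intermediate value theorem then produces, for each $\delta\in(0,\mu(K)^{2/(n+1)})$, a unique $s_\delta(v)\in(0,w_v)$ with $g_v(s_\delta(v))=\delta^{(n+1)/2}$.

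For the second stage, strict monotonicity of $s(\delta,v)$ in $\delta$ is immediate from the strict monotonicity of $g_v$. For joint continuity, I would first show that $(v,s)\mapsto g_v(s)$ is jointly continuous on $\S^{n-1}\times\R$: if $(v_k,s_k)\to(v,s)$, continuity of $h_K$ and the fact that the limit hyperplane is $\mu$-null give $\mathbf{1}_{H^+_{0,v_k,h_K(v_k)-s_k}}\to\mathbf{1}_{H^+_{0,v,h_K(v)-s}}$ $\mu$-a.e.\ on $K$, and dominated convergence yields $g_{v_k}(s_k)\to g_v(s)$. Joint continuity of $s(\delta,v)$ then follows from the standard implicit-solution argument for continuous, strictly monotone one-parameter families.

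Finally, for the identification, \eqref{eqn:defwulff} reads $[h_K-s_\delta]=\bigcap_{v\in\S^{n-1}}H^-_{0,v,h_K(v)-s_\delta(v)}$. Each hyperplane $H_{0,v,h_K(v)-s_\delta(v)}$ cuts off a cap of $\mu$-measure exactly $\delta^{(n+1)/2}$ from $K$, and is therefore among the half-spaces defining $\F^\mu_\delta\, K$; this gives $\F^\mu_\delta\, K\subseteq[h_K-s_\delta]$. Conversely, any hyperplane $H$ with $\mu(K\cap H^+)\leq\delta^{(n+1)/2}$ may be written $H=H_{0,v,\alpha}$ with $v\in\S^{n-1}$ the outer unit normal of $H^+$; strict monotonicity of $g_v$ then forces $h_K(v)-\alpha\leq s_\delta(v)$, so $H^-_{0,v,h_K(v)-s_\delta(v)}\subseteq H^-$, and intersecting over all admissible $H$ yields $[h_K-s_\delta]\subseteq\F^\mu_\delta\, K$. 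The two sets therefore coincide and are simultaneously empty or nonempty, proving the existence equivalence. I expect the joint continuity in the second stage to be the main obstacle: it is the only place where the absolute continuity $\mu\ll\vol_n^e$ is genuinely used, and some care is needed to apply dominated convergence as both $v$ and $s$ vary along the sequence.
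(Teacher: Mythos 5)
Your proof is correct and follows the paper's strategy closely: study the cap-measure $g_v(s)=\mu(K\cap H^+_{0,v,h_K(v)-s})$ (the paper's $G(v,\Delta)$), establish strict monotonicity in $s$ and joint continuity in $(v,s)$, invoke the intermediate value theorem to produce $s_\delta(v)$, and then identify $\F^\mu_\delta K$ with $[h_K-s_\delta]$ by showing that, for each $v$, the largest $t$ with $g_v(t)\le\delta^{(n+1)/2}$ is $t=s_\delta(v)$. The one place where your argument diverges is the continuity of the cap-measure: you argue directly by dominated convergence, noting that the indicators $\mathbf{1}_{K\cap H^+_{0,v_k,h_K(v_k)-s_k}}$ converge $\mu$-a.e.\ (the limit hyperplane is Lebesgue-null, hence $\mu$-null) and are dominated by $\mathbf{1}_K f_\mu\in L^1(\vol_n^e)$; the paper instead observes that the cap $K(v,\Delta)=K\cap H^+_{0,v,h_K(v)-\Delta}$ varies continuously in the Hausdorff metric and then cites continuity of volume on $\K_0(\R^n)$ together with $\mu\sim_{\inter K}\vol_n^e$. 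The two routes rest on the same underlying fact — that the $\mu$-measure of a set goes to zero as its Lebesgue measure does, by absolute continuity of the integral — but your DCT version makes this explicit and is slightly more self-contained, while the paper's version reuses Hausdorff-convergence machinery from Schneider's book that it deploys elsewhere. Both are sound, and the rest of your argument (the two-inclusion identification of the Wulff shape, including the careful treatment of the degenerate cases $\alpha>h_K(v)$ and $\alpha<-h_K(-v)$ via strict monotonicity) matches the paper's reasoning.
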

\begin{proof}
	We consider $G\colon \S^{n-1}\times \R \to [0,\mu(K)]$ defined by
	\begin{align*}
		G(v,\Delta) = \mu\left(K\cap H^+_{0,v,h_K(v)-\Delta}\right).
	\end{align*}
	Since $\mu$ is a non-negative Borel measure equivalent to $\vol_n$, we can find a Borel function 
$f_\mu\colon \R^n \to [0,\infty)$ such that $f_\mu>0$ almost everywhere on $\inter\, K$ and $f_\mu=0$ else. We 
can therefore write 
	\begin{align*}
		G(v,\Delta) = \int\limits_{H^+_{0,v,h_K(v)-\Delta}} f_\mu(x)\, dx = 
\int\limits_{h_K(v)-\Delta}^{h_K(v)} \int\limits_{v^\bot} f_\mu(w+tv)\, dw\, dt.
	\end{align*}
	For the second equality we used Fubini's theorem and the substitution $x=w+tv$, where $w\in 
v^\bot=\{y\in\R^n : y\cdot v = 0\}$ and $t\in\R$ are uniquely determined by $x$.
	Thus $G$ is strictly increasing in $\Delta$ for $\Delta\in \left(0,h_K(v)+h_K(-v)\right)$ from $0$ to 
$\mu(K)$.
	To see that $G$ is continuous, first note that $K(v,\Delta):=K\cap H_{0,v,h_K(v)-\Delta}^+$ depends 
continuously on $(v,\Delta)\in\S^{n-1} \times \R$ with respect to the Hausdorff distance $\delta^e$. This 
follows, since $h_K$ is continuous and the map $(v,\lambda)\to H^+_{0,v,\lambda}\cap K$ is continuous in 
$v\in\S^{n-1}$ and $\lambda\in \R$. Now, since $\vol_n$ is continuous on $\K_0(\R^n)$, see 
\cite{Schneider:2014}*{Thm.\ 1.8.20}, and since $\mu\sim_{\inter\, K}\vol_n$, we conclude that $\mu$ is 
continuous on $\K_0(\R^n)\cap K$ and therefore $\mu(K(v,\Delta))=G(v,\Delta)$ is continuous in $v$.
	
	Hence, for $\delta\in \left(0,\mu(K)^{\frac{2}{n+1}}\right)$ there is a unique $s_{\delta}(v)\in 
(0,h_K(v)+h_K(-v))$ such that
	\begin{align*}
		\delta^{\frac{n+1}{2}} = G(v,s_{\delta}(v)),
	\end{align*}
	which is strictly increasing in $\delta$ and continuous.
	
	To prove $(\ref{eqn:parfloat})$, we first consider a fixed $v\in\S^{n-1}$. For $t_1<t_2$, we have that 
$H^+_{0,v,h_K(v)-t_1} \subseteq H^+_{0,v,h_K(v)-t_2}$. The maximal $t$ such that $G(v,t)\leq 
\delta^{\frac{n+1}{2}}$ is $t=s_{\delta}(v)$. Hence, we have that
	\begin{align*}
		\bigcap \left\{ H^-_{0,v,h_K(v)-t} : t\in\R\text{ such that } G(v,t)\leq \delta^{\frac{n+1}{2}}\right 
\} = H^-_{0,v,h_K(v)-s_{\delta}(v)}.
	\end{align*}
	Finally, we conclude that
	\begin{align*}
		\F^{\mu}_{\delta}\, K &= \bigcap \left\{ H^-_{0,v,h_K(v)-t}:v\in\S^{n-1}, t\in\R \text{ such that } 
G(v,t)\leq \delta^{\frac{n+1}{2}}\right\}\\
		&= \bigcap_{v\in\S^{n-1}} H^-_{0,v,h_K(v)-s_{\delta}(v)} = [h_K-s_{\delta}].\hfill \qedhere
	\end{align*}
\end{proof}

For a convex body $K\in\mathcal{K}_0(\R^n)$ and a boundary point $x\in\bd\, K$ we define the set of normal 
vectors $\sigma(K,x)$ of $K$ in $x$, also called the \emph{spherical image} of $K$ at $x$ (see 
\cite{Schneider:2014}*{p.\ 88}), by
\begin{align*}
	\sigma(K,x)=\{v\in\S^{n-1}: H_{x,v,0} \text{ is a supporting hyperplane to $K$ in $x$}\}.
\end{align*}

A boundary point $x$ is called \emph{regular} if $\sigma(K,x)$ is a single point, that is, $K$ has a unique 
outer unit normal vector $N_x$ at $x$. Note that for a convex body almost all boundary points are regular (see 
\cite{Schneider:2014}*{Thm.\ 2.2.5}). A boundary point $x$ is \emph{exposed} if and only if there is a support 
hyperplane $H$ such that $K\cap H = \{x\}$.

A subset $S\subseteq \S^{n-1}$ is a \emph{spherical convex body} if and only if the positive hull 
$\mathrm{pos}\, S=\{\lambda s: \lambda\geq 0, s\in S\}$ is a closed convex cone in $\R^n$. The spherical image 
at a boundary point $x$ is a spherical convex body and the closed convex cone generate by it is the 
\emph{normal cone} $N(K,x)=\mathrm{pos}\, \sigma(K,x)$.
If $K$ has non-empty interior, then $\sigma(K,x)$ is proper for any boundary point, that is, the normal cone 
does not contain any linear subspace. 

The \emph{spherical Hausdorff distance} $\delta^s$ is a metric on spherical convex bodies induced by the 
spherical distance 
\begin{align}\label{def:sdis}
	d_s(x,y)=\arccos(x\cdot y)
\end{align}
on $\S^{n-1}$ in the following way: For a subset $A\subset\S^{n-1}$ we denote by $A_\varepsilon$ the 
$\varepsilon$-neighborhood of $A$, i.e., $A_\varepsilon = \{a\in\S^{n-1}: d_s(a,A)<\varepsilon\}$. Then, for 
spherical convex bodies $S$ and $T$, we have that
\begin{align*}
	\delta_s(S,T) = \inf\{ \varepsilon\geq 0 : S\subseteq T_{\varepsilon} \text{ and } S\subseteq 
T_{\varepsilon}\}.
\end{align*}

The spherical Hausdorff distance induces a metric on the closed convex cones with apex at the origin via the 
positive hull $\mathrm{pos}$.
Hence, we say that a sequence of closed convex cones $C_i$ converges to a closed convex cone $C$ if and only 
if the sequence 
of spherical convex bodies $C_i\cap \S^{n-1}$ converges to $C\cap \S^{n-1}$ with respect to $\delta^s$. 

Fix $z\in\S^{n-1}$ and let $(C_i)_{i=1}^{\infty}$ be a sequence of closed convex cones contained in the open 
half-space $\inter\,H^+_{0,z,0}$. Then $C_i$ converges to a closed convex cone $C$ contained in the same 
open-half space with respect to $\delta^s$ if and only if the sections of the convex cones with the affine 
hyperplane $H_{0,z,1}$ converge with respect to the Euclidean Hausdorff metric $\delta^e$ in $H_{0,z,1}$. 

We define the \emph{gnomonic projection} $g_z\colon \inter\, H^+_{0,z,0}\to z^\bot\cong \R^{n-1}$ by
\begin{align}\label{def:gnomonic}
	g_z(x) = (x\cdot z)^{-1} x - z.
\end{align}
Then $g_z$ maps closed convex cones contained in the open half-space $\inter\, H^+_{0,z,0}$ to convex bodies 
in $z^\bot\cong\R^{n-1}$. By the previous statement we find, that the gnomonic projection induces an 
homeomorpism between the space of closed convex cones in $\inter\, H^+_{0,z,0}$ with respect to the $\delta^s$ 
and the space of convex bodies in $z^\bot\cong \R^{n-1}$ with respect to $\delta^e$. Compare also 
\cite{Besau:2015}*{Cor.\ 4.5}.

\bigskip
By Proposition \ref{prop:euclpar}, $s_{\delta}(v)=s(\delta,v)$ is continuous as a function in $(\delta,v)$. It 
converges point-wise to $0$ as $\delta\to 0^+$. By the compactness of $\S^{n-1}$, we have that 
$s_{\delta}(.)$ converges uniformly to $0$ as $\delta\to 0^+$.
This implies the convergence of $[h_K-s_{\delta}]$ to $[h_K]=K$, see e.g.\ \cite{Schneider:2014}*{Lem.\ 
7.5.2}. We conclude
\begin{align}\label{eqn:conv_float}
	\lim_{\delta\to 0^+} \F_{\delta}^\mu\, K = K.
\end{align}

Our next goal is to show that the convergence of the weighted floating body is locally determined. This fact 
and therefore most of the following lemmas are probably known for the most part. However, since we were only 
able to find references in particular cases, for instance see e.g.\ \cite{Schuett:2003} for related results, 
and also for the convenience of the reader, we include proofs for the following.

We consider a regular boundary point $x\in\bd\, K$ and investigate the behavior of $\F_\delta^\mu\, K$ near 
$x$ for $\delta\to 0^+$. The shape of $\F_\delta^\mu\, K$ near $x$ is determined by a neighborhood of 
directions of the unique normal $N_x$ of $K$ at $x$. For $s<t$, we have that $\F_s^\mu\, K\supseteq \F_t^\mu\, 
K$. In particular, if $0\in\inter\, \F_t^\mu\, K$, then for all $\delta\in (0,t)$ we have $0\in\inter\, 
\F_\delta^\mu\, K$. In this case we define $x_\delta^K$ as the unique intersection point of 
$\bd\,\F_\delta^\mu\, K$ with the ray $\mathrm{pos}\,\{x\}$. Hence, $\lim_{\delta\to 0^+} x_\delta^K = x$. 
We use $x_\delta$ to control the limit process $\F_\delta^\mu\, K \to K$ near $x$ as 
$\delta\to 0^+$.

The first step is to consider a convergent sequence of Wulff shapes $[f_i]\to[f]$, where 
$(f_i)_{i\in\mathbb{N}}$ and $f$ are positive continuous functions on $\S^{n-1}$. Thus $0\in\inter\, [f_i]$ 
and $0\in\inter\,[f]$. We show that, for any regular boundary point $x\in\bd\, [f]$ and any neighborhood of 
directions around the normal $N_x$ of $[f]$ at $x$, there is $i_0\in\mathbb{N}$ such that, for all $i>i_0$, 
$x_i:=\bd\,[f_i]\cap\mathrm{pos}\{x\}$ is determined by the values of $f_i$ in that neighborhood.

\begin{lemma}[Local dependence of a convergent sequence of Wulff shapes]\label{lem:locality}
	Let $f_i\colon\S^{n-1}\to (0,\infty)$, $i\in\mathbb{N}$, be a sequence of positive continuous function 
uniformly convergent to $f\colon\S^{n-1}\to (0,\infty)$. Then for $x\in\mathrm{reg}\, [f]$ and $\varepsilon>0$ 
there exists $i_0\in\mathbb{N}$ such that, for all $i>i_0$, we have that
	\begin{align*}
		[f_i]\cap \mathrm{pos}\{x\} = \bigcap \left\{ H^-_{0,v,f_i(v)}: d_s(v,N_x) < 
\varepsilon\right\}\cap\mathrm{pos}\{x\},
	\end{align*}
	where $N_x$ is the unique outer unit normal of $[f]$ at $x$.
\end{lemma}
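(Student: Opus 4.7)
The plan is to exploit the regularity of $x$ to obtain a strict quantitative separation of all far-away normal directions from $x$, and then to transport this gap under the uniform perturbation from $f$ to $f_i$ and the small displacement from $x$ to the boundary point $x_i:=\bd\,[f_i]\cap\mathrm{pos}\{x\}$. Once the separation is shown to persist, no constraint coming from directions outside the $\varepsilon$-neighborhood of $N_x$ can be active at $x_i$ for large $i$, which will force the claimed locality.

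First I would use that $x\in\mathrm{reg}\,[f]$ implies $\sigma([f],x)=\{N_x\}$, so that $f(v)-x\cdot v>0$ for every $v\in\S^{n-1}\setminus\{N_x\}$ (using $x\cdot v\leq h_{[f]}(v)\leq f(v)$ with equality in the first inequality only for $v=N_x$). Since the set $S_\varepsilon:=\{v\in\S^{n-1}:d_s(v,N_x)\geq\varepsilon\}$ is compact and disjoint from $\{N_x\}$, the continuous function $v\mapsto f(v)-x\cdot v$ attains a positive minimum $\eta>0$ on $S_\varepsilon$. Next, uniform convergence provides $i_1$ with $\|f_i-f\|_\infty<\eta/4$ for $i\geq i_1$. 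Since $f>0$ on the compact sphere, $0\in\inter\,[f]$, and the standard Wulff-shape convergence (see \cite{Schneider:2014}*{Lem.\ 7.5.2}) gives $[f_i]\to[f]$ in Hausdorff distance, hence $0\in\inter\,[f_i]$ for all large $i$ and the ray $\mathrm{pos}\{x\}$ meets $\bd\,[f_i]$ in a unique point $x_i=t_ix$ with $t_i\to 1$.

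Choosing $i_0$ so that also $|t_i-1|<\eta/(2\|x\|)$ for $i\geq i_0$, I would estimate, for every $v\in S_\varepsilon$,
\begin{align*}
f_i(v)-t_i\,x\cdot v=\bigl(f_i(v)-f(v)\bigr)+\bigl(f(v)-x\cdot v\bigr)-(t_i-1)\,x\cdot v\geq -\tfrac{\eta}{4}+\eta-\tfrac{\eta}{2}=\tfrac{\eta}{4}>0,
\end{align*}
so every constraint coming from $S_\varepsilon$ is strictly slack at $x_i$. The inclusion $\subseteq$ in the lemma is trivial; for the reverse inclusion along the ray I would argue by contradiction. If some $sx$ with $s>t_i$ lay in the right-hand side, then for each $t\in(t_i,s]$ there would exist $v_t\in S_\varepsilon$ with $tx\cdot v_t>f_i(v_t)$. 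Compactness of $S_\varepsilon$ yields a subsequential limit $v^*\in S_\varepsilon$ as $t\to t_i^+$, and continuity of $f_i$ delivers $t_ix\cdot v^*\geq f_i(v^*)$, contradicting the strict slack $\eta/4$ established above.

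The hard part will be extracting the uniform gap $\eta$ from the merely pointwise regularity of $x$; once this quantitative separation is in hand, the rest reduces to a controlled perturbation bookkeeping between $\|f_i-f\|_\infty$ and $|t_i-1|$. The most delicate moment is the subsequential limit producing $v^*\in S_\varepsilon$, where the compactness of $S_\varepsilon$ is essential to keep the limiting direction bounded away from $N_x$.
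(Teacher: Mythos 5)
Your proof is correct, and it takes a genuinely different route from the paper's. The paper argues by contradiction through a chain of auxiliary results: it observes that if the claim failed, there would be a sequence of supporting directions $z_i\in\sigma([f_i],x_i)$ bounded away from $N_x$, and then invokes the convergence $\sigma([f_i],x_i)\to\{N_x\}$, which is established separately (Lemma \ref{lem:norconecon}) via the continuity of the polar map and the convergence of exposed faces. Your approach instead extracts a quantitative slack directly from the definition of regularity: $f(v)-x\cdot v$ is strictly positive off $N_x$, compactness of $S_\varepsilon=\{d_s(v,N_x)\ge\varepsilon\}$ produces a uniform gap $\eta>0$, and the uniform convergence $f_i\to f$ together with the radial-function convergence $t_i\to1$ (a standard consequence of $[f_i]\to[f]$ with $0$ interior) transports that gap to $x_i$. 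The final compactness/continuity step correctly rules out any point beyond $t_i x$ surviving the restricted intersection; one could even streamline it by noting that the uniform slack $\eta/4$ at $x_i$ directly gives $tx\in\bigcap_{v\in S_\varepsilon}H^-_{0,v,f_i(v)}$ for $t<t_i+\eta/(4\|x\|)$, forcing $tx\in[f_i]$ whenever $tx$ lies in the right-hand side, contradicting $t>t_i$. What each approach buys: the paper's proof is modular and makes the convergence of normal cones available as a separate tool (it is reused implicitly in the structure of the section), while yours is more elementary and self-contained — it avoids polar bodies, exposed faces, and the gnomonic-projection machinery entirely, and as a bonus yields an explicit threshold on $\|f_i-f\|_\infty$ and $|t_i-1|$ sufficient for the locality to hold.
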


Since the weighted floating body can be viewed as a Wulff shape and converges to $K$ as $\delta\to 0^+$ we 
obtain the following corollary.
\begin{corollary}[Locality of the weighted floating body]\label{cor:locality}
	Let $K\in\K_0(\R^n)$ be such that $0\in\inter\, K$. Then for $x\in\mathrm{reg}\, K$ and $\varepsilon>0$ 
there exists $\delta_\varepsilon>0$ such that for all $\delta <\delta_\varepsilon$, we have $0\in\inter\, 
\F^\mu_\delta\, K$ and 
	\begin{align}\notag
		\mathrm{conv}(x_\delta^K,0) &= \F_\delta^\mu \, K \cap\mathrm{pos}\,\{x\}\\\label{eqn:locality}
		&= \bigcap\left\{ H^-_{0,v,h_K(v)-s_\delta(v)}: d_s(v,N_x)< \varepsilon\right\} \cap 
\mathrm{pos}\,\{x\}.
	\end{align}
	where $s_\delta(v)$ is uniquely determined by
	\begin{align*}
		\delta^{\frac{n+1}{2}} = \mu\left(K\cap H^+_{0,v,h_K(v)-s_\delta(v)}\right).
	\end{align*}
\end{corollary}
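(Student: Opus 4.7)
The plan is to obtain this corollary by translating Lemma \ref{lem:locality} into the Wulff-shape presentation of the floating body supplied by Proposition \ref{prop:euclpar}, namely $\F^\mu_\delta\, K = [h_K - s_\delta]$, and then reading off the segment description from convexity.

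First I would verify that $0 \in \inter\, \F^\mu_\delta\, K$ for all $\delta$ smaller than some threshold $\delta_0$. Since $0\in\inter\, K$, pick $r>0$ with $h_K\geq r$ on $\S^{n-1}$. From Proposition \ref{prop:euclpar} the function $s_\delta(v)$ is jointly continuous and tends pointwise to $0$ as $\delta\to 0^+$; compactness of $\S^{n-1}$ upgrades this to uniform convergence. Hence for $\delta<\delta_0$ sufficiently small, $h_K - s_\delta$ is a strictly positive continuous function, so its Wulff shape $[h_K-s_\delta]=\F^\mu_\delta\, K$ contains $0$ in its interior and $x^K_\delta$ is well defined.

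Next I would invoke Lemma \ref{lem:locality}. Since the lemma is phrased for sequences, I argue by contradiction: if no uniform $\delta_\varepsilon$ works, then some sequence $\delta_i\to 0^+$ violates the locality identity at every $i$. Set $f_i := h_K - s_{\delta_i}$ and $f:=h_K$. The uniform convergence $s_{\delta_i}\to 0$ gives $f_i\to f$ uniformly, with all $f_i$ positive for $i$ large. Since $x\in\mathrm{reg}\, K$ has unique outer normal $N_x$, Lemma \ref{lem:locality} produces an $i_0$ such that for every $i>i_0$
\begin{align*}
	\F^\mu_{\delta_i}\, K \cap \mathrm{pos}\{x\} = [f_i] \cap \mathrm{pos}\{x\} = \bigcap\left\{ H^-_{0,v,f_i(v)} : d_s(v,N_x)<\varepsilon\right\} \cap \mathrm{pos}\{x\},
\end{align*}
which is precisely the identity the sequence was assumed to violate, a contradiction.

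The remaining equality $\mathrm{conv}(x^K_\delta,0) = \F^\mu_\delta\, K \cap \mathrm{pos}\{x\}$ then follows directly from convexity: once $0\in\inter\, \F^\mu_\delta\, K$, the ray $\mathrm{pos}\{x\}$ leaves the bounded convex body $\F^\mu_\delta\, K$ at the unique boundary point $x^K_\delta$, so its intersection with the body is the closed segment from $0$ to $x^K_\delta$. The only mildly technical point is packaging the sequential statement of Lemma \ref{lem:locality} into a uniform threshold $\delta_\varepsilon$, which is handled by the contradiction argument; I expect no further obstacle beyond bookkeeping.
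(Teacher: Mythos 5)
Your proof is correct and follows the route the paper intends (the paper leaves the proof implicit, noting only that the corollary follows because the weighted floating body is a Wulff shape converging to $K$): you represent $\F^\mu_\delta\, K$ as $[h_K - s_\delta]$, use uniform convergence $s_\delta \to 0$ to get $0 \in \inter\, \F^\mu_\delta\, K$ for small $\delta$, and package Lemma \ref{lem:locality}'s sequential statement into a uniform threshold by contradiction. The concluding convexity observation (a ray from an interior point meets the boundary of a convex body exactly once, so the intersection with $\mathrm{pos}\{x\}$ is a segment) is exactly what makes $\mathrm{conv}(x^K_\delta,0)=\F^\mu_\delta\, K\cap\mathrm{pos}\{x\}$.
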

Before we prove Lemma \ref{lem:locality}, we recall some common notation.
For $u\in\S^{n-1}$,
	$F(K,u) = K\cap H_{0,u,h_K(u)}$ 
is the \emph{exposed face} of $K$ in direction $u$. 
The following is an easy observation.
\begin{lemma}[Convergence of exposed faces]\label{lem:facecon}
	Let $K_i\to K$ in $\K_0(\R^n)$ with respect to the Hausdorff distance $\delta^e$. If $u\in\S^{n-1}$ such 
that $F(K,u)=\{x\}$ is an exposed point of $K$, then $F(K_i,u)\to \{x\}$.
\end{lemma}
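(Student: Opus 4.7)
The plan is to use the standard fact that Hausdorff convergence $K_i \to K$ on $\K_0(\R^n)$ is equivalent to uniform convergence of support functions on $\S^{n-1}$, combined with a subsequence/compactness argument.

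First I would fix the notation: set $E_i := F(K_i,u) = K_i\cap H_{0,u,h_{K_i}(u)}$. Since each $K_i$ is a convex body and $u\in\S^{n-1}$, each $E_i$ is a non-empty compact convex set. To prove $E_i\to\{x\}$ in the Hausdorff metric, and since $\{x\}$ is a singleton, it suffices to show that $\sup_{y\in E_i}\|y-x\|\to 0$, i.e.\ that every selection $y_i\in E_i$ satisfies $y_i\to x$.

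Next I would argue by contradiction and a compactness extraction. Since $K_i\to K$ in $\delta^e$, the sequence $(K_i)$ is uniformly bounded, hence any choice $y_i\in E_i\subseteq K_i$ lies in a fixed compact set. If $y_i\not\to x$, then after passing to a subsequence we find $y_{i_k}\to y$ with $y\neq x$. The two ingredients I would then verify are:
\begin{itemize}
\item $y\in K$: this is immediate because $y_{i_k}\in K_{i_k}$ and Hausdorff convergence $K_{i_k}\to K$ is preserved under limits of point sequences.
\item $y\cdot u = h_K(u)$: since $y_{i_k}\in E_{i_k}$ we have $y_{i_k}\cdot u = h_{K_{i_k}}(u)$, and $h_{K_{i_k}}(u)\to h_K(u)$ by the equivalence of Hausdorff distance and uniform convergence of support functions (as recalled after \eqref{eqn:wulffsupport}).
\end{itemize}
Combining these two facts yields $y\in F(K,u)=\{x\}$, so $y=x$, contradicting $y\neq x$.

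The lemma is really a soft continuity statement, and I do not expect any serious obstacle; the only subtle point is that it genuinely requires $F(K,u)$ to be a single \emph{exposed} point. Without that hypothesis one can only conclude that every Hausdorff accumulation point of $(E_i)$ is contained in $F(K,u)$, which for a non-singleton exposed face does not pin down the limit (the faces $E_i$ can oscillate inside $F(K,u)$ or degenerate to proper subfaces). The exposedness of $\{x\}$ is exactly what forces the limit to collapse to the prescribed point.
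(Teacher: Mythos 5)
Your proposal is correct and follows essentially the same route as the paper: extract a convergent subsequence from an arbitrary selection $y_i\in F(K_i,u)$, show its limit lies in $K$ and satisfies $y\cdot u = h_K(u)$ (the paper phrases this via the set convergence $H_{0,u,h_{K_i}(u)}\cap B^n_e(0,R)\to H_{0,u,h_K(u)}\cap B^n_e(0,R)$, you phrase it via $h_{K_i}(u)\to h_K(u)$ — the same fact), and conclude $y\in F(K,u)=\{x\}$. Your closing remark on why exposedness is genuinely needed matches what the paper's argument implicitly relies on.
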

\begin{proof}
	Since $K_i$ converges to $K$ with respect to $\delta^e$, any sequence $x_i\in F(K_i,u)\subseteq K_i$ has a 
convergent subsequence with limit $y\in K$, see e.g.\ \cite{Schneider:2014}*{Thm.\ 1.8.7}. 
	Let $R>0$ be such that $K\cup\bigcup_{i\in\mathbb{N}}K_i\subseteq B^n_e(0,R)$. Then $x_i\in F(K_i,u) 
\subseteq H_{0,u,h(K_i,u)}\cap B^n_e(0,R)$ and also $H_{0,u,h(K_i,u)}\cap B^n_e(0,R) \to H_{0,u,h(K,u)}\cap 
B^n_e(0,R)$. Hence, for the limit point $y$ of the convergent subsequence, we also have $y\in H_{0,u,h(K,u)}$ 
and therefore $y\in K\cap H_{0,u,h(K,u)} = F(K,u) = \{x\}$.
\end{proof}

We denote the set of convex bodies with $0$ in the interior by $\K_{00}(\R^n)$.
For $K\in\K_{00}(\R^n)$, $K^\circ = \{y\in\R^n:x\cdot y \leq 1\}$ is the \emph{polar body} of $K$. 
For $x\in\bd\, K$, set $\widehat{x} = \{y\in K^\circ:x\cdot y = 1\}$. Then $N(K,x) = \mathrm{pos}\, 
\widehat{x}$, see \cite{Schneider:2014}*{Lem.\ 2.2.3}.
For $x\in\bd\, K$, we have $h_{K^\circ}\left(x/\|x\|\right) = 1/\|x\|$. Hence, $\widehat{x} = 
F\left(K^\circ,x/\|x\|\right)$, 
or equivalently
\begin{align}\label{eqn:normalcone}
	N(K,x) = \mathrm{pos}\, F\left(K^\circ, x / \|x\|\right).
\end{align}
For a proof of the following fact see, e.g., \cite{Groemer:1996}*{Lem.\ 2.3.2}.
\begin{lemma}[Continuity of the polar map]\label{lem:polarcont}
	Let $(K_i)_{i\in\mathbb{N}}$ be a sequence in $\K_{00}(\R^n)$ converging to $K\in K_{00}(\R^n)$ with respect 
to the Hausdorff distance $\delta^e$. Then also $K_i^\circ \to K^\circ$.
\end{lemma}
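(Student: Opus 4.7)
The plan is to exploit the polar duality identity $\rho_{L^\circ}(u)=1/h_L(u)$ valid for $u\in\S^{n-1}$ and $L\in\K_{00}(\R^n)$ (an immediate consequence of the definitions: $tu\in L^\circ$ iff $th_L(u)\le 1$), combined with the characterization of Hausdorff convergence by uniform convergence of support functions on $\S^{n-1}$ recalled earlier in the text. Since $K\in\K_{00}(\R^n)$, the function $h_K$ is continuous and strictly positive on the compact sphere, hence there exists $c>0$ with $h_K\ge 2c$ on $\S^{n-1}$. From $h_{K_i}\to h_K$ uniformly (equivalent to $\delta^e(K_i,K)\to 0$), it follows that $h_{K_i}\ge c$ on $\S^{n-1}$ for all sufficiently large $i$; in particular $0\in\inter\, K_i$, so $K_i^\circ$ is well defined.

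The elementary bound
\begin{align*}
	\left|\frac{1}{h_{K_i}(u)}-\frac{1}{h_K(u)}\right|\le \frac{1}{c^2}\,|h_{K_i}(u)-h_K(u)|,
\end{align*}
valid uniformly in $u\in\S^{n-1}$ once $h_{K_i},h_K\ge c$, transfers the uniform convergence of $h_{K_i}$ to uniform convergence of their reciprocals. Via the duality identity this reads $\rho_{K_i^\circ}\to \rho_{K^\circ}$ uniformly on $\S^{n-1}$, and the limit $\rho_{K^\circ}=1/h_K$ is uniformly bounded below by a positive constant.

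It remains to deduce Hausdorff convergence $K_i^\circ\to K^\circ$ from this uniform convergence of radial functions. This is a routine star-shapedness check: if $\sup_{u\in\S^{n-1}}|\rho_{K_i^\circ}(u)-\rho_{K^\circ}(u)|\le \varepsilon$, then any $y\in K_i^\circ$, written as $y=tu$ with $u=y/\|y\|\in\S^{n-1}$ and $t\le\rho_{K_i^\circ}(u)$, either has $t\le\rho_{K^\circ}(u)$ (so $y\in K^\circ$ by star-shapedness at $0$) or satisfies $\|y-\rho_{K^\circ}(u)u\|=t-\rho_{K^\circ}(u)\le\varepsilon$; the symmetric estimate handles the reverse inclusion. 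The main technical point throughout is that the assumption $0\in\inter\,K$ is used crucially to bound $h_K$, and hence eventually $h_{K_i}$, uniformly away from zero; without this positivity the reciprocals could blow up and the argument would fail.
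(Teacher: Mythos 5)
Your proof is correct and complete: the identity $\rho_{L^\circ}=1/h_L$ on $\S^{n-1}$, the uniform positive lower bound on $h_K$ coming from $0\in\inter\,K$, and the passage from uniform convergence of radial functions to Hausdorff convergence are all handled properly. Note that the paper does not actually prove this lemma — it only cites \cite{Groemer:1996}*{Lem.\ 2.3.2} — so there is no in-text argument to compare against; your argument is the standard one via support/radial duality and fills that gap in a self-contained way.
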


By (\ref{eqn:normalcone}), the normal cone $N(K,x)$ at a boundary point $x$ is related to the exposed face of 
the polar body $K^\circ$ in direction $x/\|x\|$. Using the continuity of the polar map, Lemma 
\ref{lem:polarcont}, and the convergence of the exposed faces, Lemma \ref{lem:facecon}, we now obtain the 
convergence of the normal cones in regular boundary points. Note that for a regular boundary point $x\in 
\mathrm{reg}\, K$, $N_x/(N_x\cdot x)$ is an exposed point of $K^\circ$, i.e., $F\left(K^\circ,x/\|x\|\right) = 
\left\{N_x/(N_x\cdot x)\right\}$.

\begin{lemma}[Convergence of the normal cone]\label{lem:norconecon}
	Let $f_i$ be a sequence of positive continuous functions on $\S^{n-1}$, uniformly convergent to a positive 
continuous function $f$. For $x\in\mathrm{reg}\, [f]$ we set $\{x_i\} =\mathrm{pos}\{x\}\cap \bd\, [f_i]$. 
Then
	\begin{align*}
		\lim_{i\to\infty} N([f_i],x_i) = N([f],x).
	\end{align*}
	In particular, we have that
	\begin{align*}
		\lim_{i\to\infty} \sigma([f_i], x_i) = \lim_{i\to\infty} N([f_i], x_i)\cap\S^{n-1} = 
N([f],x)\cap\S^{n-1} = \{N_x\}.
	\end{align*}
\end{lemma}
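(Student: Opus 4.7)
The plan is to reduce convergence of normal cones to convergence of exposed faces of the polar bodies, invoking identity (\ref{eqn:normalcone}) together with the previously established Lemmas \ref{lem:facecon} and \ref{lem:polarcont}.

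First I would observe that the uniform convergence $f_i\to f$, combined with positivity of $f$ on the compact set $\S^{n-1}$, implies $\min_i \min_{\S^{n-1}} f_i>0$ for all large enough $i$, so that $0\in\inter[f_i]$ and $x_i$ is well-defined as the unique intersection of the ray $\mathrm{pos}\{x\}$ with $\bd\,[f_i]$. The same uniform convergence gives $[f_i]\to[f]$ in the Euclidean Hausdorff metric (the Wulff-shape continuity already used at (\ref{eqn:conv_float})), and then Lemma \ref{lem:polarcont} yields $[f_i]^\circ \to [f]^\circ$.

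The critical geometric observation is that since $x_i\in\mathrm{pos}\{x\}$ we have $x_i = t_i x$ for some $t_i>0$, so the unit direction $u:=x_i/\|x_i\|=x/\|x\|$ does \emph{not} depend on $i$. By the regularity assumption $x\in\mathrm{reg}\,[f]$, the discussion preceding the lemma identifies
\begin{align*}
  F([f]^\circ,u) \;=\; \widehat{x} \;=\; \{N_x/(N_x\cdot x)\},
\end{align*}
which is a single exposed point of $[f]^\circ$. Hence I can apply Lemma \ref{lem:facecon} to the convergent sequence $[f_i]^\circ\to[f]^\circ$ in the fixed direction $u$, concluding that $F([f_i]^\circ,u)\to\{N_x/(N_x\cdot x)\}$ in the Hausdorff metric.

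To finish, I would invoke the identity (\ref{eqn:normalcone}) at both $x_i$ and $x$: $N([f_i],x_i) = \mathrm{pos}\,F([f_i]^\circ,u)$ and $N([f],x) = \mathrm{pos}\{N_x\}$. Because the sets $F([f_i]^\circ,u)$ are eventually contained in a compact subset of $\R^n\setminus\{0\}$, the radial projection $y\mapsto y/\|y\|$ is uniformly continuous on their union, which transfers the Euclidean Hausdorff convergence of the faces to spherical Hausdorff convergence $\sigma([f_i],x_i)=\{y/\|y\|:y\in F([f_i]^\circ,u)\}\to\{N_x\}$ on $\S^{n-1}$. By definition this is the claimed convergence of the normal cones, and the ``in particular'' statement is immediate. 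The only subtle point is the constancy of $u$ in $i$, which is exactly what allows Lemma \ref{lem:facecon} to be applied without having to perturb the direction of the exposed face; everything else is a routine chaining of the preceding lemmas.
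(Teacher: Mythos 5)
Your proposal is correct and follows essentially the same route as the paper: fix the direction $u=x/\|x\|=x_i/\|x_i\|$, pass to polars via Lemma \ref{lem:polarcont}, apply the face-convergence Lemma \ref{lem:facecon} in the fixed direction $u$, and read off the normal cones from (\ref{eqn:normalcone}). The only difference is the very last technical step: the paper transfers Hausdorff convergence of the exposed faces to spherical-Hausdorff convergence of the normal cones by passing through the gnomonic projection $g_z$ and its explicit linear formula, whereas you use the radial normalization $y\mapsto y/\|y\|$ directly, noting that it is uniformly continuous once the faces are bounded away from the origin and that the Euclidean and geodesic metrics on $\S^{n-1}$ are uniformly equivalent. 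Both are valid; the gnomonic route fits the framework the paper set up just before the lemma (where $g_z$ is described as a homeomorphism between cones and bodies), while your radial-projection argument is a touch more elementary and self-contained. One small remark: it is worth stating explicitly, as the paper does, that since $F([f_i]^\circ,u)\subseteq H_{0,u,a_i}$ with $a_i>0$ and $a_i\to 1/\|x\|$, the sets $F([f_i]^\circ,u)$ are indeed eventually contained in a fixed compact set away from the origin and the cones $\mathrm{pos}\,F([f_i]^\circ,u)$ are pointed, so that $N([f_i],x_i)\cap\S^{n-1}$ is precisely the radial image of the face; you assert this but it deserves the one-line justification.
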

\begin{proof}
	We set $z=x/\|x\| = x_i/\|x_i\|$. By (\ref{eqn:normalcone}),
	\begin{align*}
		\mathrm{pos}\,\{N_x\} = N([f],x) = \mathrm{pos}\, F([f]^\circ,z),
	\end{align*}
	or equivalently $F([f]^\circ,z) = \{N_x/(N_x\cdot x)\}$.
	With Lemma \ref{lem:polarcont} and Lemma \ref{lem:facecon}, we conclude that
	\begin{align}\label{eqn:convnorm1}
		\lim_{i\to \infty} F([f_i]^\circ,z) = \left\{N_x/(x\cdot N_x)\right\}.
	\end{align}
	Since $0\in\inter\, [f_i]^\circ$ the exposed face in direction $z$ has a positive distance $a_i$ from the 
origin. Therefore $a_i=h_{F([f_i]^\circ,z)}(z)>0$, $F([f_i]^\circ,z)\subseteq H_{0,z,a_i}$. Also 
$h_{F([f]^\circ,z)}(z)=1/\|x\|>0$ and $\lim_{i\to \infty} a_{i} = 1/\|x\|$. Hence, the convex cone generated 
by the exposed face does not contain any linear subspace, or equivalently, 
$S_i:=(\mathrm{pos}\,F([f_i]^\circ,z)) \cap \S^{n-1}$ as well as $(\mathrm{pos}\, F([f]^\circ,z))\cap 
\S^{n-1}=\{N_x\}$ are contained in the open hemisphere with center in $z$. 
	
	We have to show that $S_i$ converges to $\{N_x\}$ with respect to spherical Hausdorff distance $\delta^s$. 
This is equivalent to the convergence of $g_z(S_i)$ to $g_z(N_x)$ in $z^\bot$ with respect to the Euclidean 
Hausdorff distance. Here, $g_z$ is the gnomonic projection in $z$, see (\ref{def:gnomonic}). We obtain 
$g_z(S_i) = (1/a_i)F([f_i]^\circ,z)-z$ and $g_z(N_x) = N_x/(N_x\cdot z)-z$. Since $a_i\to 1/\|x\|$ and 
$F([f_i]^\circ,z) \to F([f]^\circ,z) = \{N_x/(N_x\cdot x)\}$, we conclude that $g_z(S_i) \to g_z(N_x)$. This 
yields that $S_i\to \{N_x\}$, or equivalently, the convex cones generated by the exposed faces converge, 
i.e., 
	\begin{align*}
		\lim_{i\to \infty} \mathrm{pos}\, F([f_i]^\circ,z) 
		= \lim_{i\to \infty} \mathrm{pos}\, S_i 
		= \mathrm{pos}\, \{N_x\} 
		= \mathrm{pos}\, F([f]^\circ,z).
	\end{align*}
	By $(\ref{eqn:normalcone})$, this concludes the proof.
\end{proof}

We are now ready to prove Lemma \ref{lem:locality}.
\begin{proof}[\indent Proof of Lemma \ref{lem:locality}]
	Assume the opposite. Then there exists $x\in\mathrm{reg}\, [f]$ and $\varepsilon>0$ such that, for all 
$i\in\mathbb{N}$, we have
	\begin{align*}
		\left(\bigcap \left\{H^-_{0,v,f_i(v)}: v\in\S^{n-1}, d_s(v,N_x)< 
\varepsilon\right\}\cap\mathrm{pos}\,\{x\}\right)\backslash \left([f_i] \cap \mathrm{pos}\,\{x\}\right)  \neq 
\emptyset.
	\end{align*}
	By definition, $[f_i] = \{y\in\R^n: y\cdot v\leq f_i(v)\, \forall v\in\S^{n-1}\}$, Therefore, for $\{x_i\} 
= \bd\, [f_i] \cap \mathrm{pos}\,\{x\}$, there exists $z_i\in\S^{n-1}$ such that $x_i\cdot z_i = f_i(z_i)$. 
This yields $z_i\in\sigma([f_i],x_i)$ and we conclude
	\begin{align*}
		[f_i]\cap \mathrm{pos}\,\{x\} 
		&= \mathrm{conv}(x_i,0)= H^-_{x_i,z_i,0} \cap \mathrm{pos}\,\{x\}\\
		&= H^-_{0,z_i,z_i\cdot x_i} \cap \mathrm{pos}\,\{x\} 
		= H^-_{0,z_i,f_i(z_i)} \cap \mathrm{pos}\,\{x\}.
	\end{align*}
	Thus $d_s(z_i,N_x)\geq \varepsilon >0$ for all $i$. By compactness of $\S^{n-1}$, there is a convergent 
subsequence of $(z_i)_{i\in\mathbb{N}}$ with limit $z\neq N_x$. This is a contradiction, since 
$\sigma([f_i],x_i)\to \sigma([f],x)=\{N_x\}$ by Lemma \ref{lem:norconecon}.
\end{proof}

By Corollary \ref{cor:locality}, the weighted floating body is locally determined near any regular boundary 
point $x$. If is $x$ also exposed, then a neighborhood of $x$ in $K$ already determines the shape of 
$\F_\delta^\mu\, K$ near $x$ for $\delta\to 0^+$.

\begin{lemma}[Approximation of the weighted floating body]\label{lem:euclapprox}
	Let $K\in\K_0(\R^n)$ and $x\in\bd\, K$ be a regular and exposed point, that is, there is a unique outer 
unit normal $N_x$ and $K\cap H_{x,N_x,0} = \{x\}$. For $\varepsilon >0$ set $K'=K\cap B^n_e(x,\varepsilon)$. 
Then $x\in\bd\, K'$ is a regular and exposed point of $K'$. Furthermore:
	\begin{enumerate}
		\item[(i)] There exists $\Delta_\varepsilon$ such that for all $\Delta < \Delta_\varepsilon$ we have
		\begin{align*}
			K'\cap H^+_{x,N_x,-\Delta} = K\cap H^+_{x,N_x,-\Delta}.
		\end{align*}
		\item[(ii)] There exists $\xi_\varepsilon$ and $\eta_\varepsilon$ such that, for all $v\in \S^{n-1}$ 
with $d_s(v,N_x)<\xi_\varepsilon$ and $\Delta <\eta_\varepsilon$, we have
		\begin{align*}
			K'\cap H^+_{x,v,-\Delta} = K\cap H^+_{x,v,-\Delta}
		\end{align*}
		\item[(iii)] Let $0\in\inter\, K'$. There exists $\delta_\varepsilon$ such that, for all 
$\delta<\delta_\varepsilon$, we have 
		$0\in \inter(\F^\mu_\delta\, K'\cap \F^\mu_\delta\, K)$ and $x_\delta^{K'} = x_\delta^K$, where 
		$\{x_\delta^{K^*}\} = \bd\, \F^\mu_\delta\, K^* \cap \mathrm{pos}\,\{x\}$.
	\end{enumerate}
\end{lemma}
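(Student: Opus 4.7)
The plan is to treat the three parts in order, using a single exposedness-plus-compactness argument for (i) and (ii), and reducing (iii) to those two earlier parts together with Corollary \ref{cor:locality} applied to both $K$ and $K'$. The preliminary claim that $x$ remains regular and exposed for $K'$ is immediate: since $x$ lies in the interior of $B^n_e(x, \varepsilon)$, a full relative neighborhood of $x$ in $K$ coincides with one in $K'$, so $\sigma(K', x)$ and the exposed face of $K'$ at $x$ are determined locally, giving $\sigma(K', x) = \{N_x\}$ and $K' \cap H_{x, N_x, 0} = \{x\}$.

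For (i) I would argue by contradiction. Assuming no $\Delta_\varepsilon$ works would yield sequences $\Delta_i \to 0^+$ and points $y_i \in K \cap H^+_{x, N_x, -\Delta_i}$ with $\|y_i - x\| \geq \varepsilon$. After extracting a convergent subsequence $y_i \to y \in K$, the half-space inequality passes to the limit as $y \cdot N_x \geq h_K(N_x)$, placing $y$ in $K \cap H_{x, N_x, 0} = \{x\}$ and contradicting $\|y - x\| \geq \varepsilon$. The argument for (ii) is essentially identical, the only difference being the additional extraction $v_i \to N_x$; the limit inequality still reads $y \cdot N_x \geq h_K(N_x)$ and again forces $y = x$.

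For (iii) I would apply Corollary \ref{cor:locality} separately to $K$ and $K'$. First, I would shrink the angular neighborhood of $N_x$ to some opening $\xi$ below the $\xi_\varepsilon$ from (ii), and small enough that every exposed face of $K$ in direction $v$ from this neighborhood lies in $B^n_e(x, \varepsilon)$; this last property follows from the same compactness-exposedness argument as in (i)-(ii). On such a neighborhood the supporting points of $K$ lie in $K'$, so $h_K(v) = h_{K'}(v)$. Combining (ii) with the uniform convergence $s_\delta \to 0$ on $\S^{n-1}$ established after Proposition \ref{prop:euclpar}, for all $\delta$ below a suitable threshold the caps $K \cap H^+_{0, v, h_K(v) - s_\delta(v)}$ should still lie in $B^n_e(x, \varepsilon)$, hence coincide with the corresponding caps of $K'$, forcing the defining parameter $s_\delta(v)$ for $K$ and $K'$ to agree on this neighborhood. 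The inclusion $\F^\mu_\delta\, K' \subseteq \F^\mu_\delta\, K$ is immediate from $\mu(K' \cap H^+) \leq \mu(K \cap H^+)$, and together with $(\ref{eqn:conv_float})$ applied to $K'$ it produces $0 \in \inter(\F^\mu_\delta\, K' \cap \F^\mu_\delta\, K)$ for small $\delta$. Corollary \ref{cor:locality} applied to both bodies on the same angular neighborhood then yields $\conv(x_\delta^{K'}, 0) = \conv(x_\delta^K, 0)$, and in particular $x_\delta^{K'} = x_\delta^K$.

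The main obstacle I anticipate lies in the bookkeeping in (iii): reconciling depth measured from the origin (the parameter $s_\delta(v)$ appearing in the floating-body formula) with depth measured from $x$ in direction $v$, which is what (ii) controls directly. For $v$ close to $N_x$ these two depths differ by the continuous quantity $h_K(v) - x \cdot v$ which vanishes at $N_x$; this is what guarantees that caps cut off in direction $v$ at parameter $s_\delta(v)$ can be absorbed into the estimates provided by (ii) once $v$ is sufficiently close to $N_x$ and $\delta$ is sufficiently small.
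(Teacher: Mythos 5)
Your proposal is correct, and the overall architecture matches the paper's: localize caps via an exposedness argument in (i)--(ii), then combine (ii) with Proposition \ref{prop:euclpar} and Corollary \ref{cor:locality} in (iii). Two steps differ genuinely in execution. In (ii) the paper works quantitatively: starting from the $\Delta_{\varepsilon/2}$ of part (i) it manufactures explicit constants $\eta_\varepsilon = \tfrac{1}{2}\Delta_{\varepsilon/2}$ and $\xi_\varepsilon = \sqrt{2+(2/\varepsilon)\Delta_{\varepsilon/2}} - \sqrt{2+(1/\varepsilon)\Delta_{\varepsilon/2}}$ and verifies the cap equality through a chain of inner-product and norm estimates; your soft contradiction argument (extract $v_i \to N_x$, $\Delta_i$ bounded, $y_i \to y\in K$ with $\|y-x\|\ge\varepsilon$, pass the half-space inequality to the limit to force $y=x$) reaches the same conclusion without producing explicit constants, which is shorter and more conceptual, at the cost of the quantitative information the paper happens to record but does not need. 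In (iii) the paper re-parametrizes the Wulff form of the floating body so that depth is measured from the hyperplane \emph{through $x$} with normal $v$: it works with $s^{K^*}(\delta,v)$ defined by $\mu\bigl(K^*\cap H^+_{x,v,-s^{K^*}(\delta,v)}\bigr)=\delta^{(n+1)/2}$, so that (ii) directly gives $s^K(\delta,v)=s^{K'}(\delta,v)$ once both fall below $\eta_\varepsilon$, with no need to compare $h_K$ and $h_{K'}$ at all. You keep the origin-based parametrization $h_K(v)-s_\delta(v)$ of Proposition \ref{prop:euclpar}, which is exactly why you also need the extra observation that $h_K(v)=h_{K'}(v)$ for $v$ near $N_x$; that claim is true (it follows from the convergence of exposed faces, Lemma \ref{lem:facecon}, or the same compactness argument you invoke), and the ``reconciling depths'' obstacle you flag at the end is precisely the offset $h_K(v)-x\cdot v$, which vanishes at $N_x$ and which the paper's $x$-centered parametrization absorbs automatically. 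Both routes are valid: the paper's saves you the support-function comparison, yours saves the re-parametrization.
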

\begin{proof}
	\emph{(i):} Assume that the statement is false. Then there exists $\varepsilon>0$ such that for all 
$\Delta>0$, we have
	\begin{align*}
		\emptyset 
		&\neq \left(K\cap H^+_{x,N_x,-\Delta}\right) \backslash \left(K'\cap H^+_{x,N_x,-\Delta}\right)
		= (K\backslash B^n_e(x,\varepsilon)) \cap H^+_{x,N_x,-\Delta} \\
		&\subseteq (K\backslash \inter\, B^n_e(x,\varepsilon)) \cap H^+_{x,N_x,-\Delta}.
	\end{align*}
	For $\Delta_1\leq \Delta_2$, we have $$(K\backslash \inter B^n_e(x,\varepsilon))\cap H^+_{x,N_x,-\Delta_1} 
\subseteq (K\backslash \inter\, B^n_e(x,\varepsilon))\cap H^+_{x,N_x,-\Delta_2}.$$ 
	By compactness, we conclude that $\emptyset \neq (K\backslash \inter\, B^n_e(x,\varepsilon)) \cap 
H^+_{x,N_x,0}$. This is a contradiction, since $K\cap H^+_{x,N_x,0} = \{x\}$.
	
	\bigskip
	\emph{(ii):} Let $\varepsilon >0$. By (i), there exists $\Delta_{\varepsilon/2}$ such that 
$\Delta_{\varepsilon/2}<\varepsilon/2$ and 
	\begin{align}\label{eqn:contcon3}
		(K\backslash \inter\, B^n_e(x,\varepsilon/2))\cap H^+_{x,N_x,-\Delta_{\varepsilon/2}} = \emptyset.
	\end{align}
	We set $\eta_\varepsilon=(1/2)\Delta_{\varepsilon/2}$ and
	\begin{align*}
		\xi_{\varepsilon} = \sqrt{2+(2/\varepsilon)\Delta_{\varepsilon/2}} - 
\sqrt{2+(1/\varepsilon)\Delta_{\varepsilon/2}}.
	\end{align*}
	Then $\eta_\varepsilon<\varepsilon/4$ and $\xi_\varepsilon>0$.
	We show that for all $v\in\S^{n-1}$ with $d_s(v,N_x)<\xi_{\varepsilon}$, we have that
	\begin{align}\label{eqn:contcon2}
		(K\backslash \inter\, B^n_e(x,\varepsilon)) \cap H^+_{x,v,-\eta_{\varepsilon}} =\emptyset,
	\end{align}
	which yields $K'\cap H^+_{x,v,-\Delta} = K\cap H^+_{x,v,-\Delta}$ for all $\Delta < \eta_{\varepsilon}$.

	Assume that $(\ref{eqn:contcon2})$ is not true. Then there exists $z\in (K\backslash \inter\, 
B^n_e(x,\varepsilon)) \cap H^+_{x,v,-\eta_\varepsilon}$. 
	Since $K$ is convex, the segment $\mathrm{conv}(z,x)$ is contained in $K$. Furthermore, since 
$\|z-x\|>\varepsilon$, there exists $z'\in\bd\, B^n_e(x,\varepsilon)\cap \mathrm{conv}(z,x)$. 
	
	We will show that $z'\in H^+_{x,N_x,-\Delta_{\varepsilon/2}}$, i.e., $z'\cdot N_x \geq x\cdot N_x - 
\Delta_{\varepsilon/2}$. This will be a contradiction to $(\ref{eqn:contcon3})$, since $z'\in K\backslash 
\inter\, B^n_e(x,\varepsilon/2)$.
	
	Since $z'\in\bd\, B^n_e(x,\varepsilon)$, we have $z'=x + \varepsilon \|z'-x\|^{-1}(z'-x)$ and therefore
	$z'\cdot N_x = x\cdot N_x + \varepsilon \|z'-x\|^{-1} (z'-x)\cdot N_x$.
	Since $z'\in H^+_{x,v,-\eta_\varepsilon}$, we obtain $z'\cdot v \geq x\cdot v - \eta_{\varepsilon}$ or
	$\|z'-x\|^{-1}(z'-x)\cdot v \geq -\varepsilon^{-1}\eta_{\varepsilon}$.
	Put $w= \|z'-x\|^{-1}(z'-x)$. Then
	\begin{align*}
		\|w-v\|^2 = 2-2(w\cdot v) \leq 2\left(1+\varepsilon^{-1}\eta_{\varepsilon}\right).
	\end{align*}
	Note that $d_s(v,N_x)\leq \xi_\varepsilon$ implies that $\|v-N_x\|\leq \xi_\varepsilon$. This, together 
with the definition of $\xi_\varepsilon$, implies that
	\begin{align*}
		w\cdot N_x 
		&= 1-\frac{\|w-N_x\|^2}{2} \geq 1-\frac{(\|w-v\|+\|v-N_x\|)^2}{2}\\
		& = w\cdot v - \|w-v\|\|v-N_x\| - \frac{\|v-N_x\|^2}{2}\\
		&\geq 
-\frac{\eta_\varepsilon}{\varepsilon}-\sqrt{2+\frac{2\eta_\varepsilon}{\varepsilon}}\xi_\varepsilon - 
\frac{\xi_\varepsilon^2}{2}
		=1 - \left(\sqrt{1+\frac{\eta_\varepsilon}{\varepsilon}} + \frac{\xi_\varepsilon}{\sqrt{2}}\right)^2
		= -\frac{\Delta_{\varepsilon/2}}{\varepsilon}.
	\end{align*}
	Hence $z'\cdot N_x \geq x\cdot N_x - \Delta_{\varepsilon/2}$.
	
	\bigskip
	\emph{(iii):} By Proposition \ref{prop:euclpar}, we can write $\F_{\delta}^\mu\, K' = 
\bigcap_{v\in\S^{n-1}} H^-_{x,v,-s^{K'}(\delta,v)}$.
	Here $-s^{K'}(\delta,v)$ is uniquely determined by $\delta^{(n+1)/2}=\mu\left(K'\cap 
H^+_{x,v,-s^{K'}(\delta,v)}\right)$ and is continuous in both arguments.
	By (ii), there exists $\xi_\varepsilon$ and $\eta_\varepsilon$ such that for all $v\in\S^{n-1}$ with 
$d_s(v,N^K_x)<\xi_{\varepsilon}$, we have 
	\begin{align*}
		\mu\left(K'\cap H^+_{x,v,-\Delta}\right) = \mu\left(K\cap H^+_{x,v,-\Delta}\right),
	\end{align*}
	for all $\Delta < \eta_\varepsilon$. Hence there exists $\delta_1>0$ such that for all $\delta<\delta_1$ 
and $d_s(v,N_x^K)<\xi_\varepsilon$ we have $s^K(\delta,v)= s^{K'}(\delta,v)$.

	By Corollary \ref{cor:locality} applied to $K$ and $K'$ with $\varepsilon=\xi_\varepsilon$ there exist 
$\delta_2$ and $\delta_3$ such that for all $\delta<\min\{\delta_1,\delta_2,\delta_3\}$ we have $0\in\inter\, 
\F^\mu_\delta\, K'$, $0\in\inter\, \F^\mu_\delta\, K$ and
	\begin{align*}
		(\F_\delta^\mu K) \cap\mathrm{pos}\,\{x\} &= \bigcap\left\{ H^-_{x,v,-s^K(\delta,v)}: v\in \S^{n-1}, 
d_s(v,N_x^K)<\xi_\varepsilon\right\} \cap \mathrm{pos}\,\{x\}\\
		&= \bigcap\left\{ H^-_{x,v,-s^{K'}(\delta,v)}:v\in\S^{n-1},d_s(v,N_x^K)<\xi_\varepsilon\right\} \cap 
\mathrm{pos}\,\{x\}\\
		&= (\F_{\delta}^\mu\, K')\cap \mathrm{pos}\,\{x\}.
	\end{align*}
	This implies in particular that $x_\delta^{K'} = x_\delta^{K}$ since it is the unique intersection point 
of $\mathrm{pos}\, \{x\}$ with the boundary of the floating body $\F_\delta^\mu K'$, or $\F_\delta^\mu K$.
\end{proof}

\section{Hyperbolic Convex Geometry}

In the theory of Riemannian manifolds, hyperbolic $n$-space $\H^n$ is the simply-connected, complete 
Riemannian manifold of constant sectional curvature $-1$. Hyperbolic convex bodies are compact subsets such 
that for any two points in the set, the geodesic segment between them is contained in the set. Hyperbolic 
convex geometry is the study of intrinsic notions of hyperbolic convex bodies.

In his famous Erlangen program Felix Klein characterized geometries based on their symmetry groups. In the 
spirit of this approach, we may view Euclidean convex geometry as the study of notions on Euclidean convex 
bodies that are invariant under the group of rigid motions.
In the projective model (also known as Beltrami--Cayley--Klein model) of hyperbolic space, that is, in the 
open unit ball $\B^n$, hyperbolic convex geometry can be viewed as the study of notions on Euclidean convex 
bodies $K\in\K(\B^n)$, invariant under hyperbolic motions.

In the following we recall basic facts about the projective model of hyperbolic space. For a rigorous 
exposition see, e.g., \cite{Alekseevskij:1993} or \cite{Ratcliffe:2006}.

We consider $\B^n$ together with the Riemannian metric tensor $g^h$ which defines a scalar product in tangent 
space $T_p\B^n$ for any point $p\in\B^n$ by
\begin{align*}
	g^h_p(X_p,Y_p) = \frac{X_p\cdot Y_p}{1-\|p\|^2} + \frac{\left(X_p\cdot p\right)\left(Y_p\cdot 
p\right)}{\left(1-\|p\|^2\right)^2},\quad X_p,Y_p\in T_p\B^n.
\end{align*}
Here and in the following we use the natural identification of $T_p\B^n=T_p\R^n$ with $\R^n$.
Then $(\B^n,g^h)$ is a simply-connected, complete Riemannian manifold with constant sectional curvature $-1$ 
and therefore isometric to $\H^n$.

The Euclidean metric tensor is $g^e$ and it is induced naturally by $g^e_p(X_p,Y_p) = X_p\cdot Y_p$ for 
$X_p,Y_p\in T_p\R^n\cong \R^n$.
When $p=0$, then 
\begin{align}\label{eqn:tensor_equal}
	g^h(X_0,Y_0) = X_0\cdot Y_0 = g^e(X_0,Y_0)
\end{align} 
and therefore the Euclidean metric tensor at the origin agrees with the hyperbolic metric tensor.
Geodesic curves in $(\B^n,g^h)$ are straight lines in $\R^n$ intersected with $\B^n$ and the geodesic 
distance,
or \emph{hyperbolic distance}, $d_h(p,q)$ between $p,q\in \B^n$ is, see for example 
\cite{Alekseevskij:1993}*{Sec.\ 1.5} and \cite{Ratcliffe:2006}*{Ch.\ 6},
\begin{align}\label{eqn:hypdis}
	\cosh\, d_h(p,q) = \frac{1-p\cdot q}{\sqrt{1-\|p\|^2}\sqrt{1-\|q\|^2}}.
\end{align}
Note that 
\begin{equation}\label{eqn:dh}
\tanh\, d_h(p,0) = \|p\|.
\end{equation}

Isometries of the projective model are also called motions and the \emph{group of motions} is $\M(\B^n)$.
The group of motions $\M(\B^n)$ is isomorphic to the restricted Lorentz group $\mathrm{SO}^+(n,1)$ and 
hyperbolic $n$-space is characterized by $\M(\B^n)$ in the following sense: the homogeneous space defined by 
$(\mathrm{SO}^+(n,1),\mathrm{SO}(n))$ is isomorphic to $\H^n$, see e.g.\ \cite{Alekseevskij:1993}*{Ch.\ 1, 
\textsection 2}. Note that, in the projective model, a hyperbolic motion extends to a uniquely determined 
collineation of the projective closure of $\R^n$ and conversely any collineation that maps $\B^n$ to $\B^n$ 
restricts to an hyperbolic motion on $\B^n$.

Geodesics in $(\B^n,g^h)$ are the chords of $\B^n$. More general, any totally geodesic subspace of dimension 
$k$, called a \emph{$k$-plane} of $(\B^n,g^h)$, is the intersection of an affine subspace of $\R^n$ with 
$\B^n$.
Hence, a line is a $1$-plane or chord of $\B^n$ and a hyperplane is a $(n-1)$-plane. 

The (hyperbolic) exponential map $\mathrm{exp}_p^h$ in a point $p\in\B^n$ maps any tangent vector $X_p\in 
T_p\B^n$ to the uniquely determined point $q$ in $\B^n$ such that $d_h(p,q) = \|X_p\|$. For the unit speed 
geodesic path $\gamma\colon [0,\|X_p\|] \to \B^n$ from $p$ to $q$ we have $\gamma'(0) = \|X_p\|^{-1} X_p$. By 
(\ref{eqn:dh}),
\begin{equation}\label{eqn:exph}
	\exp_0^h(X_0) = \frac{\tanh \|X_0\|}{\|X_0\|} X_0,\quad X_0\in T_0\B^n\cong \R^n.
\end{equation}

An affine hyperplane $H$ restricted to $\B^n$ can be viewed as an object of hyperbolic space or Euclidean 
space, depending on whether we choose the hyperbolic metric tensor $g^h$ or the Euclidean metric tensor $g^e$. 
The normal vector in any point $p\in H$ is also depends on the metric we choose and therefore we distinguish 
between the hyperbolic unit normal vector $N_p^h$ and the Euclidean unit normal vector $N_p^e$.  To more 
precise, $N_p^h\in T_p\B^n$ is a unit vector with respect to $g^h$ and $N_p^e$ is a unit vector with respect 
to $g^e$. The normal vectors are related and we include a proof of the following fact for the readers 
convenience.

\newpage
\begin{lemma}\label{lem:hnormal}
	Let $p\in \B^n$, $X_1,\ldots, X_{n-1}\in T_p\B^n$ be linearly independent and set 
$H=\mathrm{lin}(X_1,\ldots,X_{n-1})$. Then there are unique $N^h_p,N^e_p\in T_p\B^n$ such that:
	\begin{enumerate}
		\item[(a)] $N^h_p$ is orthonormal to $H$ with respect to $g^h$ and $N^e_p$ is orthonormal to $H$ with 
respect to $g^e$.
		\item[(b)] The frames $(X_1,\ldots,X_{n-1},N^h_p)$ and $(X_1,\ldots,X_{n-1},N^e_p)$ are positive 
oriented.
		\item[(c)] We have that
		\begin{equation}\label{eqn:normals}
			N^h_p = \sqrt{\frac{1-\|p\|^2}{1-(p\cdot N_p^e)^2}} (N_p^e-(p\cdot N_p^e)p).
		\end{equation}
	\end{enumerate}
\end{lemma}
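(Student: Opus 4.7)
The plan is to handle (a) and (b) by one-dimensional linear algebra and then derive (c) by a direct computation guided by the formula itself.

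For (a) and (b), since $g^h$ at $p$ is a genuine (positive definite) inner product on $T_p\B^n\cong\R^n$, the $g^h$-orthogonal complement of the hyperplane $H$ is one-dimensional; normalization gives a pair of unit vectors, and the positive orientation condition in (b) singles out exactly one of them. The same reasoning applies to $g^e$. So (a) and (b) reduce to the statement that the orthogonal complement of a hyperplane in an inner product space is a line.

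For (c), my strategy is to produce a candidate $g^h$-normal $W$, check orthogonality, normalize, and finally verify orientation. I would set
\begin{align*}
    W := N_p^e - (p\cdot N_p^e)\, p,
\end{align*}
which is the right ansatz because the $p$-dependent second term in $g^h$ suggests that one must correct $N_p^e$ by its Euclidean component along $p$. For any $X\in H$ we have $N_p^e\cdot X=0$, hence $W\cdot X=-(p\cdot N_p^e)(p\cdot X)$ and $W\cdot p=(p\cdot N_p^e)(1-\|p\|^2)$. Substituting into the defining formula for $g^h_p$, the two terms cancel and we get $g^h_p(W,X)=0$, so $W$ is $g^h$-normal to $H$.

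Next I would compute $g^h_p(W,W)$ by expanding $W\cdot W=1-(p\cdot N_p^e)^2(2-\|p\|^2)$ and $(W\cdot p)^2=(p\cdot N_p^e)^2(1-\|p\|^2)^2$; after combining over the common denominator $1-\|p\|^2$, a short simplification gives
\begin{align*}
    g^h_p(W,W) = \frac{1-(p\cdot N_p^e)^2}{1-\|p\|^2}.
\end{align*}
Dividing $W$ by this $g^h$-length reproduces the claimed expression for $N_p^h$. For orientation, I would decompose $p=(p\cdot N_p^e)N_p^e+h$ with $h\in H$, which yields $\det(X_1,\ldots,X_{n-1},N_p^h)=c\,(1-(p\cdot N_p^e)^2)\det(X_1,\ldots,X_{n-1},N_p^e)$ with $c>0$. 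Since $\|p\|<1$ forces $|p\cdot N_p^e|<1$, the scalar multiplier is positive, so the two frames have the same sign.

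The only mildly delicate step is the norm computation, where the key algebraic cancellation $(2-\|p\|^2)-(1-\|p\|^2)=1$ produces the clean denominator $1-\|p\|^2$; everything else is organized bookkeeping once the correct candidate $W$ has been written down.
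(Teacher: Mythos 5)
Your proposal is correct and follows essentially the same route as the paper: establish existence and uniqueness of the unit normal (up to sign) by one-dimensional linear algebra for any positive definite form, fix the sign by orientation, then verify that the explicit right-hand side of \eqref{eqn:normals} satisfies the defining properties with respect to $g^h$. The paper leaves the verification as a one-line remark, whereas you carry out the orthogonality check $g^h_p(W,X)=0$, the norm computation $g^h_p(W,W)=\tfrac{1-(p\cdot N_p^e)^2}{1-\|p\|^2}$, and the orientation argument explicitly; all three computations are correct.
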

\begin{proof}
	Let $g$ be a positive definite linear form on $T_p\B^n$. Then there is a uniquely determined vector $v\in 
T_p\B^n$, up to sign, such that $H$ is orthonormal to $v$ with respect to $g$, i.e., for $i=1,\ldots, n-1$, we 
have that $g(X_i,v)=0$ and $g(v,v)=1$. The sign of $v$ is determined by the condition that 
$(X_1,\ldots,X_{n-1},v)$ is a positive frame, which means that $\det(X_1,\ldots,X_{n-1},v)>0$.
	
	To conclude the proof, we only need to show $(c)$. We define $v$ as the vector obtained by the right-hand 
side of $(\ref{eqn:normals})$ and verify that $v$ satisfies $(a)$ and $(b)$ for $g^h$. Since $N^h_p$ is 
uniquely determined by these properties we conclude $N^h_p=v$.
\end{proof}
\bigskip

A \emph{hyperbolic ball} $B^n_h(p,r)$, is the set of all points $q\in \B^n$ with $d_h(q,p)\leq r$. For $p=0$ 
and by (\ref{eqn:dh}), we have
\begin{align}\label{eqn:hball}
	B^n_h(0,r) = \{q\in\B^n: d_h(q,0)\leq r\} = B^n_e(0,\tanh\, r).
\end{align}
Hence, the hyperbolic balls with center in the origin are also Euclidean balls in the projective model.
For a hyperbolic motion $m$ such that $m(0)=x$ we have $B^n_h(p,r) = m(B^n_h(0,r))=m(B^n_e(0,\tanh\, r))$. So 
hyperbolic balls in $\B^n$ are images of Euclidean balls with center $0$ under hyperbolic motions and 
therefore ellipsoids.

A subset $C\subseteq \B^n$ is hyperbolic convex if and only if $C$ is convex in the Euclidean sense as a 
subset of $\R^n$.
Planes and open, as well as closed, half-spaces are hyperbolic convex subsets.
A \emph{(hyperbolic) convex body} is a compact convex subset of $\B^n$.
Recall that $\K_0(\B^n)$ denotes set of convex bodies with non-empty interior contained in $\B^n$.

For a measurable subset $A\subset \B^n$, the \emph{hyperbolic volume} $\vol_n^h(A)$ in the projective model is
\begin{align}\label{eqn:volh}
	\vol_n^h(A) = \int\limits_{A} (1-\|x\|^2)^{-\frac{n+1}{2}}\, d\vol_n^e(x).
\end{align}
The \emph{hyperbolic Hausdorff distance} $\delta^h$ between hyperbolic convex bodies $K,L\in\K(\B^n)$ is 
defined by
\begin{align*}
	\delta^h(K,L) = \max \left\{\sup_{p\in K}\inf_{q\in L} d_h(p,q), \sup_{q\in L}\inf_{p\in K} 
d_h(p,q)\right\},
\end{align*}
and the \emph{hyperbolic volume difference metric} $\theta^{h}$ is
\begin{align*}
	\theta^{h}(K,L) = \vol_n^h(K\backslash L)+\vol_n^h(L\backslash K), \quad K,L\in \K_0(\B^n).
\end{align*}
Hyperbolic convex geometry can be viewed as study of notions on convex bodies $\K_0(\B^n)$ that are invariant 
under the group of motions $\M(\B^n)$. For instance, the hyperbolic volume $\vol_n^h$, the hyperbolic 
Hausdorff distance $\delta^h$ and the volume difference metric $\theta^{h}$, are all invariant with respect to 
hyperbolic motions and are therefore intrinsic notions of hyperbolic convex geometry.

\bigskip
The Euclidean support function $h_K(.)$ of a convex body $K$ measures for any direction $u\in\S^{n-1}$ the 
signed distance of a supporting hyperplane in direction $u$ to $K$ and the origin. Equivalently, one can use 
the orthogonal projection $K|\ell_u$ of $K$ to the line $\ell_u$ through the origin in direction $u$. Then 
\begin{align}\label{eqn:esupport}
	K|\ell_u = \mathrm{conv}(-h_K(-u)u, h_K(u)u).
\end{align}

We will define the hyperbolic support function in a similar way, but first we have to recall some further 
facts about hyperbolic space and the projective model.
For a closed convex subset $A\subset\H^n$ and a point $p\in \H^n$ there is a unique point $q\in A$ that 
minimizes the distance $d_h(p,q)$. The metric projection $p_A\colon\H^n\to A$ assigns to each point $p$ this 
unique point. Hence,
\begin{align*}
	d_h(p,p_A(p)) = \min_{q\in A} d_h(p,q).
\end{align*}
If $p\not\in A$, then $p_A(p)\in\bd\, A$ and the line spanned by $p_A(p)$ and $p$ is perpendicular to the 
boundary of $A$ in $p_A(p)$.
In particular, in the projective model $\B^n$ the projection $K|L$ of a convex body $K\in\K(\B^n)$ to a 
$k$-plane $L$ through $0$ is given by the Euclidean projection of $K$ to $L$. This follows, since for any 
point $p\in\B^n\cong T_p\B^n$ the normal directions $Y_p\in T_p\B^n$ are
determined by
\begin{align}\label{eqn:proj}
	0=g_p(p,Y_p) = \frac{p\cdot Y_p}{1-\|p\|^2} + \frac{\|p\|^2 (p\cdot Y_p)}{(1-\|p\|^2)^2} = \frac{p\cdot 
Y_p}{(1-\|p\|^2)^2}.
\end{align}

\begin{definition}[hyperbolic support function]\label{def:hsupport}
	Let $p\in \H^n$ be a fixed point and identify the set of unit vectors in $T_p\H^n$ with $\S^{n-1}$. For 
any hyperbolic convex body $K\subset \H^n$, the \emph{hyperbolic support function} $h^h_p(K,.)\colon 
\S^{n-1}\to \R$ of $K$ with respect to $p$ is defined by
	\begin{align}\label{eqn:defhsupport}
		K|\ell_{p,u} = \exp_p^h\left(\mathrm{conv}\left(-h^h_p(K,-u)u,h^h_p(K,u)u\right)\right),
	\end{align}
	where $\ell_{p,u}=\exp_p^h(\R u)$, i.e., the uniquely determined geodesic line in $p$ in direction $u$.
\end{definition}

In the projective model the hyperbolic support function for $p=0$ is related to the Euclidean support function 
in the following way.
\begin{lemma}\label{lem:hsupport}
	Let $K\subset \B^n$ be a convex body. For $u\in \S^{n-1}$, we have that
	\begin{align}\label{eqn:hsupport}
		\tanh\, h^h_0(K,u) = h_K(u).
	\end{align}
\end{lemma}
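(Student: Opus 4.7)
The plan is to chain together three ingredients already available in the excerpt: the explicit formula for the exponential map at the origin, the coincidence of hyperbolic and Euclidean orthogonal projections onto lines through the origin, and the Euclidean description of the projection via the support function.

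First I would identify the hyperbolic line $\ell_{0,u} = \exp_0^h(\R u)$ explicitly. Parameterising $\R u$ by $t \mapsto tu$ with $u\in\S^{n-1}$ and applying \eqref{eqn:exph}, one has $\exp_0^h(tu) = \tanh(t)\,u$ for every $t\in\R$, since $\tanh$ is odd. Thus $\ell_{0,u}$ coincides, as a subset of $\B^n$, with the Euclidean chord of $\B^n$ in direction $u$, and the exponential map parametrises this chord by hyperbolic arc length.

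Next I would compare the hyperbolic and Euclidean projections of $K$ onto $\ell_{0,u}$. By \eqref{eqn:proj}, at $p=0$ the condition for a tangent vector $Y_0$ to be $g^h$-orthogonal to $u$ is simply $u\cdot Y_0 = 0$, i.e.\ the same as the Euclidean orthogonality condition. Since the metric projection onto a complete totally geodesic subspace is characterised by perpendicularity, the hyperbolic projection $K|\ell_{0,u}$ agrees with the Euclidean orthogonal projection of $K$ onto $\R u$. By \eqref{eqn:esupport}, this projection equals $\mathrm{conv}(-h_K(-u)u,\,h_K(u)u)$.

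Finally I would invoke the definition \eqref{eqn:defhsupport} of the hyperbolic support function. Applying $\exp_0^h$ to the segment $\mathrm{conv}(-h_0^h(K,-u)u,\,h_0^h(K,u)u)$ and using $\exp_0^h(tu)=\tanh(t)u$, this segment is mapped onto
\begin{equation*}
\mathrm{conv}\bigl(-\tanh(h_0^h(K,-u))\,u,\;\tanh(h_0^h(K,u))\,u\bigr).
\end{equation*}
Equating this with the Euclidean expression for $K|\ell_{0,u}$ and comparing the two endpoints on the chord yields both $\tanh\,h_0^h(K,u)=h_K(u)$ and the analogous identity for $-u$.

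There is no real obstacle here; the only point that requires any care is the justification that the hyperbolic metric projection onto $\ell_{0,u}$ coincides with the Euclidean one, which is handled cleanly by \eqref{eqn:proj} because the basepoint is the origin (this is exactly the reason the lemma is stated only for $p=0$, compare also \eqref{eqn:tensor_equal}). Everything else is bookkeeping with the explicit formula for $\exp_0^h$.
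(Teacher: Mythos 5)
Your outline follows the paper's proof closely: compute the chord $\ell_{0,u}$ via \eqref{eqn:exph}, identify the hyperbolic projection onto it with the Euclidean one using \eqref{eqn:proj}, read off that projection from \eqref{eqn:esupport}, and untangle with $\tanh$. There is, however, one technical slip in the projection step. You verify $g^h$-orthogonality to $u$ ``at $p=0$,'' citing \eqref{eqn:proj}. But the check at $p=0$ is the trivial case (by \eqref{eqn:tensor_equal} \emph{all} inner products at the origin are Euclidean) and it is not the relevant one: the foot of the perpendicular from a point $q\in K$ onto $\ell_{0,u}$ is in general not the origin, and that is where perpendicularity must be checked. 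What \eqref{eqn:proj} actually supplies, and what the argument needs, is a statement at an \emph{arbitrary} $p\in\ell_{0,u}$: since $T_p\ell_{0,u}=\R u$ and $p$ itself lies in $\R u$, applying \eqref{eqn:proj} with the tangent direction $p$ shows that $Y_p\perp_{g^h_p}\ell_{0,u}$ forces $p\cdot Y_p=0$, whereupon the cross term $(X\cdot p)(Y_p\cdot p)$ in $g^h_p(X,Y_p)$ vanishes for every $X\in\R u$ and $g^h_p$-orthogonality to $\R u$ reduces to Euclidean orthogonality. So the hypothesis that matters is that the \emph{line} passes through the origin, not that the exponential map is based at the origin. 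With that correction your proof and the paper's are essentially the same.
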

\begin{proof}
	Since $0\in\ell_{0,u}$ we have that the hyperbolic projection of $K$ to $\ell_{0,u}$ is the same as the 
Euclidean projection, see (\ref{eqn:proj}). Therefore, by (\ref{eqn:esupport}) and the definition of the 
hyperbolic support function, (\ref{eqn:defhsupport}), we have that
	\begin{align*}
		\mathrm{conv}\left(-h_K(-u)u,h_K(u)u\right) = 
\exp_0^h\left(\mathrm{conv}\left(-h_p^h(K,-u)u,h_p^h(K,u)u\right)\right).
	\end{align*}
	Using (\ref{eqn:exph}), we conclude (\ref{eqn:hsupport}).
\end{proof}

\subsection{Boundary structure of a convex body}

Let $K\in\K_0(\B^n)$. The boundary $\bd\, K$ is a hypersurface that is endowed with a Riemannian structure 
depending on the metric used in $\B^n$, i.e. either the Euclidean metric tensor $g^e$ or the hyperbolic metric 
tensor $g^h$.

The hyperbolic surface area element $d\vol_{\bd\, K}^h$ is related to the Euclidean surface area element 
$d\vol_{\bd\, K}^e$ in the following way:
The tangent space $T_x\bd K$ at a boundary point $x$ is a linear subspace of $T_x\B^n$ and by our 
identification of $T_x\B^n$ with $\R^n$ it does not depend on the underling metric tensor. By 
(\ref{eqn:normals}) and (\ref{eqn:volh}), we find that the Riemannian volume form induced by $g^h$ and $g^e$ 
on the boundary of $K$ are related, for $X_1,\ldots, X_{n-1}\in T_x \bd\, K$, by
\begin{align*}
	&d\vol_{\bd\, K}^h\left(X_1,\ldots,X_{n-1}\right) 
	= d\vol_n^h\left(X_1,\ldots,X_{n-1},N_x^h\right)\\
	&\quad = (1-\|x\|^2)^{-(n+1)/2} d\vol_n^e\left(X_1,\ldots,X_{n-1},(N_x^h\cdot N_x^e)N_x^e\right)\\
	&\quad = \sqrt{\frac{1-(x\cdot N^e_x)^2}{(1-\|x\|^2)^{n}}} d\vol_{\bd\, 
K}^e\left(X_1,\ldots,X_{n-1}\right).
\end{align*}
In particular, for $K\in\K_0(\B^n)$ and a measurable function $f\colon \bd\, K\to \R$, we have that
\begin{align}\label{eqn:volbd}
	\int\limits_{\bd\, K} f(x) \, d\vol_{\bd\, K}^h(x) = \int\limits_{\bd\, K} f(x) \sqrt{\frac{1-(x\cdot 
N^e_x)^2}{(1-\|x\|^2)^{n}}}\, d\vol_{\bd\, K}^e(x).
\end{align}

The Riemannian metric induced on the boundary of $K$ is denoted by $\hat{g}^h_p = g_p^h|_{\bd\, K}$ or 
$\hat{g}^e_p = g_p^e|_{\bd\, K}$. If $0\in\bd\, K$, then, by (\ref{eqn:tensor_equal}), $\hat{g}^h_0 = 
\hat{g}_0^e$. Therefore, in the projective model the hyperbolic curvature of $\bd\, K$ in $0$ is the same as 
the Euclidean curvature.
In the following theorem we collect the relations between the hyperbolic notions at a boundary point and the 
Euclidean ones in the projective model. This is definitely well-known and we again include a proof for 
convenience.

\begin{theorem}\label{thm:gauss}
	Let $M$ be a smooth orientable manifold of dimension $n-1$ immersed in $\B^n$. We denote the metric 
induced by $g^h$, resp.\ $g^e$, on $M$ by $\hat{g}^h$, resp.\ $\hat{g}^e$. The unique unit normal vector field 
along $M$ is denoted by $N^h$, resp.\ $N^e$. For $x\in M$, we have
	\begin{align*}
		\det\, \hat{g}^h_x = (1-\|x\|^2)^{-n}(1-(N_x^e\cdot x)^2).
	\end{align*}

	Denoting the covariant derivative on $\B^n$ by $\nabla^h$, resp.\ $\nabla^e$, the second fundamental form 
$\hat{h}^h$, resp.\ $\hat{h}^e$, is determined by 
	\begin{align*}
		\nabla^*_X Y &= \overline{\nabla}^*_X Y + \hat{h}^*(X,Y) N^*,
	\end{align*}
	where $\overline{\nabla}^h$, resp.\ $\overline{\nabla}^e$, denotes the induced covariant derivative on 
$M$.
	Then
	\begin{align*}
		\hat{h}^h_x = \hat{h}^e_x \left(1-\|x\|^2\right)^{-1/2}\left(1-(N_x^e\cdot x)^2\right)^{-1/2}.
	\end{align*}
	
	Let $S_x^h= (\hat{g}^h_x)^{-1}\hat{h}_x^h$ be the shape operator, i.e., the $(1,1)$-tensor equivalent to 
$\hat{h}^h_x$ and obtained by raising an index. For the Gauss--Kronecker curvature $H_{n-1}^*(M,x)=\det 
S_x^*$, we obtain
	\begin{align}\label{eqn:gaussstub}
		H_{n-1}^h(M,x) = H_{n-1}^e(M,x) \left(1-\|x\|^2\right)^{\frac{n+1}{2}}\left(1-(N_x^e\cdot 
x)^2\right)^{-\frac{n+1}{2}}.
	\end{align}
\end{theorem}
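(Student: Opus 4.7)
The plan is to prove parts (a) and (b) separately and then deduce (c), since the Gauss--Kronecker curvature is $H_{n-1}^\ast=\det S_x^\ast=\det\hat{h}^\ast_x/\det\hat{g}^\ast_x$. First I would tackle (a) by picking a basis $X_1,\ldots,X_{n-1}$ of $T_x M$ orthonormal with respect to $\hat{g}^e_x$, extended to a Euclidean orthonormal basis of $T_x\B^n$ by $N^e_x$. Substituting into the explicit formula for $g^h$ gives
\[
\hat{g}^h_x(X_i,X_j)=\frac{\delta_{ij}}{1-\|x\|^2}+\frac{(X_i\cdot x)(X_j\cdot x)}{(1-\|x\|^2)^2},
\]
so the Gram matrix has the form $\alpha I_{n-1}+\beta\, aa^\top$ with $a_i=X_i\cdot x$. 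The matrix determinant lemma reduces everything to computing $\|a\|^2=\sum_i(X_i\cdot x)^2$, which by the Pythagorean expansion of $x$ in the chosen orthonormal basis equals $\|x\|^2-(N^e_x\cdot x)^2$. A short simplification then produces $(1-\|x\|^2)^{-n}(1-(N^e_x\cdot x)^2)$.

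For (b), the key step is the identity
\[
\nabla^h_X Y=\nabla^e_X Y+\frac{(X\cdot x)\, Y+(Y\cdot x)\, X}{1-\|x\|^2}
\]
for the Levi-Civita connection of $g^h$ in ambient Euclidean coordinates. This is a direct Christoffel symbol computation using the inverse metric $g^{ij}=(1-\|x\|^2)(\delta^{ij}-x^ix^j)$, and can be sanity-checked by verifying that Euclidean straight lines (with the correct hyperbolic parametrization) satisfy $\nabla^h_{\dot\gamma}\dot\gamma=0$. The crucial observation is that when $X,Y$ are both tangent to $M$ the correction term is manifestly tangent to $M$ as well, and is therefore $g^h$-orthogonal to $N^h_x$. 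Consequently,
\[
\hat{h}^h_x(X,Y)=g^h_x(\nabla^h_X Y,N^h_x)=g^h_x(\nabla^e_X Y,N^h_x)=\hat{h}^e_x(X,Y)\, g^h_x(N^e_x,N^h_x),
\]
where the final equality uses the Euclidean splitting $\nabla^e_X Y=\overline{\nabla}^e_X Y+\hat{h}^e_x(X,Y)\, N^e_x$ and again discards the tangential summand. It then remains to evaluate the scalar $g^h_x(N^e_x,N^h_x)$ by substituting (\ref{eqn:normals}) and using the explicit identities $g^h_x(N^e_x,N^e_x)=[1-\|x\|^2+(N^e_x\cdot x)^2]/(1-\|x\|^2)^2$ and $g^h_x(N^e_x,x)=(N^e_x\cdot x)/(1-\|x\|^2)^2$; the cross terms collapse to yield $[(1-\|x\|^2)(1-(N^e_x\cdot x)^2)]^{-1/2}$, matching the asserted scaling factor.

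Part (c) is then purely algebraic bookkeeping: $\det\hat{h}^h_x$ picks up the factor $[(1-\|x\|^2)(1-(N^e_x\cdot x)^2)]^{-(n-1)/2}$ relative to $\det\hat{h}^e_x$, while from (a) the ratio of metric determinants in a common basis is $(1-\|x\|^2)^{-n}(1-(N^e_x\cdot x)^2)$; dividing produces the stated factor $(1-\|x\|^2)^{(n+1)/2}(1-(N^e_x\cdot x)^2)^{-(n+1)/2}$. The main obstacle is the connection identity underlying (b): while the final formula is clean and symmetric in $X,Y$, establishing it requires a careful Christoffel symbol calculation for the rank-one modification of the Euclidean metric. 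Everything else is essentially the observation that the correction term $\nabla^h-\nabla^e$ is tangent to the hypersurface, so that the two second fundamental forms differ only by the single scalar $g^h(N^e,N^h)$ read off from (\ref{eqn:normals}).
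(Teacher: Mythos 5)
Your proposal is correct and follows essentially the same architecture as the paper: compute $\det\hat g^h$ via the rank-one-perturbation structure, reduce $\hat h^h/\hat h^e$ to the single scalar $g^h_x(N^e_x,N^h_x)$ by means of the connection identity $\nabla^h_X Y=\nabla^e_X Y+(1-\|x\|^2)^{-1}[(X\cdot x)Y+(Y\cdot x)X]$, and then divide determinants. The one genuine methodological difference is how that connection identity is justified: the paper invokes a theorem of Eisenhart on projectively equivalent metrics (the two metrics share pre-geodesics, which forces the correction to have the form $\rho(X)Y+\rho(Y)X$ with a closed $1$-form $\rho$ determined by $\log\det g$), whereas you propose a direct Christoffel-symbol calculation using the explicit inverse metric $g^{ij}=(1-\|x\|^2)(\delta^{ij}-x^ix^j)$. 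The Eisenhart route is slicker and pins down $\rho$ almost for free; your route is more elementary and self-contained but costs a longer computation. Your observation that for $X,Y\in T_xM$ the correction term is tangent to $M$ and hence $g^h$-orthogonal to $N^h_x$, so that $\hat h^h=\hat h^e\,g^h(N^e,N^h)$, is exactly the clean way to carry out what the paper compresses into ``a straightforward calculation shows''.
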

\begin{proof}
	Let $g^*$ be a Riemannian metric tensor of $\B^n$. We identify $T_xM$ with the $n-1$ dimensional subspace 
$\{X_x\in T_x\B^n\cong \R^n: X_x\cdot N_x^e=0\}$. For $X_x, Y_x\in T_xM$, the induced metric tensor 
$\hat{g}^*$ is determined by $\hat{g}^*_x(X_x,Y_x) = g^*_x(X_x,Y_x)$. In particular, for $g^h$ we have
	\begin{align*}
		\hat{g}^h_x(X_x,Y_x) = \frac{X_x\cdot Y_x}{1-\|x\|^2} + \frac{ (x\cdot X_x)(x\cdot Y_x) 
}{(1-\|x\|^2)^2}.
	\end{align*}
	We put $\overline{x} = x - (N_x^e\cdot x) N_x^e$. Then, for $X_x\in T_x M$, we have that $x\cdot X_x= 
\overline{x} \cdot X_x$ and $\|\overline{x}\| = \sqrt{\|x\|^2-(N_x^e\cdot x)^2}$.
	We define the matrix $A= (1-\|x\|^2)^{-1}\mathrm{Id}_n+(1-\|x\|^2)^{-2}\overline{x} \overline{x}^\top$ and 
obtain
	\begin{align*}
		A\frac{\overline{x}}{\|\overline{x}\|} = \frac{\overline{x}}{\|\overline{x}\|(1-\|x\|^2)} + 
\frac{\|\overline{x}\|^2 \overline{x}}{\|\overline{x}\|(1-\|x\|^2)^2} = \frac{1-(N_x^e\cdot 
x)^2}{(1-\|x\|^2)^2} \frac{\overline{x}}{\|\overline{x}\|}.
	\end{align*}
	For $v\in \overline{x}^\bot$, we have that $Av = (1-\|p\|^2)^{-1}v$.
	By definition of $A$, $\hat{g}^h_x(X_x,Y_x)=X_x^\top A Y_x$. We conclude that
	\begin{align*}
		\det \hat{g}^h_x = \det A = (1-\|x\|^2)^{-n}(1-(N_x^e\cdot x)^2).
	\end{align*}
	
	We know that $(\B^n,g^h)$ and $(\B^n,g^e)$ have the same (pre-)geodesics. This implies, see e.g.\ 
\cite{Eisenhart:1997}*{(40.7)}, that there is a function $\psi$ such that
	\begin{align*}
		\frac{\partial \log\det(g^h)}{\partial x^i} = \frac{\partial \log\det(g^e)}{\partial x^i} + 
2(n+1)\frac{\partial \psi}{\partial x^i}.
	\end{align*}
	Since $\log\det(g^h) = -(n+1)\log\left(1-\|x\|^2\right)$ and $\det(g^e)=1$, we conclude that
	$\frac{\partial \psi}{\partial x^i} = (1-\|x\|^2)^{-1}x_i$.
	Consequently, see e.g.\ \cite{Eisenhart:1997}*{(40.6)}, for the $1$-form $\rho$ defined by
	\begin{align}\label{eqn:rhorel}
		\rho(X_x) = (1-\|x\|^2)^{-1} (x\cdot X_x),
	\end{align}
	the covariant derivative with respect to $g^h$ can be written as
	\begin{align}\label{eqn:nablarel}
		\nabla^h_{X}Y = \nabla^e_X Y + \rho(X)Y+\rho(Y)X.
	\end{align}
	Combining $(\ref{eqn:normals})$, $(\ref{eqn:rhorel})$ and $(\ref{eqn:nablarel})$, a straightforward 
calculation shows that
	\begin{align*}
		g^h_x\left((\nabla^h_X Y)_x, N^h_x\right) = \left(1-\|x\|^2\right)^{-1/2}\left(1-(x\cdot 
N_x^e)^2\right)^{-1/2}\left((\nabla^e_X Y)_x\cdot N^e_x\right).
	\end{align*}
	This concludes the proof, since $\hat{h}^*_x(X_x,Y_x)=g^*_x((\nabla^*_X Y)_x,N^*_x)$ and 
(\ref{eqn:gaussstub}) follows from $\det S_x^h = \det(\hat{h}^h_x)/\det(\hat{g}_x^h)$.
\end{proof}

An immediate consequence of this theorem is, that for smooth convex bodies the hyperbolic Gauss--Kronecker 
curvature and the Euclidean Gauss--Kronecker curvature are related by (\ref{eqn:gaussstub}). This can be 
generalized to general convex bodies with the usual methods: For $K\in \K_0(\B^n)$ we call a boundary point 
$x\in\bd\, K$ \emph{normal}, if $\mathrm{bd}\, K$ at $x$ can locally be expressed as the graph of a convex 
function that is second order differentiable in $x$, see e.g.\ \cite{Hug:1996}*{p.\ 4}.
Hence, in a normal boundary point the Gauss--Kronecker curvature is defined and since almost all boundary 
points are normal, see e.g.\ \cite{Schneider:2014}*{Thm.\ 2.5.5}, we obtain a generalized notion of hyperbolic 
Gauss--Kronecker curvature $H_{n-1}^h(K,x)$ for arbitrary convex bodies $K\in\K_0(\B^n)$.

\begin{corollary}\label{cor:GaussKronecker}
	Let $K\in\K_0(\B^n)$. In a normal boundary point $x\in\bd\, K$, we have
	\begin{align}\label{eqn:gauss}
		H_{n-1}^h(K,x) = H_{n-1}^e(K,x) \left(\frac{1-\|x\|^2}{1-(N_x^e\cdot x)^2}\right)^{(n+1)/2}.
	\end{align}
\end{corollary}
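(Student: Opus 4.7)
The plan is to reduce the statement to the smooth hypersurface case already handled in Theorem \ref{thm:gauss}, by exploiting that both the Euclidean and the hyperbolic Gauss--Kronecker curvatures at a normal boundary point are genuinely local, second-order invariants.

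First I would recall the definition: at a normal boundary point $x\in\bd\,K$, after an orthogonal change of Euclidean coordinates so that $N_x^e$ points along the last axis, there is a neighborhood of $x$ in which $\bd\,K$ is the graph of a convex function $f$ that admits a second-order Taylor expansion at the projection of $x$. In particular, the Hessian of $f$ exists at that point, and hence the Euclidean second fundamental form $\hat h^e_x$, the induced metric $\hat g^e_x$, and the Euclidean Gauss--Kronecker curvature $H_{n-1}^e(K,x)=\det S^e_x$ are all defined at $x$ in the usual way, even though they need not be defined in a full neighborhood on $\bd\,K$. The hyperbolic analogues $\hat g^h_x$, $\hat h^h_x$, $S^h_x$, and $H_{n-1}^h(K,x)$ are defined by the same pointwise formulas in terms of $g^h$ and $\nabla^h$.

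The main step is then to observe that every identity appearing in the proof of Theorem \ref{thm:gauss} is pointwise in $x$ and depends only on the value of the graph function and its first two derivatives at $x$, not on smoothness of $\bd\,K$ elsewhere. Concretely, the formula $\det\hat g^h_x=(1-\|x\|^2)^{-n}(1-(N_x^e\cdot x)^2)$ uses only $x$ and $N_x^e$; the identity $\nabla^h_XY=\nabla^e_XY+\rho(X)Y+\rho(Y)X$ with $\rho(X)=(1-\|x\|^2)^{-1}(x\cdot X)$ holds at the point $x$ whenever $X,Y$ are extended as smooth vector fields with the prescribed first-order data; and the resulting relation
\begin{align*}
\hat h^h_x=\hat h^e_x\,(1-\|x\|^2)^{-1/2}(1-(N_x^e\cdot x)^2)^{-1/2}
\end{align*}
follows by a pointwise algebraic manipulation using (\ref{eqn:normals}). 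Consequently the proof of (\ref{eqn:gaussstub}) in Theorem \ref{thm:gauss} applies verbatim at a normal boundary point of $K$, yielding (\ref{eqn:gauss}).

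Alternatively, and perhaps more transparently, one can approximate: choose a smooth convex hypersurface $M$ (for instance the osculating paraboloid of $\bd\,K$ at $x$) having second-order contact with $\bd\,K$ at $x$. Then $x\in M$, the Euclidean unit normals and second fundamental forms of $M$ and of $\bd\,K$ agree at $x$, and hence $H_{n-1}^e(M,x)=H_{n-1}^e(K,x)$; the same agreement holds for the hyperbolic Gauss--Kronecker curvatures because these depend only on the same second-order jet together with the pointwise metric data $g^h_x$, $\nabla^h|_x$. Applying Theorem \ref{thm:gauss} to the smooth hypersurface $M$ at $x$ and substituting yields (\ref{eqn:gauss}). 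The only genuine issue is to verify that normality of $x$ suffices to guarantee second-order agreement in the sense just used; this is the content of the Aleksandrov-type differentiability behind the notion of a normal boundary point, cf.\ \cite{Schneider:2014}*{Thm.\ 2.5.5}, and is the mildly delicate point of the argument.
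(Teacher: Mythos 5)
Your proposal is correct and fills in exactly the step the paper leaves implicit: the paper does not give a proof of the corollary at all, but simply invokes ``the usual methods'' to pass from the smooth Theorem~\ref{thm:gauss} to the case of a normal boundary point of a general convex body. Both of your arguments (reading the proof of Theorem~\ref{thm:gauss} as a pointwise identity in the second-order jet at $x$, or replacing $\bd\,K$ locally by its osculating paraboloid, which shares the same second-order Taylor data and hence the same generalized second fundamental form) are standard ways to make this precise, and they are what the authors mean by ``the usual methods''; you also correctly flag the Aleksandrov-type second-order differentiability as the only nontrivial ingredient.
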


\bigskip
The following proposition is well-known, see e.g. \cite{Schuett:1990}*{Lem.\ 3}. It is a change of variables 
formula, where we switch from integration in Cartesian coordinates to integration along rays from the origin 
with the directions parametrized by the boundary of a convex body.
\begin{proposition}[{Euclidean cone volume formula, see \cite{Schuett:1990}*{Lem.\ 3}}]\label{prop:econe}
	Let $K,L\in\K_0(\R^n)$ such that $L\subseteq K$ and $0\in\inter\, L$. For $x\in\bd\, K$ we set $\{x_L\} = 
\bd\, L \cap \mathrm{pos}\{x\}$.
	Then
	\begin{align*}
		\vol_n^e(K\backslash L) = \int\limits_{\bd\, K} \frac{h_K(N_x^e)}{\|x\|^n} 
\int\limits_{\|x_L\|}^{\|x\|} t^{n-1}\, dt\, d\vol_{bd\, K}^e(x).
	\end{align*}
\end{proposition}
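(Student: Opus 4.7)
The plan is to change to polar coordinates centered at the origin. Since $0\in\inter L\subseteq \inter K$, every ray $\{t\omega : t\ge 0\}$, $\omega\in\S^{n-1}$, meets $\bd K$ and $\bd L$ in unique points, and the portion of the ray lying in $K\setminus L$ is exactly the segment with $\|x_L(\omega)\|\le t\le \|x(\omega)\|$, where $x(\omega)$ is the radial image of $\omega$ on $\bd K$. Using the spherical volume element $d\vol_n^e(y)=t^{n-1}\,dt\,d\omega$ for $y=t\omega$, Fubini immediately yields
\begin{align*}
\vol_n^e(K\setminus L) = \int_{\S^{n-1}} \int_{\|x_L(\omega)\|}^{\|x(\omega)\|} t^{n-1}\, dt\, d\omega.
\end{align*}

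The key step is to push the outer integral from $\S^{n-1}$ onto $\bd K$ via the radial projection $x\mapsto x/\|x\|$, which, since $0\in\inter K$, is a bi-Lipschitz bijection $\bd K\to\S^{n-1}$. At a regular boundary point $x$ the outer unit normal $N_x^e$ exists and the supporting-hyperplane identity gives $x\cdot N_x^e = h_K(N_x^e)$. A standard oblique-projection argument, comparing the area elements on $\bd K$ and on the sphere of radius $\|x\|$ cut out by the same infinitesimal cone of solid angle $d\omega$, produces
\begin{align*}
d\vol_{\bd\, K}^e(x) = \frac{\|x\|^{n-1}}{(x/\|x\|)\cdot N_x^e}\, d\omega = \frac{\|x\|^n}{h_K(N_x^e)}\, d\omega.
\end{align*}
Non-regular boundary points form a $\vol_{\bd\, K}^e$-null set by \cite{Schneider:2014}*{Thm.\ 2.2.5}, so this identity holds almost everywhere and can be substituted into the previous display; solving for $d\omega$ and substituting delivers the claimed formula.

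The one technical obstacle is making the Jacobian computation rigorous for an arbitrary $K\in\K_0(\R^n)$, whose boundary is only Lipschitz. The cleanest route is to first verify the identity for convex bodies with $C^2_+$ boundary, where $\omega\mapsto r(\omega)\omega$ is a diffeomorphism and the computation is classical, and then approximate a general $K$ by such bodies in the Hausdorff distance, noting that both sides of the claim are continuous in $K$ (with $L$ held fixed, the inner bounds $\|x_L(\omega)\|,\|x(\omega)\|$ depend continuously on $\omega$). Alternatively one can apply the area formula for Lipschitz mappings directly to the radial bijection. With the change of variables established, the inner $t$-integration requires no further work and matches the right-hand side of the statement verbatim.
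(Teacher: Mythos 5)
Your proof is correct and follows exactly the route the paper indicates: the paper states this as a well-known change-of-variables formula (citing Sch\"utt--Werner, Lemma~3) and does not reproduce a proof, but describes it in one sentence as passing from Cartesian coordinates to integration along rays from the origin parametrized by $\bd\, K$. Your polar-coordinate decomposition, the supporting-hyperplane identity $x\cdot N_x^e = h_K(N_x^e)$, and the Jacobian
\begin{align*}
d\vol_{\bd\, K}^e(x) = \frac{\|x\|^{n-1}}{(x/\|x\|)\cdot N_x^e}\,d\omega
\end{align*}
of the radial bijection $\bd\, K \leftrightarrow \S^{n-1}$ are precisely the standard ingredients. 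One small remark on your final paragraph: you present smooth approximation as the cleanest rigor route and the Lipschitz area formula as an alternative, but I would reverse that ordering. The approximation argument requires justifying that the boundary integral on the right-hand side is continuous along a Hausdorff-convergent sequence $K_j\to K$, and since the integrand lives on the varying hypersurface $\bd\, K_j$ and depends on both the position $x$ and the normal $N_x^e$, that continuity is not immediate; it amounts to weak convergence of the relevant support (curvature) measures, which is heavier machinery than the statement warrants. By contrast, applying the area formula for Lipschitz maps directly to the bi-Lipschitz radial parametrization $\omega\mapsto r(\omega)\omega$ of $\bd\, K$ (using that regular boundary points have full $\vol_{\bd\, K}^e$-measure, so the Jacobian you computed is defined almost everywhere) closes the argument in one step with no approximation needed.
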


There is an analog of the above in hyperbolic convex geometry.
\begin{proposition}[Hyperbolic cone volume formula]\label{lem:hyperconevol}
	Let $K,L\in \K_0(\B^n)$ such that $L\subseteq K$ and $0\in\inter\, L$. For $x\in\bd\, K$ we set $\{x_L\} = 
\bd\, L \cap \mathrm{pos}\{x\}$.
	Then
	\begin{align}\label{eqn:intform1}
		\vol_n^h(K\backslash L) = \int\limits_{\bd\, K} \frac{\sinh(h^h_0(K,N_x^e))}{\sinh(d_h(x,0))^n} 
\int\limits_{d_h(0,x_L)}^{d_h(0,x)} \sinh(t)^{n-1}\, dt\, d\vol_{\bd\, K}^h(x).
	\end{align}
\end{proposition}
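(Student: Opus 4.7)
The plan is to compute $\vol_n^h(K\setminus L)$ by introducing hyperbolic polar coordinates at the origin and then switching the integration variable from the unit sphere to $\bd\, K$ via radial projection. Concretely, I would first use that in the projective model the hyperbolic exponential at $0$ satisfies $\exp_0^h(tu)=\tanh(t)u$ by (\ref{eqn:exph}), so substituting $x=\tanh(t)u$ for $u\in\S^{n-1}$ and $t>0$ in (\ref{eqn:volh}) and computing the Euclidean Jacobian yields the familiar polar decomposition of hyperbolic volume, $d\vol_n^h=\sinh(t)^{n-1}\,dt\,d\omega(u)$, where $d\omega$ is the standard measure on $\S^{n-1}$.

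Next I would exploit the assumption $0\in\inter\,L\subseteq K$: every ray from the origin in direction $u\in\S^{n-1}$ meets $\bd\,L$ and $\bd\, K$ in unique points $x_L$ and $x$, both lying on $\mathrm{pos}\{u\}$, and the points of $K\setminus L$ on that ray are exactly those at hyperbolic distance $t\in[d_h(0,x_L),d_h(0,x)]$ from $0$. Fubini in polar coordinates therefore gives
\begin{align*}
\vol_n^h(K\setminus L)=\int_{\S^{n-1}}\int_{d_h(0,x_L)}^{d_h(0,x)}\sinh(t)^{n-1}\,dt\,d\omega(u).
\end{align*}

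The heart of the proof is then the change of variables $\S^{n-1}\to\bd\, K$, $u\mapsto x$, with $u=x/\|x\|$. For this I would combine two ingredients already at hand: first, the Euclidean radial Jacobian (implicit in Proposition \ref{prop:econe}), which together with $h_K(N_x^e)=x\cdot N_x^e$ for a regular boundary point gives
$d\omega(u)=\frac{h_K(N_x^e)}{\|x\|^n}\,d\vol_{\bd\, K}^e(x)$;
and second, the relation (\ref{eqn:volbd}) between the Euclidean and hyperbolic boundary measures,
$d\vol_{\bd\, K}^e(x)=\sqrt{\frac{(1-\|x\|^2)^n}{1-(x\cdot N_x^e)^2}}\,d\vol_{\bd\, K}^h(x)$.
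Substituting the second into the first converts $d\omega(u)$ into a multiple of $d\vol_{\bd\, K}^h(x)$.

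All that remains is to recognize that the resulting factor equals $\sinh(h_0^h(K,N_x^e))/\sinh(d_h(x,0))^n$. This follows from the elementary identity $\sinh(\mathrm{arctanh}\,s)=s/\sqrt{1-s^2}$ together with $\tanh h_0^h(K,N_x^e)=h_K(N_x^e)$ (Lemma \ref{lem:hsupport}) and $\tanh d_h(x,0)=\|x\|$ (equation (\ref{eqn:dh})), so that
\begin{align*}
\frac{h_K(N_x^e)}{\|x\|^n}\sqrt{\frac{(1-\|x\|^2)^n}{1-(x\cdot N_x^e)^2}}=\frac{\sinh(h_0^h(K,N_x^e))}{\sinh(d_h(x,0))^n}.
\end{align*}
Assembling the three steps yields (\ref{eqn:intform1}). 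The only potential obstacle is a bookkeeping one: regular boundary points form a set of full measure on $\bd\, K$, so all identities involving $N_x^e$ hold almost everywhere with respect to both boundary measures, which justifies integrating them over $\bd\, K$; no harder analytic issue appears.
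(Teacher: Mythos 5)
Your proof is correct and essentially follows the same route as the paper. The paper applies the Euclidean cone volume formula (Proposition \ref{prop:econe}) directly to the integral defining $\vol_n^h$ with its density $(1-\|x\|^2)^{-(n+1)/2}$, then substitutes $t=\tanh s$ in the inner integral and converts $d\vol_{\bd\,K}^e$ to $d\vol_{\bd\,K}^h$; you instead pass through an explicit hyperbolic polar decomposition on $\S^{n-1}$ and then change variables to $\bd\, K$, but the three ingredients---radial Jacobian, the substitution reducing the radial density to $\sinh^{n-1}$, and the boundary-measure relation (\ref{eqn:volbd}) combined with Lemma \ref{lem:hsupport} and (\ref{eqn:dh})---are identical, just applied in a slightly different order.
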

\begin{proof}
	By Lemma \ref{lem:hsupport}, Proposition \ref{prop:econe} and (\ref{eqn:dh}),
	\begin{align*}
		\vol_n^h(K\backslash L) &= \int\limits_{\bd\, K} \frac{\tanh(h^h_0(K,N_x^e))}{\tanh(d_h(x,0))^n} 
\int\limits_{\tanh(d_h(x_L,0))}^{\tanh(d_h(x,0))} \frac{t^{n-1}}{(1-t^2)^{(n+1)/{2}}} \, dt\, d\vol_{\bd\, 
K}^e(x)\\
		&= \int\limits_{\bd\, K} \frac{\tanh(h^h_0(K,N_x^e))}{\tanh(d_h(x,0))^n} 
\int\limits_{d_h(x_L,0)}^{d_h(x,0)} \sinh(t)^{n-1}\,dt\, d\vol_{\bd\, K}^e(x).
	\end{align*}
	Since $h_K(N_x^e) = x\cdot N_x^e$ and by (\ref{eqn:dh}) and (\ref{eqn:volbd}), we have that
	\begin{align*}
		d\vol_{\bd\, K}^h(x)
		&= \frac{\cosh(d_h(x,0))^n}{\cosh(h^h_0(K,N_x^e))}\, d\vol_{\bd\, K}^e. \qedhere
	\end{align*}
\end{proof}

\subsection{A Euclidean model for real space forms}

Similar to the projective model, we may define a Euclidean model for space forms $\Sp^n(\lambda)$ of arbitrary 
curvature $\lambda$. 
Let
\begin{equation*}
	\B^n(\lambda) := \begin{cases}
	                 	\left(1/\sqrt{-\lambda}\right)\B^n & \text{ if $\lambda <0$,}\\
	                 	\R^n & \text{else}.
	                 \end{cases}
\end{equation*}
Further, define a Riemannian metric $g^\lambda$ on $\B^n(\lambda)$ by
\begin{equation}
	g^\lambda(X_p,Y_p) = \frac{X_p\cdot Y_p}{1+\lambda\|p\|^2} - \lambda\frac{(X_p\cdot p)(Y_p\cdot 
p)}{(1+\lambda\|p\|^2)^2}, \quad X_p,Y_p\in T_p\B^n(\lambda).
\end{equation}
Then $(\B^n(\lambda),g^\lambda)$ is a Riemannian manifold of constant sectional curvature $\lambda$.
By the Killing-Hopf Theorem there is, up to isometry, only one simply-connected and complete Riemannian 
manifold $\Sp^n(\lambda)$ of constant sectional curvature $\lambda\in\R$, see e.g.\ 
\cite{Kobayashi:1963}*{Ch.\ 6}, \cite{Lee:1997}*{Thm.\ 1.9} or \cite{ONeill:1983}*{Ch.\ 8, Cor.\ 25}.
Thus, for $\lambda \leq 0$, $(\B^n(\lambda), g^\lambda)$ is isometric to $\Sp^n(\lambda)$ and for $\lambda>0$, 
$(\B^n(\lambda),g^\lambda)$ is isometric to an open hemisphere of $\Sp^n(\lambda)$.

Euclidean straight lines intersected with $\B^n(\lambda)$ are geodesics in $(\B^n(\lambda),g^\lambda)$. 
Therefore the set of geodesically convex bodies in $(\B^n(\lambda),g^\lambda)$ is equivalent to 
$\K^n(\B^n(\lambda))$, i.e. the Euclidean convex bodies contained in $\B^n(\lambda)$. Note that in the 
spherical setting, $\lambda>0$, we define proper convex bodies as convex bodies contained in an open 
hemisphere. Hence, when investigating a fixed proper convex body $K\in\K(\Sp^n(\lambda))$, we may use the 
model $(\B^n(\lambda), g^\lambda)$ and identify $K$ with a convex body in $\K(\B^n(\lambda))$.

It is useful to define
\begin{align*}
	\tan^\lambda \alpha = \begin{cases}
	                        	\tanh\big(\sqrt{-\lambda}\alpha\big)/\sqrt{-\lambda} & \text{if $\lambda 
<0$},\\
	                        	\alpha & \text{if $\lambda = 0$},\\
	                        	\tan\big(\sqrt{\lambda}\alpha\big)/\sqrt{\lambda} & \text{if $\lambda >0$.}
	                        \end{cases}
\end{align*}
Then the geodesic distance $d_\lambda$ between a point $p\in\B^n(\lambda)$ and the origin is given by
\begin{align}\label{eqn:dlambda}
	\tan^\lambda d_\lambda(p,0) = d^e(p,0) = \|p\|.
\end{align}
For a geodesic ball $B^n_\lambda(0,\alpha)$ with center at the origin and geodesic radius $\alpha$ we have 
$B^n_\lambda(0,\alpha) = B^n_e(0,\tan^\lambda \alpha)$, i.e., geodesic balls with center at the origin are 
Euclidean balls.

The volume element in $(\B^n, g^\lambda)$ is 
\begin{equation}\label{eqn:lvol}
	d\vol_n^\lambda(p) = (1+\lambda\|p\|^2)^{-(n+1)/2}\, d\vol_n^e(p).
\end{equation}

For a convex body $K\subset \B^n(\lambda)$ we define a support function $h^\lambda_p(K,.)$ with respect to a 
fixed point $p\in\B^n(\lambda)$ similar to Definition \ref{def:hsupport}. If $p=0$, then 
\begin{align}\label{eqn:supportlambda}
	\tan^\lambda h^\lambda_0(K,u) = h_K(u), \quad u\in \S^{n-1}.
\end{align}

For a fixed convex body $K\in\K_0(\B^n(\lambda))$ and a regular boundary point $x\in\bd\, K$ we can compare 
the outer unit vector $N^\lambda_x$ with respect to $g^\lambda$ with the Euclidean outer unit normal. 
Analogous to Lemma \ref{lem:hnormal} we find that
\begin{equation}\label{eqn:lnormal}
	N_p^\lambda = \sqrt{\frac{1+\lambda\|x\|^2}{1+\lambda(x\cdot N_x^e)^2}} \left(N_x^e+\lambda(x\cdot 
N_x^e)x\right).
\end{equation}
This implies that
\begin{equation}\label{eqn:lboundary}
	d\vol_{\bd\, K}^\lambda(x) = \sqrt{\frac{1+\lambda(N_x^e\cdot x)^2}{(1+\lambda\|x\|^2)^{n}}} d\vol_{\bd\, 
K}^e(x).
\end{equation}
Finally, we can also adapt Theorem \ref{thm:gauss} and conclude that for normal boundary points $x\in\bd\, K$,
\begin{equation}\label{eqn:lgauss}
	H_{n-1}^\lambda(K,x) =  H_{n-1}^e(K,x) \left(\frac{1+\lambda\|x\|^2}{1+\lambda(N_x^e\cdot 
x)^2}\right)^{(n+1)/2}.
\end{equation}

For $\lambda=1$, we already obtained (\ref{eqn:lvol}), (\ref{eqn:supportlambda}), (\ref{eqn:lboundary}) and 
(\ref{eqn:lgauss}) in \cite{Besau:2015a}*{ (4.8), (4.3), (4.11) and (4.13)}.

\section{The Floating Body in Real Space Forms}

For a convex body $K\in\K_0(\Sp^n(\lambda))$ and $\delta>0$, we define the $\lambda$-floating body by
\begin{align}\label{def:lfloat}
	\F^\lambda_{\delta}\, K = \bigcap\left\{H^- : \vol_n^\lambda(K\cap H^+)\leq \delta^{\frac{n+1}{2}} 
\right\}.
\end{align}
In the Euclidean model $(\B^n(\lambda),g^\lambda)$, the $\lambda$-floating body is a weighted floating body \cite{Werner:2002}, 
that is, by (\ref{eqn:lvol}), we have $\F_\delta^\lambda\, K = \F_\delta^\mu\, K$ for $\mu=\vol_n^\lambda$.
Note that for $\lambda=0$, we obtain the well known Euclidean (convex) floating body $\F^e_\delta\, K$, see 
e.g.\ \cite{Schuett:1990}. For $\lambda=1$, we obtain the spherical floating body $\F^s_\delta\, K$ introduced 
in \cite{Besau:2015a}. Finally, for $\lambda = -1$ we obtain the new notion of hyperbolic floating body 
$\F^h_\delta\, K$.

By Proposition \ref{prop:euclpar} we have that
\begin{align}\label{eqn:lfloatpar}
	\F^\lambda_{\delta}\, K = \left[h_K-s_\delta^\lambda\right] = \bigcap_{v\in\S^{n-1}} 
H_{0,v,h_K(v)-s_\delta^\lambda(v)}^-,
\end{align}
where $s_\delta^\lambda$ is determined by
\begin{align*}
	\delta^{\frac{n+1}{2}} = \vol_n^\lambda\left( K \cap H^+_{0,v,h_K(v)-s_\delta^\lambda(v)} \right).
\end{align*}
	
The $\lambda$-floating body can be bounded by the Euclidean (convex) floating body in the following way.
\begin{lemma}\label{lem:lfloatbound}
	Let $K\in\K_0(\B^n(\lambda))$, $p\in\inter\, K$ and $0\leq \alpha < \beta$ be such that 
$B^n_\lambda(p,\alpha)\subset K \subseteq B^n_\lambda(p,\beta)$.
	We set
	\begin{align*}
		\delta_1 &:= \delta\left(1+\lambda \tan^\lambda\left(d_{\lambda}(0,p)-\alpha\right)^2\right),
		&
		\delta_2 &:= \delta\left(1+\lambda \tan^\lambda\left(d_{\lambda}(0,p)+\beta\right)^2\right).
	\end{align*}
	If $\delta>0$ is small enough so that $B^n_\lambda(p,\alpha)\subseteq \F^\lambda_\delta\, K$, then 
	\begin{align}\label{eqn:floatbound}
		\begin{cases}
			\F^e_{\delta_1}\, K \subseteq \F^{\lambda}_{\delta}\, K \subseteq \F^e_{\delta_2}\, K & \text{ if 
$\lambda <0$,}\\
			\F^e_{\delta_2}\, K \subseteq \F^{\lambda}_{\delta}\, K \subseteq \F^e_{\delta_1}\, K & \text{ if 
$\lambda >0$.}
		\end{cases}
	\end{align}
\end{lemma}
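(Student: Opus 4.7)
The plan is to reduce both inclusions in (\ref{eqn:floatbound}) to a pointwise comparison of the two volume densities on caps $H^+\cap K$ that are relevant for the $\lambda$-floating body. By (\ref{eqn:lvol}), the Radon--Nikodym derivative of $\vol_n^\lambda$ with respect to $\vol_n^e$ equals $(1+\lambda\|x\|^2)^{-(n+1)/2}$, so everything comes down to two-sided bounds on $\|x\|^2$ over such caps.

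First I would establish the upper bound. For every $x\in K\subseteq B^n_\lambda(p,\beta)$, the $\lambda$-triangle inequality gives $d_\lambda(x,0)\leq d_\lambda(0,p)+\beta$, and by (\ref{eqn:dlambda}) this translates to $\|x\|^2\leq \tan^\lambda(d_\lambda(0,p)+\beta)^2$. This is the bound that, after being inserted into the density, will produce the factor associated with $\delta_2$.

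Next I would establish the lower bound on caps. The hypothesis $B^n_\lambda(p,\alpha)\subseteq \F^\lambda_\delta\, K$ means that for every half-space $H^-$ in the defining family of $\F^\lambda_\delta\, K$, the inner $\lambda$-ball $B^n_\lambda(p,\alpha)$ lies in $H^-$; in particular $0\in H^-$ in the relevant situation where $0\in B^n_\lambda(p,\alpha)$, and the hyperplane $H$ has $\lambda$-distance at least $\alpha$ from $p$. Using the $1$-Lipschitz property of the function $q\mapsto d_\lambda(q,H)$, I get $d_\lambda(0,H)\geq d_\lambda(p,H)-d_\lambda(0,p)\geq \alpha - d_\lambda(0,p)$. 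For any $x\in H^+$, the geodesic segment from $0$ to $x$ must cross $H$, so $d_\lambda(x,0)\geq d_\lambda(0,H)\geq |d_\lambda(0,p)-\alpha|$; using (\ref{eqn:dlambda}) once more I conclude $\|x\|^2\geq \tan^\lambda(d_\lambda(0,p)-\alpha)^2$.

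Combining both bounds and tracking the sign of $\lambda$ then gives, on every such cap,
\begin{align*}
A\,\vol_n^e(H^+\cap K)\ \leq\ \vol_n^\lambda(H^+\cap K)\ \leq\ B\,\vol_n^e(H^+\cap K),
\end{align*}
where $A$ and $B$ are precisely the constants needed so that $\vol_n^\lambda(H^+\cap K)\leq\delta^{(n+1)/2}$ is implied by (respectively implies) $\vol_n^e(H^+\cap K)\leq\delta_1^{(n+1)/2}$, resp.\ $\delta_2^{(n+1)/2}$. Translating these measure implications into inclusions of the intersections defining the floating bodies yields (\ref{eqn:floatbound}) directly: for $\lambda<0$ the upper density bound gives $\F^\lambda_\delta\, K\subseteq \F^e_{\delta_2}\, K$ and the lower one gives $\F^e_{\delta_1}\, K\subseteq \F^\lambda_\delta\, K$, while for $\lambda>0$ these roles interchange because the density $(1+\lambda\|x\|^2)^{-(n+1)/2}$ is now decreasing rather than increasing in $\|x\|^2$.

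The hard step is the geometric lower bound: one has to argue that the separation of the cap from the inner $\lambda$-ball, combined with the position of the origin inside that inner ball, forces every point of the cap to lie at $\lambda$-distance at least $|d_\lambda(0,p)-\alpha|$ from the origin. Once this is in hand, the sign book-keeping for $\lambda\gtrless 0$, the trivial inequalities $\tan^\lambda|d_\lambda(0,p)-\alpha|\leq \tan^\lambda(d_\lambda(0,p)+\beta)$, and the translation between measure inequalities on caps and inclusions of the Wulff-shape representations from Proposition~\ref{prop:euclpar} are all routine.
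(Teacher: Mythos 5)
Your strategy matches the paper's: parametrize both floating bodies as Wulff shapes via Proposition~\ref{prop:euclpar}, compare $\vol_n^\lambda$ and $\vol_n^e$ on the cutting caps through the density $(1+\lambda\|x\|^2)^{-(n+1)/2}$, and bound $\|x\|$ over the caps using the inner and outer balls $B^n_\lambda(p,\alpha)$ and $B^n_\lambda(p,\beta)$. The upper bound and the resulting $\delta_2$ inclusion are exactly as in the paper. For the lower bound you route through the distance $d_\lambda(0,H)$ to the cutting hyperplane, a clean reformulation of the paper's step; both give $d_\lambda(0,q)\geq\alpha-d_\lambda(0,p)$.

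The point to flag is that this lower bound is only meaningful when $d_\lambda(0,p)\leq\alpha$, i.e.\ when $0\in B^n_\lambda(p,\alpha)$; your phrase ``in the relevant situation where $0\in B^n_\lambda(p,\alpha)$'' is doing real work and is \emph{not} a hypothesis of the lemma as stated. If $d_\lambda(0,p)>\alpha$, the bound $d_\lambda(0,q)\geq\alpha-d_\lambda(0,p)$ is vacuous, whereas the $\delta_1$ inclusion requires the stronger $d_\lambda(0,q)\geq d_\lambda(0,p)-\alpha$, which does not follow from $d_\lambda(q,p)\geq\alpha$ alone (a point $q$ with $\alpha\leq d_\lambda(q,p)\leq d_\lambda(0,p)$ can lie arbitrarily close to the origin). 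The paper's own proof makes the same assertion without justification, and the lemma is subsequently invoked in Lemma~\ref{lem:proof2} with $\alpha=0$ and $p\neq 0$, precisely the case neither argument covers. So either the lemma should carry the extra hypothesis $d_\lambda(0,p)\leq\alpha$ (which holds in Lemma~\ref{lem:proof1}, where $p=0$), or the application in Lemma~\ref{lem:proof2} needs a separate estimate; your write-up should make this restriction explicit rather than folding it into a parenthetical.
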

\begin{proof}
	It will be convenient to use the substitution $\tan^\lambda s(v,\delta) = h_K(v)-s_\delta^\lambda(v)$ in 
$(\ref{eqn:lfloatpar})$ to obtain
	\begin{align}\label{eqn:lfloatpar2}
		\F_\delta^\lambda \, K = \bigcap_{v\in\S^{n-1}} H^-_{0,v,\tan^\lambda s(v,\delta)},
	\end{align}
	where $s(v,\delta)$ is determined by
	\begin{align}\label{eqn:lfloatpar3}
		\delta^{\frac{n+1}{2}} = \vol_n^\lambda\left(K\cap H^+_{0,v,\tan^\lambda s(v,\delta)}\right).
	\end{align}
	Since $K \subseteq B^n_\lambda(p,\beta)$ and by (\ref{eqn:dlambda}), we conclude that, for all $q\in K$, 
$\| q\|\leq \tan^\lambda\left(d_{\lambda}(0,p)+\beta\right)$.
	This implies that
	\begin{align*}
		\begin{cases}
			(1+\lambda\|q\|^2)^{-1} \leq \cosh\left(\sqrt{-\lambda}(d_{\lambda}(0,p)+\beta)\right)^2 & 
\text{if $\lambda <0$,}\\
			(1+\lambda\|q\|^2)^{-1} \geq \cos\left(\sqrt{\lambda}(d_{\lambda}(0,p)+\beta)\right)^2 & \text{if 
$\lambda >0$.}
		\end{cases}
	\end{align*}
	Using this, $(\ref{eqn:dlambda})$ and $(\ref{eqn:lfloatpar3})$, we obtain
	\begin{align*}
		 \begin{cases}
		  	\delta^{\frac{n+1}{2}} \leq \cosh\left(\sqrt{-\lambda}(d_{\lambda}(0,p)+\beta)\right)^{n+1} 
\vol_n^e\left(K\cap H^+_{0,v,\tan^\lambda s(v,\delta)}\right) & \text{ if $\lambda <0$,}\\
		  	\delta^{\frac{n+1}{2}} \geq \cos\left(\sqrt{\lambda}(d_{\lambda}(0,p)+\beta)\right)^{n+1} 
\vol_n^e\left(K\cap H^+_{0,v,\tan^\lambda s(v,\delta)}\right) & \text{ if $\lambda >0$.}
		  \end{cases}
	\end{align*}
	For $\lambda <0$, let $t\left(v,\delta_2\right)$ be such that 
	\begin{align*}
		\delta_2^{\frac{n+1}{2}} = 
\left(\delta\cosh\left(\sqrt{-\lambda}(d_{\lambda}(0,p)+\beta)\right)^{-2}\right)^{\frac{n+1}{2}} = 
\vol^e_n\left(K\cap H^+_{0,v,t\left(v,\delta_2\right)}\right).
	\end{align*}
	Then $\tan^\lambda s(v,\delta) \leq t\left(v,\delta_2\right)$ and therefore
	\begin{align*}
		\F^\lambda_{\delta}\, K = [\tan^\lambda s(.,\delta)] \subseteq \left[t\left(.,\delta_2\right)\right] = 
\F^e_{\delta_2}\, K.
	\end{align*}
	For $\lambda >0$, an analogous argument gives $\tan^\lambda s(v,\delta) \geq t\left(v,\delta_2\right)$, 
which yields
	$\F^\lambda_\delta\, K\supseteq F^e_{\delta_2}\, K$.

	For the other inclusions we first note that $B^n_\lambda(p,\alpha)\subseteq \F_\delta^\lambda\, K 
\subseteq K$ implies
	$\|q\| \geq \left|\tan^\lambda\left(d_{\lambda}(0,p)-\alpha\right)\right|$, for all $q\in K\backslash 
\F_\delta^\lambda\, K$.
	By an argument analogous to the above, we find that
	\begin{align*}
			\begin{cases}
				\delta^{\frac{n+1}{2}}\geq \cosh\left(\sqrt{-\lambda}(d_{\lambda}(0,p)-\alpha)\right)^{n+1} 
\vol_n^e\left(K\cap H^+_{0,v,\tan^\lambda s(v,\delta)}\right) 
					&\text{ if $\lambda <0$,}\\
				\delta^{\frac{n+1}{2}}\leq \cos\left(\sqrt{\lambda}(d_{\lambda}(0,p)-\alpha)\right)^{n+1} 
\vol_n^e\left(K\cap H^+_{0,v,\tan^\lambda s(v,\delta)}\right) 
					&\text{ if $\lambda >0$.}
			\end{cases}
	\end{align*}
	Hence, we have that $\F^\lambda_{\delta} \, K\supseteq \F^e_{\delta_1}\, K$, for $\lambda <0$, 
respectively $\F^\lambda_{\delta} \, K \subseteq \F^e_{\delta_1}\, K$, for $\lambda >0$.
\end{proof}

A special case of Lemma \ref{lem:lfloatbound} for $\lambda=1$ has been obtained in \cite{Besau:2015a}*{Thm.\ 
5.2}.

Let $K\in\K_0(\B^n(\lambda))$ be such that $0\in \inter\, K$. For $x\in\bd\, K$ we denote by $x_\delta^K$ the 
uniquely determined intersection point of $\bd\, \F_\delta^\lambda\, K$ with the ray $\mathrm{pos}\{x\}$. We 
obtain the following corollary to Lemma \ref{lem:euclapprox}.

\begin{corollary}\label{cor:llocality}
	Let $K\in\K_0(\B^n(\lambda))$ be such that $0\in\inter\, K$ and let $x\in\bd\, K$ be a regular and exposed 
point. For $\varepsilon>0$ set $K'=K\cap B^n_\lambda(x,\varepsilon)$. Then $x\in\bd\, K'$ is a regular 
and exposed point of $K'$. Moreover, there exists $\delta_\varepsilon$ such that for all 
$\delta<\delta_\varepsilon$, we have that $x_\delta^{K'} = x_\delta^K$.
\end{corollary}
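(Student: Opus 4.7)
My plan is to reduce to Lemma \ref{lem:euclapprox}(iii) via the interpretation of the $\lambda$-floating body as a weighted Euclidean floating body. In the Euclidean model $(\B^n(\lambda), g^\lambda)$, identity (\ref{eqn:lvol}) gives $\F_\delta^\lambda\, K = \F_\delta^\mu\, K$ for $\mu = \vol_n^\lambda$, and the Euclidean density $(1+\lambda\|p\|^2)^{-(n+1)/2}$ is smooth and strictly positive on $\B^n(\lambda) \supseteq \inter\, K$, so $\mu \sim_{\inter\, K} \vol_n^e$ and all the machinery of Section 2 applies.

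That $x$ remains a regular and exposed boundary point of $K'$ is immediate: $x$ lies in the interior of $B^n_\lambda(x, \varepsilon)$, so there is a Euclidean neighborhood $U$ of $x$ with $K \cap U = K' \cap U$. Hence the unique outer Euclidean normal $N_x^e$ to $K$ at $x$ still uniquely supports $K'$ at $x$, and the hyperplane exposing $x$ in $K$ exposes $x$ in $K' \subseteq K$ as well.

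The technical step is that Lemma \ref{lem:euclapprox} truncates by a Euclidean ball, whereas $K'$ is truncated by a geodesic ball. To bridge this I choose $\varepsilon_1 > 0$ small enough that $B^n_e(x, \varepsilon_1) \subseteq B^n_\lambda(x, \varepsilon)$, which is possible since $d_\lambda$ and the Euclidean metric induce the same topology on $\B^n(\lambda)$, and set $K'' := K \cap B^n_e(x, \varepsilon_1)$. Then $K'' \subseteq K'$ and
\[
    K' \cap B^n_e(x, \varepsilon_1) = K \cap B^n_e(x, \varepsilon_1) = K''.
\]
Applying Lemma \ref{lem:euclapprox}(iii) with weight $\mu = \vol_n^\lambda$ to the pair $(K, K'')$ yields a threshold below which $x_\delta^{K''} = x_\delta^K$, and applied to the pair $(K', K'')$ (using the identity above) it yields a second threshold below which $x_\delta^{K''} = x_\delta^{K'}$. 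The minimum of the two thresholds is the desired $\delta_\varepsilon$.

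The main obstacle I anticipate is the hypothesis $0 \in \inter\, K''$ required by Lemma \ref{lem:euclapprox}(iii), which need not hold when $\varepsilon$ is small relative to $d_\lambda(0,x)$. This is resolved by first applying a $g^\lambda$-isometry sending some interior point of $K'$ to the origin: such isometries preserve $\vol_n^\lambda$ and therefore commute with the $\lambda$-floating-body construction, reducing the general case to one where $0 \in \inter\, K'$ and $\varepsilon_1$ can be chosen so that $0 \in \inter\, K''$ as well. Alternatively one observes that the locality arguments of Section 2 go through unchanged if the ray $\mathrm{pos}\{x\}$ from the origin is replaced by the ray from any fixed interior point of $K'$.
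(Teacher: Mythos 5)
Your proof is correct and follows essentially the same strategy as the paper: interpret $\F_\delta^\lambda$ as a weighted floating body, replace the geodesic ball by a smaller Euclidean ball $B^n_e(x,\varepsilon_1)\subseteq B^n_\lambda(x,\varepsilon)$, apply Lemma~\ref{lem:euclapprox}(iii), and handle $0\in\inter K''$ by a $g^\lambda$-isometry. The one genuine difference is the final step: you invoke Lemma~\ref{lem:euclapprox}(iii) twice, once for the pair $(K,K'')$ and once for $(K',K'')$, then conclude by transitivity, whereas the paper applies the lemma only once (to $(K,K'')$) and then uses monotonicity of the floating body, $K''\subseteq K'\subseteq K \Rightarrow \F_\delta^\lambda K''\subseteq \F_\delta^\lambda K'\subseteq \F_\delta^\lambda K$, to sandwich the intersections with $\mathrm{pos}\{x\}$. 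Both routes are valid; the monotonicity argument is marginally slicker because it avoids re-verifying the hypotheses of the lemma for $K'$, while your double application is perhaps more transparent. Your closing remark — that the locality arguments of Section~2 apply verbatim with the ray $\mathrm{pos}\{x\}$ replaced by the ray through $x$ from any fixed interior point — is worth keeping: it cleanly addresses the small mismatch between the isometry (which relocates the origin) and the ray appearing in the statement, a point the paper passes over silently.
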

\begin{proof}
	We may move $K$ by an isometry of $(\B^n(\lambda),g^\lambda)$ so that $0\in\inter\, K'$.
	Since the geodesic balls $B^n_\lambda(p,\alpha)$ are ellipsoids, there exists a small Euclidean 
ball with the same center $p$ that is contained in $B^n_\lambda(p,\alpha)$. Hence, without loss of generality, 
there is $\eta:=\eta(\varepsilon,x,K)>0$ such that $0\in \inter(K\cap B^n_e(x,\eta)) \subseteq \inter\, K'$. 
We set $K''=K\cap B^n_e(x,\eta)$.
	
	We apply Lemma \ref{lem:euclapprox} for $\mu=\vol_n^\lambda$ and $\varepsilon=\eta$, and obtain 
$(\F_\delta^\lambda\, K)\cap\mathrm{pos}\{x\} = (\F_\delta^\lambda\, K'') \cap \mathrm{pos}\{x\}$, for all 
$\delta<\delta_\eta$. Note that $K\subseteq L$ implies $F_\delta^\mu\, K\subseteq F_\delta^\mu\, L$. This 
yields
	\begin{align*}
		(\F_\delta^\lambda\, K)\cap\mathrm{pos}\{x\} &= (\F_\delta^\lambda\, K'') \cap \mathrm{pos}\{x\} \\ 
		&\subseteq (\F_\delta^\lambda\, K')\cap \mathrm{pos}\{x\} \subseteq (\F_\delta^\lambda\, K) \cap 
\mathrm{pos}\{x\}.
	\end{align*}
	Hence, $(\F_\delta^\lambda\, K)\cap \mathrm{pos}\{x\} = (\F_\delta^\lambda\, K')\cap \mathrm{pos}\{x\}$ 
and therefore $x^K_\delta = x^{K'}_\delta$ for all $\delta< \delta_{\eta} =:\delta_{\varepsilon}$.
\end{proof}

\subsection{Proof of Theorem \ref{thm:intro_main}}
We are now ready to prove Theorem \ref{thm:intro_main}. For a (proper) convex body $K\in\K_0(\Sp^n(\lambda))$ 
we consider the Euclidean model $(\B^n(\lambda),g^\lambda)$ for $\Sp^n(\lambda)$ and identify $K$ with an 
Euclidean convex body in $\B^n(\lambda)$ such that $0\in\inter\, K$.

Analogous to Proposition \ref{lem:hyperconevol} we obtain the following.
\begin{proposition}\label{prop:lconevol}
	Let $K,L\in\K_0(\B^n(\lambda))$ be such that $L\subseteq K$ and $0\in\inter\, L$. For $x\in\bd\, K$ we set 
$\{x_L\}=\bd\, L \cap \mathrm{pos}\{x\}$. Then
	\begin{align*}
		\vol_n^\lambda\left(K\backslash L\right) = \int\limits_{\bd\, K} \frac{x\cdot N_x^e}{\|x\|^n} 
\int\limits_{\|x_L\|}^{\|x\|} \frac{t^{n-1}}{(1+\lambda t^2)^{\frac{n+1}{2}}}\, dt\, d\vol_{\bd\, K}^e(x).
	\end{align*}
\end{proposition}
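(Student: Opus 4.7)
The plan is to mimic the proof of the hyperbolic cone volume formula (Proposition \ref{lem:hyperconevol}), but working directly in the Euclidean model $(\B^n(\lambda), g^\lambda)$ with the weight $(1+\lambda\|y\|^2)^{-(n+1)/2}$ supplied by $(\ref{eqn:lvol})$. The starting point is
\begin{align*}
\vol_n^\lambda(K\setminus L) = \int\limits_{K\setminus L}(1+\lambda\|y\|^2)^{-(n+1)/2}\, d\vol_n^e(y).
\end{align*}

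Next I would parametrize $K\setminus L$ radially from the origin. Since $0 \in \inter\, L \subseteq \inter\, K$, every ray $\{tu : t\geq 0\}$ with $u\in\S^{n-1}$ meets $\bd\, L$ and $\bd\, K$ in unique points $x_L=r_L(u)u$ and $x=r_K(u)u$. Writing the Euclidean volume in spherical coordinates gives
\begin{align*}
\vol_n^\lambda(K\setminus L) = \int\limits_{\S^{n-1}}\int\limits_{r_L(u)}^{r_K(u)}\frac{t^{n-1}}{(1+\lambda t^2)^{(n+1)/2}}\, dt\, du.
\end{align*}
Finally, I convert integration over $\S^{n-1}$ into integration over $\bd\, K$ via the radial map $u\mapsto r_K(u)u$. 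The Jacobian of this map, which is exactly the ingredient used in the proof of Proposition \ref{prop:econe}, yields at a regular boundary point $x\in\bd\, K$ the identity
\begin{align*}
du = \frac{x\cdot N_x^e}{\|x\|^n}\, d\vol_{\bd\, K}^e(x).
\end{align*}
(Equivalently: the radial projection scales $(n-1)$-dimensional area on $\S^{n-1}$ by $\|x\|^{n-1}$ together with a secant factor $\|x\|/(x\cdot N_x^e)$ accounting for the tilt between the tangent plane of $\bd\, K$ at $x$ and the tangent plane to the sphere through $x/\|x\|$.) Substituting this into the previous display immediately produces the claimed formula.

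The argument is essentially routine once one has the $\lambda$-volume density $(\ref{eqn:lvol})$ and the Euclidean radial Jacobian in hand. The only genuinely delicate point is the last step, which in full generality requires that almost every boundary point $x \in \bd\, K$ is regular (so that $N_x^e$ and the Jacobian are defined), justified as in \cite{Schneider:2014}. Since this is exactly the same measure-theoretic input that underlies Proposition \ref{prop:econe}, I would simply invoke the radial change of variables as used there rather than redo it; the presence of the extra weight $(1+\lambda t^2)^{-(n+1)/2}$ in the $t$-integrand plays no role in the coordinate transformation and is transported unchanged.
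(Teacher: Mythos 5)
Your proposal is correct and follows essentially the same route the paper indicates: the paper derives Proposition \ref{prop:lconevol} exactly as in the first step of the proof of Proposition \ref{lem:hyperconevol}, namely by applying the radial change of variables underlying Sch\"utt--Werner's Euclidean cone volume formula (Proposition \ref{prop:econe}) to the weighted integrand $(1+\lambda\|y\|^2)^{-(n+1)/2}$ supplied by (\ref{eqn:lvol}). Your rederivation of the radial Jacobian $du=\frac{x\cdot N_x^e}{\|x\|^n}\,d\vol_{\bd K}^e(x)$ via spherical coordinates is just an explicit unpacking of that same lemma, and the observation that the radially symmetric weight passes through the coordinate change unchanged is exactly the point.
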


Let $\delta>0$ be small enough, so that $0\in\inter\, \F_\delta^\lambda\, K$. 
To prove Theorem \ref{thm:intro_main} we have to show that
\begin{align*}
	\lim_{\delta\to 0^+} \frac{\vol_n^\lambda\left(K\backslash \F_\delta^\lambda\, K\right)}{\delta} = 
c_n\int\limits_{\bd\, K} H^\lambda_{n-1}(K,x)^{\frac{1}{n+1}}\, d\vol_{\bd\, K}^\lambda(x).
\end{align*}
By Proposition \ref{prop:lconevol}, we have
\begin{align}\label{eqn:maineq1}
	\frac{\vol_n^\lambda\left(K\backslash \F_\delta^\lambda\, K\right)}{\delta} = \int\limits_{\bd\, K} 
\frac{x\cdot N_x^e}{\delta\|x\|^n} \int\limits_{\|x_{\delta}^\lambda\|}^{\|x\|} \frac{t^{n-1}}{(1+\lambda 
t^2)^{\frac{n+1}{2}}}\, dt\, d\vol_{bd\, K}^e(x).
\end{align}

We will first show that the integrand is uniformly bounded in $\delta$ by an integrable function.
\begin{lemma}\label{lem:proof1}
	Let $K\in \K_0(\B^n(\lambda))$ and $0\in\inter\, K$. Then there exists $\alpha,\beta>0$ and $\delta_0>0$ 
such that $B^n_\lambda(0,\alpha)\subseteq \inter\, \F_\delta^\lambda\, K$ for all $\delta\leq \delta_0$ and $K 
\subset B^n_\lambda(0,\beta)$.
	Furthermore, for regular boundary points $x\in\bd\, K$ and for $0<\delta <\delta_0$, define
	\begin{align}\label{eqn:main2}
		f(x,\delta) := \frac{x\cdot N_x^e}{\delta\|x\|^n} \int\limits_{\|x_\delta^\lambda\|}^{\|x\|} 
\frac{t^{n-1}}{(1+\lambda t^2)^{\frac{n+1}{2}}}\, dt.
	\end{align}
	Then $f(x,\delta)$ is bounded from above for all $\delta<\delta_0$ by an integrable function $g(x)$, for 
almost all $x\in\bd\, K$.
\end{lemma}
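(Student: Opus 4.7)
The first claim follows from compactness of $K$ together with the convergence (\ref{eqn:conv_float}). Compactness gives some $\beta>0$ with $K\subset B^n_\lambda(0,\beta)$, and $0\in\inter K$ gives some $\alpha_0>0$ with $B^n_\lambda(0,\alpha_0)\subset K$. Fix any $\alpha\in(0,\alpha_0)$. Applying (\ref{eqn:conv_float}) to $\mu=\vol_n^\lambda$ yields $\F_\delta^\lambda K\to K$ in Hausdorff distance as $\delta\to 0^+$, so there exists $\delta_0>0$ with $B^n_\lambda(0,\alpha)\subseteq \inter\F_\delta^\lambda K$ for every $\delta\leq\delta_0$. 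In Euclidean coordinates set $r:=\tan^\lambda\alpha>0$ and $R:=\tan^\lambda\beta$, so that $r\leq \|x_\delta^\lambda\|\leq \|x\|\leq R$ for every regular $x\in\bd K$ and every $\delta\leq\delta_0$; in the case $\lambda<0$, $R<1/\sqrt{-\lambda}$ is automatic since $K\subset\B^n(\lambda)$.

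For the dominating function I first handle the radial weight. On $[0,R]$ the quantity $1+\lambda t^2$ is bounded below by a strictly positive constant (equal to $1+\lambda R^2>0$ when $\lambda<0$, and to $1$ when $\lambda\geq 0$), so the constant
\begin{align*}
M := \sup_{t\in[0,R]} (1+\lambda t^2)^{-(n+1)/2}
\end{align*}
is finite, and consequently
\begin{align*}
f(x,\delta) \;\leq\; M\cdot \frac{x\cdot N_x^e}{\delta\|x\|^n}\int_{\|x_\delta^\lambda\|}^{\|x\|} t^{n-1}\,dt.
\end{align*}

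The final step reduces this estimate to the classical Euclidean one via Lemma \ref{lem:lfloatbound}. Applied with center $p=0$ (with $\alpha,\beta$ as above), that lemma supplies a constant $c=c(\alpha,\beta,\lambda)>0$ such that $\F^e_{c\delta}\, K\subseteq \F^\lambda_\delta\, K$ for every $\delta\leq\delta_0$; along the ray through $x$ this gives $\|x_{c\delta}^e\|\leq \|x_\delta^\lambda\|$ and hence
\begin{align*}
f(x,\delta) \;\leq\; Mc\cdot \frac{x\cdot N_x^e}{(c\delta)\|x\|^n}\int_{\|x_{c\delta}^e\|}^{\|x\|} t^{n-1}\,dt.
\end{align*}
The right-hand factor, evaluated at parameter $c\delta$, is precisely the integrand that appears in the Schütt--Werner proof of the Euclidean version of Theorem \ref{thm:intro_main}. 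In \cite{Schuett:1990} this integrand is shown to be bounded above, uniformly in small $\delta$, by a pointwise integrable function $g_E\colon \bd K\to \R$ on the set of regular boundary points. Setting $g(x):=Mc\cdot g_E(x)$ then yields the required majorant.

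The principal technical obstacle is the last step: invoking (or reproducing in the present context) the classical Schütt--Werner dominating function for the Euclidean floating body, and verifying that the constant $c$ produced by Lemma \ref{lem:lfloatbound} can be chosen uniformly in $\delta\leq\delta_0$, so that the rescaling $\delta\mapsto c\delta$ does not interfere with the uniform majorization. Once these points are settled, the reduction to the Euclidean case is entirely routine and the proof is complete.
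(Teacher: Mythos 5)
Your proposal is correct and follows essentially the same route as the paper's own proof: bound the radial weight $(1+\lambda t^2)^{-(n+1)/2}$ by a constant on the compact radial range, invoke Lemma \ref{lem:lfloatbound} (with $p=0$) to produce a fixed rescaling $\widetilde\delta = c\delta$ with $\F^e_{c\delta}K \subseteq \F^\lambda_\delta K$, and thereby reduce to the Schütt--Werner dominating function for the Euclidean floating body (\cite{Schuett:1990}, Lemmas 5 and 6). The concern you flag at the end — whether $c$ is uniform in $\delta$ — is immediate: Lemma \ref{lem:lfloatbound} yields $c=\cosh(\sqrt{-\lambda}\alpha)^{-2}$ (for $\lambda<0$) or $c=\cos(\sqrt{\lambda}\beta)^{-2}$ (for $\lambda>0$), depending only on the fixed radii $\alpha,\beta$ and on $\lambda$, not on $\delta$, so the reduction is indeed routine.
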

\begin{proof}
	Since $0\in\inter\, K$, there is $\delta_0>0$ such that $0\in\inter\, \F_{\delta_0}^\lambda\, K$. Thus 
there exists $\alpha>0$ such that $B^n_\lambda(0,\alpha)\subseteq \inter \F_{\delta_0}^\lambda\, K$ and, by 
monotonicity, this yields $B^n_\lambda(0,\alpha)\subseteq \F_{\delta}^\lambda\, K$, for all $\delta\leq 
\delta_0$. Furthermore, since $K$ is bounded, there exists $\beta>0$ such that $K\subseteq 
B^n_\lambda(0,\beta)$. By (\ref{eqn:dlambda}), this implies that $\tan^\lambda \alpha \leq 
\|x_\delta^\lambda\| \leq \|x\| \leq \tan^\lambda \beta$ for all $\delta<\delta_0$. 
	
	We set
	\begin{align}\label{eqn:tdelta}
		\widetilde{\delta} := \begin{cases}
		                      	\delta\cosh(\sqrt{-\lambda}\alpha)^{-2} & \text{if $\lambda <0 $,}\\
		                      	\delta\cos(\sqrt{\lambda}\beta)^{-2} & \text{if $\lambda > 0$.}
		                      \end{cases}
	\end{align}
	By Lemma \ref{lem:lfloatbound}, we have that $\F_{\widetilde{\delta}}^e\, K\subseteq \F_\delta^\lambda\, 
K$ and therefore
	$\|x_{\widetilde{\delta}}^e-x\| \geq \|x_\delta^\lambda - x\|$.
	For $\|x_\delta^\lambda\|\leq t\leq \|x\|$, we obtain
	\begin{align}\label{eqn:bound2}
		\frac{1}{(1+\lambda t^2)^{\frac{n+1}{2}}} \leq 
			\begin{cases}
		    	\cosh(\sqrt{-\lambda}\beta)^{n+1} & \text{if $\lambda <0$,}\\
		    	\cos(\sqrt{\lambda}\alpha)^{n+1} & \text{if $\lambda >0$.}
		    \end{cases}
	\end{align}
	We conclude that
	\begin{align}\label{eqn:bound3}
		\frac{1}{\delta}\left(\frac{x}{\|x\|}\cdot N_x^e\right) \int\limits_{\|x_\delta^\lambda\|}^{\|x\|} 
\left(\frac{t}{\|x\|}\right)^{n-1}(1+\lambda t^2)^{-\frac{n+1}{2}}\, dt \leq C \left(\frac{x}{\|x\|}\cdot 
N_x^e\right) \frac{\left\|x_{\widetilde{\delta}}^e-x\right\|}{\widetilde{\delta}},
	\end{align}
	where we put, for $\lambda <0$, $C:=\cosh(\sqrt{-\lambda}\beta)^{n+1}\cosh(\sqrt{-\lambda}\alpha)^{-2}$, 
respectively, for $\lambda >0$, $C:=\cos(\sqrt{\lambda}\alpha)^{n+1}\cos(\sqrt{\lambda}\beta)^{-2}$.
	
	This concludes the proof, since the right-hand side of $(\ref{eqn:bound3})$ is the same integrand we 
obtain for the Euclidean (convex) floating body and is therefore bounded uniformly in $\widetilde{\delta}$ by 
an integrable function for almost all $x\in\bd\, K$, by \cite{Schuett:1990}*{Lem.\ 5 and Lem.\ 6}.
\end{proof}

It only remains to show that (\ref{eqn:main2}) converges point-wise for almost all boundary points. Since 
almost all boundary points are normal, see page \pageref{cor:GaussKronecker}, it is sufficient to show the 
following.
\begin{lemma}\label{lem:proof2}
	Let $K\in \K_{0}(\B^n(\lambda))$ and $0\in\inter\, K$. Then, for normal boundary points $x\in\bd\, K$, we 
have that
	\begin{align}\label{eqn:gausseucld}
		\lim_{\delta\to 0^+}
		 \frac{x\cdot N_x^e}{\delta\|x\|^n} \int\limits_{\|x_\delta^\lambda\|}^{\|x\|} 
\frac{t^{n-1}}{(1+\lambda t^2)^{\frac{n+1}{2}}}\, dt 
		 = c_n \frac{H^e_{n-1}(K,x)^{\frac{1}{n+1}}}{(1+\lambda\|x\|^2)^{\frac{n-1}{2}}},
	\end{align}
	where $c_n = \tfrac{1}{2}\left((n+1)/\kappa_{n-1}\right)^{2/(n+1)}$.
\end{lemma}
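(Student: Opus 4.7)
The approach is to reduce the pointwise limit at a normal boundary point $x$ to the Euclidean Schütt--Werner asymptotic \cite{Schuett:1990} by exploiting the fact that on small caps near $x$ the $\lambda$-volume density $(1+\lambda\|y\|^2)^{-(n+1)/2}$ is asymptotically constant, equal to $(1+\lambda\|x\|^2)^{-(n+1)/2}$.

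First I localize: for small $\varepsilon>0$ set $K'=K\cap B^n_\lambda(x,\varepsilon)$, so that Corollary \ref{cor:llocality} gives $x_\delta^\lambda(K)=x_\delta^\lambda(K')$ for all $\delta$ small enough. Since $\|x\|$, $N_x^e$ and $H^e_{n-1}(K,x)$ depend only on the germ of $\bd\,K$ at $x$, we may work with $K'$. The hyperplane supporting $\F_\delta^\lambda K'$ at $x_\delta^\lambda$ cuts off a $\lambda$-cap $C_\delta\subseteq B^n_\lambda(x,\varepsilon)$ of $\lambda$-volume $\delta^{(n+1)/2}$; by continuity of the density and since $C_\delta$ shrinks towards $x$, its Euclidean volume is $\widetilde{\delta}^{(n+1)/2}$ with $\widetilde{\delta}=\delta(1+\lambda\|x\|^2)(1+o(1))$ as $\delta\to 0^+$. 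Equivalently, Lemma \ref{lem:lfloatbound} applied to $K'$ sandwiches $\F_\delta^\lambda K'$ between two Euclidean floating bodies $\F^e_{\delta_1}K'$ and $\F^e_{\delta_2}K'$ whose parameters satisfy $\delta_1/\delta,\,\delta_2/\delta \to 1+\lambda\|x\|^2$ upon letting $\varepsilon\to 0$. Either way, $x_\delta^\lambda$ asymptotically coincides with $x_{\widetilde{\delta}}^e$ for this rescaled Euclidean parameter.

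The Euclidean Schütt--Werner pointwise result at the normal boundary point $x$ (still normal for $K'$) reads
\[
    \lim_{\widetilde{\delta}\to 0^+}\frac{x\cdot N_x^e}{\widetilde{\delta}\,\|x\|^n}\int_{\|x^e_{\widetilde{\delta}}\|}^{\|x\|} t^{n-1}\,dt \;=\; c_n\,H^e_{n-1}(K,x)^{\frac{1}{n+1}}.
\]
Combined with $\widetilde{\delta}/\delta \to 1+\lambda\|x\|^2$, this yields
\[
    \lim_{\delta\to 0^+}\frac{x\cdot N_x^e}{\delta\,\|x\|^n}\int_{\|x^\lambda_\delta\|}^{\|x\|} t^{n-1}\,dt \;=\; (1+\lambda\|x\|^2)\,c_n\,H^e_{n-1}(K,x)^{\frac{1}{n+1}}.
\]
Since $\|x^\lambda_\delta\|\to\|x\|$, the weight $(1+\lambda t^2)^{-(n+1)/2}$ equals $(1+\lambda\|x\|^2)^{-(n+1)/2}(1+o(1))$ uniformly on $[\|x^\lambda_\delta\|,\|x\|]$; pulling this factor out of the integral in (\ref{eqn:gausseucld}) produces the extra multiplier $(1+\lambda\|x\|^2)^{-(n+1)/2}$ and gives the claimed value $c_n\,H^e_{n-1}(K,x)^{1/(n+1)}(1+\lambda\|x\|^2)^{-(n-1)/2}$.

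The main obstacle is to rigorously justify the cap-volume comparison in the second paragraph: one must control, uniformly as $\delta\to 0^+$, the error in approximating the density on $C_\delta$ by its value at $x$, so that the induced rescaling $\widetilde{\delta}=\delta(1+\lambda\|x\|^2)(1+o(1))$ does propagate faithfully into the Schütt--Werner asymptotic. This can be achieved either through the sandwich of Lemma \ref{lem:lfloatbound} combined with an $\varepsilon\to 0$ limiting argument, or by a direct comparison of the cone-volume integrands in Propositions \ref{prop:econe} and \ref{prop:lconevol} using the localization results of Section 2.
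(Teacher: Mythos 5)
Your proposal is correct and follows essentially the same route as the paper: localize to $K'=K\cap B^n_\lambda(x,\varepsilon)$ via Corollary \ref{cor:llocality}, sandwich $\F_\delta^\lambda K'$ between Euclidean floating bodies with rescaled parameters $\delta_1,\delta_2$ via Lemma \ref{lem:lfloatbound}, invoke the Sch\"utt--Werner Euclidean pointwise limit, and let $\varepsilon\to 0^+$ so that $\delta_1/\delta,\delta_2/\delta\to 1+\lambda\|x\|^2$. The paper merely adds one small organizational step: it first disposes of the degenerate case $H^e_{n-1}(K,x)=0$ directly from the uniform upper bound of Lemma \ref{lem:proof1}, and then runs the $\varepsilon$-sandwich only for $H^e_{n-1}(K,x)>0$, which is exactly the ``main obstacle'' you flagged at the end.
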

\begin{proof}
	A normal boundary point $x\in\bd\, K$ has a unique outer unit normal $N_x^e$ and the 
Gauss--Kronecker curvature $H_{n-1}^e(K,x)$ exists.
	We first consider the case that $H^e_{n-1}(K,x)=0$ and show that the left-hand side of 
(\ref{eqn:gausseucld}) converges to $0$, for $\delta\to 0^+$. 
	With $\widetilde{\delta}$ as defined by (\ref{eqn:tdelta}) in Lemma \ref{lem:lfloatbound}, we find again 
the upper bound (\ref{eqn:bound3}). 
	The function in the upper bound is the same as the integrand we obtain for the Euclidean convex floating 
body and therefore it converges to $0$, for $\widetilde{\delta}\to 0^+$, by \cite{Schuett:1990}*{Lem.\ 
7 and Lem.\ 10}. This implies that
	\begin{align*}
		\limsup_{\delta\to 0^+} \frac{x\cdot N_x^e}{\delta\|x\|^n} \int\limits_{\|x_\delta^\lambda\|}^{\|x\|} 
\frac{t^{n-1}}{(1+\lambda t^2)^{\frac{n+1}{2}}}\, dt 
		\leq C \limsup_{\widetilde{\delta}\to 0^+} \left(\frac{x}{\|x\|}\cdot N_x^e\right) 
\frac{\left\|x_{\widetilde{\delta}}^e-x\right\|}{\widetilde{\delta}} 
		= 0.
	\end{align*}
	
	Next, let $H^e_{n-1}(K,x)>0$.
	Let $\varepsilon>0$ be arbitrary and set $K' = K\cap B^n_\lambda(x,\varepsilon)$.
	Furthermore, let $p$ be a point inside $K'$ and on the segment spanned by $x$ and the origin, that is, 
$p\in\inter\, K' \cap \mathrm{pos}\{x\}$.
	For $\alpha=0$ and $\beta=\varepsilon$ define $\delta_1$ and $\delta_2$ as in Lemma \ref{lem:lfloatbound}.
	Then, for $\delta$ small enough, we have $p\in \inter\, \F_\delta^\lambda\, K'$. Thus, for $\lambda<0$,
	$\F^e_{\delta_1}\, K'\subseteq \F_\delta^\lambda\, K' \subseteq \F^e_{\delta_2}\, K'$, respectively, for 
$\lambda >0$, $\F^e_{\delta_1}\, K'\supseteq \F_\delta^\lambda\, K' \supseteq \F^e_{\delta_2}\, K'$. Corollary 
\ref{cor:llocality} implies that
	\begin{align*}
		\{x_\delta^\lambda\}=\bd\, \F_\delta^\lambda\, K \cap \mathrm{conv}(x,p) = \bd\, \F_\delta^\lambda\, 
K'\cap\mathrm{conv}(x,p).
	\end{align*}
	This yields
	\begin{align}\label{eqn:bound1}
		\begin{cases}
			\|x-x_{\delta_1}^e\| \geq \|x-x_{\delta}^\lambda\| \geq \|x-x_{\delta_2}^e\| & \text{if $\lambda 
<0$,}\\
			\|x-x_{\delta_1}^e\| \leq \|x-x_{\delta}^\lambda\| \leq \|x-x_{\delta_2}^e\| & \text{if 
$\lambda>0$.}
		\end{cases}
	\end{align}
	Hence for $\lambda <0$, we have
	\begin{align*}
		\frac{x\cdot N_x^e}{\delta\|x\|^n} \int\limits_{\|x_\delta^\lambda\|}^{\|x\|} 
\frac{t^{n-1}}{(1+\lambda t^2)^{\frac{n+1}{2}}}\, dt
		& \geq 
		\frac{\frac{x}{\|x\|}\cdot N_x^e}{(1+\lambda\|x_\delta^\lambda\|^2)^{\frac{n+1}{2}}} 
\left(\frac{\|x_\delta^\lambda\|}{\|x\|}\right)^{n-1} \frac{\delta_2}{\delta} 
\frac{\|x-x_{\delta_2}^e\|}{\delta_2},\\
		\frac{x\cdot N_x^e}{\delta\|x\|^n} \int\limits_{\|x_\delta^\lambda\|}^{\|x\|} 
\frac{t^{n-1}}{(1+\lambda t^2)^{\frac{n+1}{2}}}\, dt
		&\leq
		\frac{\frac{x}{\|x\|}\cdot N_x^e}{(1+\lambda \|x\|^2)^{\frac{n+1}{2}}} \frac{\delta_1}{\delta} 
\frac{\|x-x_{\delta_1}^e\|}{\delta_1}.
	\end{align*}
	Conversely, if $\lambda>0$, then
	\begin{align*}
		\frac{x\cdot N_x^e}{\delta\|x\|^n} \int\limits_{\|x_\delta^\lambda\|}^{\|x\|} 
\frac{t^{n-1}}{(1+\lambda t^2)^{\frac{n+1}{2}}}\, dt
		& \geq 
		\frac{\frac{x}{\|x\|}\cdot N_x^e}{(1+\lambda\|x\|^2)^{\frac{n+1}{2}}} 
\left(\frac{\|x_\delta^\lambda\|}{\|x\|}\right)^{n-1} \frac{\delta_1}{\delta} 
\frac{\|x-x_{\delta_1}^e\|}{\delta_1},\\
		\frac{x\cdot N_x^e}{\delta\|x\|^n} \int\limits_{\|x_\delta^\lambda\|}^{\|x\|} 
\frac{t^{n-1}}{(1+\lambda t^2)^{\frac{n+1}{2}}}\, dt
		&\leq
		\frac{\frac{x}{\|x\|}\cdot N_x^e}{(1+\lambda \|x_\delta^\lambda\|^2)^{\frac{n+1}{2}}} 
\frac{\delta_2}{\delta} \frac{\|x-x_{\delta_2}^e\|}{\delta_2}.
	\end{align*}
	
	To finish the proof we first notice that the functions that appear on the right-hand side of the above 
inequalities are again related to the integrand that is obtained for the Euclidean convex floating body. 
Hence, by \cite{Schuett:1990}*{Lem.\ 7 and Lem.\ 11}, for $\delta_*\in\{\delta_1,\delta_2\}$,
	\begin{align*}
		\lim_{\delta_*\to 0^+} \frac{x}{\|x\|}\cdot N_x^e \frac{\|x-x_{\delta_*}^e\|}{\delta_*} = c_n 
H_{n-1}^e(K,x)^{\frac{1}{n+1}}.
	\end{align*}
	By the choice of $p$, we have $\left|\|x\|-\varepsilon\right| \leq \|p\| \leq \|x\|$. For $\lambda <0$, by 
the definition of $\delta_1$ and $\delta_2$, there exist positive constants $C_1, C_2>0$, such that
	\begin{align*}
		\frac{\delta_1}{\delta} \leq (1+\lambda\|x\|^2)(1+C_1\varepsilon) \text{ and } \frac{\delta_2}{\delta} 
\geq (1+\lambda\|x\|^2)(1-C_2\varepsilon).
	\end{align*}
	Therefore
	\begin{align*}
		\limsup_{\delta\to 0^+} \frac{x\cdot N_x^e}{\delta\|x\|^n} \int\limits_{\|x_\delta^\lambda\|}^{\|x\|} 
\frac{t^{n-1}}{(1+\lambda t^2)^{\frac{n+1}{2}}}\, dt
		&\leq \limsup_{\delta\to 0^+}
		\frac{\frac{x}{\|x\|}\cdot N_x^e}{(1+\lambda \|x\|^2)^{\frac{n+1}{2}}} \frac{\delta_1}{\delta} 
\frac{\|x-x_{\delta_1}^e\|}{\delta_1}\\
		&\leq c_n \frac{H_{n-1}^e(K,x)^{\frac{1}{n+1}}}{(1+\lambda\|x\|^2)^{\frac{n-1}{2}}} 
(1+C_1\varepsilon),
	\end{align*}
	and similarly
	\begin{align*}
		\liminf_{\delta\to 0^+} \frac{x\cdot N_x^e}{\delta\|x\|^n} \int\limits_{\|x_\delta^\lambda\|}^{\|x\|} 
\frac{t^{n-1}}{(1+\lambda t^2)^{\frac{n+1}{2}}}\, dt
		&\geq c_n \frac{H_{n-1}^e(K,x)^{\frac{1}{n+1}}}{(1+\lambda\|x\|^2)^{\frac{n-1}{2}}} 
(1-C_2\varepsilon).
	\end{align*}
	Since $\varepsilon>0$ was arbitrary, we conclude (\ref{eqn:gausseucld}), for $\lambda<0$. For $\lambda >0$ 
the argument is analogous.	
\end{proof}

Combining Lemma \ref{lem:proof1}, Lemma \ref{lem:proof2}, (\ref{eqn:lboundary}) and (\ref{eqn:lgauss}), we 
conclude
\begin{align}\label{eqn:proof_main}
	\lim_{\delta\to 0^+} \frac{\vol_n^\lambda\left(K\backslash \F_\delta^\lambda\, K\right)}{\delta} 
	&= c_n\int\limits_{\bd\, K} \frac{H_{n-1}^e(K,x)^{\frac{1}{n+1}}}{(1+\lambda\|x\|^2)^{\frac{n-1}{2}}} \, 
d\vol_{\bd\, K}^e(x) \\
	&= c_n\int\limits_{\bd\, K} H_{n-1}^\lambda(K,x)^{\frac{1}{n+1}} \, d\vol_{\bd\, K}^\lambda(x).\notag
\end{align}
This finishes the proof of Theorem \ref{thm:intro_main}.

\section{The Floating Area in Real Space Forms}

We denote the Borel $\sigma$-algebra of a metric space $(X,d)$ by $\mathcal{B}(X)$. 
For $K\in \K_0(\B^n(\lambda))$ and $\omega\in\mathcal{B}(\B^n(\lambda))$ we conclude, by Theorem 
\ref{thm:intro_main2} and (\ref{eqn:proof_main}), that
\begin{align}\label{eqn:lfloatmeasure}
	\lim_{\delta\to 0^+} \frac{\vol_n^\lambda\left((K\backslash \F_\delta^\lambda\, 
K)\cap\omega\right)}{\delta} = \int\limits_{(\bd\, K)\,\cap\, \omega} H_{n-1}^\lambda(K,x)^{\frac{1}{n+1}}\, 
d\vol_{bd\, K}^\lambda(x).
\end{align}

\begin{definition}
	The \emph{$\lambda$-floating measure} $\Omega^\lambda(.,\!.)$ is defined, for 
$K\!\!\in\!\K_0(\Sp^n(\lambda))$ and $\omega\in\B(\Sp^n(\lambda))$, by
	\begin{align}\label{eqn:deffloatarea}
		\Omega^\lambda(K,\omega) = \int\limits_{(\bd\, K)\,\cap\, \omega} 
H_{n-1}^\lambda(K,x)^{\frac{1}{n+1}}\, d\vol_{bd\, K}^\lambda(x).
	\end{align}
	The $\lambda$-floating area $\Omega^\lambda(.)$ of a convex body $K$ is 
$\Omega^\lambda(K)=\Omega^\lambda(K,\Sp^n(\lambda))$.
\end{definition}

For $\lambda>0$, we distinguish between proper and non-proper convex bodies. Recall that a convex body is 
proper, if and only if it does not contain two antipodal points. Equivalently, a convex body is 
proper if and only if it is contained in an open half-space (open hemisphere). 
By (\ref{eqn:lfloatmeasure}), the definition (\ref{eqn:deffloatarea}) makes sense for proper convex bodies.
Non-proper convex bodies $K$ with non-empty interior are either the whole space or a \emph{lune}. 
A $k$-lune is the convex hull $\mathrm{conv}(S,L)$ of a $k$-dimensional totally geodesic subspace ($k$-sphere) 
$S$ and a proper convex body $L$ in an $(n-k-1)$-dimensional totally geodesic subspace polar to $S$.
Thus, for non-proper convex bodies we either have $K=\Sp^n(\lambda)$ and therefore $\bd\, K=\emptyset$ or $K$ 
is a lune and the boundary is ``flat'', that is, $H_{n-1}^\lambda(K,x) = 0$ for almost all boundary points 
$x\in\bd\, K$. Therefore we set $\Omega^\lambda(K,\omega)=0$ for non-proper convex bodies. See also
\cite{Besau:2015a} for more details.

Finally, from the definition (\ref{eqn:deffloatarea}) it is obvious that the $\lambda$-floating area vanishes 
for ``flat'' bodies. In particular, the $\lambda$-floating area for polytopes is zero.

\subsection{Proof of Theorem \ref{thm:intro_main2}}

We first prove the valuation property. The proof is analogously to the proof for the affine surface area in 
\cite{Schuett:1993}. Let $K, L \in \mathcal{K}_0(\Sp^n(\lambda))$ such that $K\cup 
L\in\mathcal{K}_0(\Sp^n(\lambda))$. We have to show
\begin{align}\label{eqn:propval}
	\Omega^\lambda(K,\omega)+\Omega^\lambda(L,\omega) = \Omega^\lambda(K\cup L, \omega) + \Omega^\lambda(K\cap 
L,\omega).
\end{align}
We first observe 
\begin{align*}
		\bd\, K &= (\bd\, K \cap \bd\, L) \cup (\bd\, K \cap \inter\, L) \cup (\bd\,K \cap L^c),\\
		\bd\, L &= (\bd\, K \cap \bd\, L) \cup (\inter\, K\cap \bd\, L) \cup (K^c\cap \bd\, L),\\
		\bd(K\cap L) &= (\bd\, K\cap\bd\, L) \cup (\bd\, K\cap \inter\, L) \cup (\inter\, K \cap \bd\, L),\\
		\bd(K\cup L) &= (\bd\, K\cap\bd\, L) \cup (\bd\, K \cap L^c) \cup (K^c\cap \bd\, L),
\end{align*}
where $K^c = \Sp^n(\lambda)\backslash K$ and $L^c=\Sp^n(\lambda)\backslash L$.
Then (\ref{eqn:propval}) reduces to
\begin{align}\label{eqn:propval2}\begin{split}
	&\int\limits_{(\bd\, K\,\cap\, \bd\, L)\,\cap\, \omega} H_{n-1}^\lambda(K,x)^{\frac{1}{n+1}} +
	H_{n-1}^\lambda(L,x)^{\frac{1}{n+1}}\, d\vol_{bd\, L}^\lambda(x)\\
	&=
	\int\limits_{(\bd\, K\,\cap\, \bd\, L)\,\cap \,\omega} H_{n-1}^\lambda(K\cup L,x)^{\frac{1}{n+1}} +
	H_{n-1}^\lambda(K\cap L,x)^{\frac{1}{n+1}}\, d\vol_{bd\, L}^\lambda(x).
	\end{split}
\end{align}
Locally around any point $x\in\bd K \cap \bd\, L$, we use the Euclidean model $(\B^n(\lambda),g^\lambda)$. 
Hence, $H_{n-1}^\lambda(K,x)$ and $H_{n-1}^\lambda(K,x)$ are related by (\ref{eqn:lgauss}) at normal boundary 
points $x\in\bd\, K$. With \cite{Schuett:1993}*{Lem.\ 5}, we conclude that 
\begin{align*}
	H_{n-1}^\lambda(K\cup L,x) &= \min\left\{ H_{n-1}^\lambda(K,x), H_{n-1}^\lambda(L,x)\right\},\\
	H_{n-1}^\lambda(K\cap L,x) &= \max\left\{ H_{n-1}^\lambda(K,x), H_{n-1}^\lambda(L,x)\right\}.
\end{align*}
This verifies (\ref{eqn:propval2}) and therefore $\Omega^\lambda(.,\omega)$ is a valuation on 
$\K_0(\Sp^n(\lambda))$.

Since $\Omega^\lambda(.,\omega)$ can be seen as a curvature measure on $\bd\, K$, the proof of the 
upper-semicontinuity of $\Omega^\lambda(.,\omega)$ is analogous to the proofs presented in \cite{Ludwig:2001}. 
We include the following short argument:
Let $(K_\ell)_{\ell\in\mathbb{N}}$ be a sequence of convex bodies converging to $K\in\K_0(\Sp^n(\lambda))$. By 
the valuation property we may assume, for $\lambda>0$, that $K\cup \bigcup_{\ell\in\mathbb{N}} K_\ell$ is 
contained in an open half-space. We choose a Euclidean model $(\B^n(\lambda),g^\lambda)$ and identify $K_\ell$ 
and $K$ with Euclidean convex bodies. Hence,
\begin{align*}
	\Omega^\lambda(K,\omega) = \int\limits_{(\bd\, K)\,\cap\, \omega} 
\frac{H_{n-1}^e(K,x)^{\frac{1}{n+1}}}{(1+\lambda\|x\|^2)^{\frac{n-1}{2}}} \, d\vol_{\bd\, K}^e(x).
\end{align*}
The density $f^\lambda(x) := (1+\lambda\|x\|^2)^{-(n-1)/2}$ is continuous and
\begin{align*}
	\Omega^0(K,\omega) = \int\limits_{(\bd\, K)\,\cap\, \omega} H_{n-1}^e(K,x)^\frac{1}{n+1}\, d\vol_{\bd\, 
K}^e(x)
\end{align*}
is the classical affine surface area. Thus $\Omega^0(.,\omega)$ is upper semicontiuous, see e.g., 
\cite{Lutwak:1991}. To finish the prove let $\varepsilon>0$. By compactness of $K$ and continuity of 
$f^\lambda$, we find a finite partition of $\bd\,K\cap \omega$ into measurable subsets $(\omega_j)_{j=0}^N$ 
and points $x_j\in \omega_j$ such that $| f^\lambda(x)-f^\lambda(x_j)| < \varepsilon$, for all $x\in 
\omega_j$. Therefore
\begin{align*}
	\limsup_{\ell\in\mathbb{N}} \Omega^\lambda(K_\ell,\omega)
	&\leq \sum_{j=0}^N (f^\lambda(x_j)+\varepsilon) \limsup_{\ell\in\mathbb{N}} \Omega^0(K_\ell,\omega_j)\\
	&= \sum_{j=0}^N (f^\lambda(x_j)+\varepsilon) \Omega^0(K,\omega_j) \leq
	\Omega^\lambda(K,\omega) +\varepsilon 2N\Omega^0(K).
\end{align*}
Since $\varepsilon>0$ was arbitrary, this proves the upper semicontinuity of $\Omega^\lambda(.,\omega)$.

Finally, the fact that $\Omega^\lambda(.,\omega)$ is invariant under isometries is obvious, since it is a 
intrinsic notion. 
For $\lambda=0$, the \emph{equi-affine} transformations are characterized as bijective automorphisms that map 
lines to lines, are measurable and preserve volume.
Note that $\Sp^n(\lambda)$ is \emph{rigid} for $\lambda\neq 0$, in the sense that there are no bijective 
mappings $\varphi\colon\Sp^n(\lambda)\to\Sp^n(\lambda)$, other than isometries, that map geodesics to 
geodesics, are measurable and preserve volume.

\subsection{Isoperimetric inequality}

For $K\in\K_0(\R^n)$, the classical and well-known inequality associated with the affine surface area is
\begin{align}\label{eqn:affinineq}
	\mathrm{as}_1(K) \leq n \kappa_n^{\frac{2}{n+1}} \vol_n^e(K)^{\frac{n-1}{n+1}},
\end{align}
with equality if and only if $K$ is an ellipsoid. A natural question is, whether an extension of this 
inequality holds for the $\lambda$-floating area. Inequality (\ref{eqn:affinineq}) can be restated as: For 
all convex bodies of volume $\alpha$ the ball of radius $(\alpha/\kappa_n)^{1/n}$ maximizes the affine surface 
area, i.e.,
\begin{align*}
	 \sup_{K\in \K_0(\R^n)} \left\{ \mathrm{as}_1(K) : \vol_n^e(K)=\alpha\right\} = 
\mathrm{as}_1\left(B^n_e\left(0,\left(\alpha/\kappa_n\right)^{1/n}\right)\right).
\end{align*}
Therefore, we define
\begin{align}\label{eqn:liso}
	C^\lambda(\alpha) := \sup_{K\in \K_0(\Sp^n(\lambda))}\left\{ \Omega^\lambda(K): \vol_n^\lambda(K)=\alpha 
\right\}.
\end{align}
Then, for $\lambda=0$ and by (\ref{eqn:affinineq}), we conclude
\begin{align*}
	C^0(\alpha) = n\kappa_n^{\frac{2}{n+1}} \alpha^{\frac{n-1}{n+1}}.
\end{align*}
For $\lambda>0$, $\K_0(\Sp^n(\lambda))$ is compact. Since $\Omega^\lambda(.)$ is upper semi-continuous, there 
exists $K^*\in \K_0(\Sp^n(\lambda))$ such that $\Omega^\lambda(K^*) = C^\lambda(\alpha)$. We conjecture, that 
$K^*$ is a geodesic ball, that is, for arbitrary $p\in\Sp^n(\lambda)$, we have
\begin{align*}
	C^\lambda(\alpha) \overset{?}{=} \Omega^\lambda\left(B^n_\lambda\left(p,r\right)\right),
\end{align*}
where $r$ is determined by $\alpha= \vol_n^\lambda\left(B^n_\lambda\left(p,r\right)\right)$.

For $\lambda <0$, the problem becomes more intricate, since $\Sp^n(\lambda)$ admits unbounded closed convex 
sets with non-empty interior and finite volume. For example in hyperbolic space the ideal simplices are among them. 
Ideal simplices are simplices with vertices at infinity and they have finite hyperbolic volume. In the 
Euclidean model $(\B^n,g^h)$, such ideal simplices are just Euclidean simplices inscribed in the sphere at 
infinity $\S^{n-1}=\bd\, \B^n$. More generally, any polyhedral with vertices at infinity has finite volume. 
This is immediate by the valuation property of hyperbolic volume and the fact that any polyhedral can be 
partitioned into simplices. By monotonicity of the hyperbolic volume, we also conclude that any closed convex 
subset that is contained in a polyhedral with vertices at infinity has finite hyperbolic volume. We denote by 
$\K^{\infty}_0(\Sp^n(\lambda))$ the set of closed convex sets with non-empty interior and finite volume.
Hence, for $\lambda<0$, the space of convex bodies $\K_0(\Sp^n(\lambda))$ endowed with the symmetric 
difference metric $\theta^\lambda$ is not complete and the closure is $\K^{\infty}_0(\Sp^n(\lambda))$.

Extremizers of (\ref{eqn:liso}) could appear in $K^\infty_0(\Sp^n(\lambda))$ for $\lambda<0$, since any 
unbounded convex set in $\K^\infty_0(\Sp_n(\lambda))$ can be approximated with respect to $\theta^\lambda$ by a 
sequence of convex bodies $(K_\ell)_{\ell\in\mathbb{N}}$ in $\K_0(\Sp^n(\lambda))$ such that 
$\vol_n^\lambda(\K_\ell)=\alpha$. However, we conjecture that also in the hyperbolic setting geodesic balls 
will be extremal.

\vspace{0.2cm}
\textbf{Acknowledgement.}
The authors would like to thank the Institute for Mathematics and Applications (IMA), University of Minnesota.
It was during their stay there that part of the paper was written. We also would like to thank Monika Ludwig and Franz E.\ Schuster for valuable comments and the referee for the careful reading.

\bibliographystyle{amsplain}

\renewcommand{\bibsection}{
	\section*{\refname}
	\addcontentsline{toc}{section}{References}
}

\renewcommand{\MR}[1]{ }
\begin{bibdiv}
\begin{biblist}

\bib{Alekseevskij:1993}{incollection}{
      author={Alekseevskij, D.~V.},
      author={Vinberg, {\`E}.~B.},
      author={Solodovnikov, A.~S.},
       title={{Geometry of spaces of constant curvature}},
        date={1993},
   booktitle={{Geometry, {II}}},
      series={{Encyclopaedia Math. Sci.}},
      volume={29},
   publisher={Springer, Berlin},
       pages={1\ndash 138},
         url={http://dx.doi.org/10.1007/978-3-662-02901-5_1},
      review={\MR{1254932 (95b:53042)}},
}

\bib{Andrews:1996}{article}{
      author={Andrews, Ben},
       title={{Contraction of convex hypersurfaces by their affine normal}},
        date={1996},
        ISSN={0022-040X},
     journal={J. Differential Geom.},
      volume={43},
      number={2},
       pages={207\ndash 230},
         url={http://projecteuclid.org/euclid.jdg/1214458106},
      review={\MR{1424425}},
}

\bib{Andrews:1999}{article}{
      author={Andrews, Ben},
       title={{The affine curve-lengthening flow}},
        date={1999},
        ISSN={0075-4102},
     journal={J. Reine Angew. Math.},
      volume={506},
       pages={43\ndash 83},
      review={\MR{1665677}},
}

\bib{Artstein-Avidan:2012}{article}{
      author={Artstein-Avidan, S.},
      author={Klartag, B.},
      author={Sch{\"u}tt, C.},
      author={Werner, Elisabeth~M.},
       title={{Functional affine-isoperimetry and an inverse logarithmic
  {S}obolev inequality}},
        date={2012},
        ISSN={0022-1236},
     journal={J. Funct. Anal.},
      volume={262},
      number={9},
       pages={4181\ndash 4204},
      review={\MR{2899992}},
}

\bib{Bernig:2014}{article}{
      author={Bernig, Andreas},
      author={Fu, Joseph H.~G.},
      author={Solanes, Gil},
       title={{Integral geometry of complex space forms}},
        date={2014},
        ISSN={1016-443X},
     journal={Geom. Funct. Anal.},
      volume={24},
      number={2},
       pages={403\ndash 492},
      review={\MR{3192033}},
}

\bib{Besau:2015}{article}{
      author={{Besau}, F.},
      author={{Schuster}, F.~E.},
       title={{Binary operations in spherical convex geometry}},
        date={2015},
     journal={Indiana Univ. Math. J.},
      volume={to appear},
}

\bib{Besau:2015a}{article}{
      author={{Besau}, F.},
      author={{Werner}, Elisabeth~M.},
       title={{The Spherical Convex Floating Body}},
        date={2014-11},
     journal={ArXiv e-prints},
      eprint={1411.7664},
}

\bib{Blaschke:1923}{book}{
      author={Blaschke, W.},
       title={{{V}orlesung {\"u}ber {D}ifferentialgeometrie {II}, {A}ffine
  {D}ifferntialgeometrie}},
   publisher={Springer-Verlag},
     address={Berlin},
        date={1923},
}

\bib{Boeroeczky:2000a}{article}{
      author={B{\"o}r{\"o}czky, Jr.~K{\'a}roly},
       title={{Approximation of general smooth convex bodies}},
        date={2000},
        ISSN={0001-8708},
     journal={Adv. Math.},
      volume={153},
      number={2},
       pages={325\ndash 341},
      review={\MR{1770932}},
}

\bib{Boeroeczky:2000}{article}{
      author={B{\"o}r{\"o}czky, Jr.~K{\'a}roly},
       title={{Polytopal approximation bounding the number of {$k$}-faces}},
        date={2000},
        ISSN={0021-9045},
     journal={J. Approx. Theory},
      volume={102},
      number={2},
       pages={263\ndash 285},
      review={\MR{1742207}},
}

\bib{Caglar:2014a}{article}{
      author={{Caglar}, U.},
      author={{Fradelizi}, M.},
      author={{Guedon}, O.},
      author={{Lehec}, J.},
      author={{Sch{\"u}tt}, C.},
      author={{Werner}, Elisabeth~M.},
       title={{Functional versions of {$L_p$}-affine surface area and entropy
  inequalities}},
        date={2014},
     journal={ArXiv e-prints},
      eprint={1402.3250},
}

\bib{Caglar:2014}{article}{
      author={Caglar, Umut},
      author={Werner, Elisabeth~M.},
       title={{Divergence for {$s$}-concave and log concave functions}},
        date={2014},
        ISSN={0001-8708},
     journal={Adv. Math.},
      volume={257},
       pages={219\ndash 247},
      review={\MR{3187648}},
}

\bib{Caglar:2015}{article}{
      author={Caglar, Umut},
      author={Werner, Elisabeth~M.},
       title={{Mixed {$f$}-divergence and inequalities for log-concave
  functions}},
        date={2015},
        ISSN={0024-6115},
     journal={Proc. Lond. Math. Soc. (3)},
      volume={110},
      number={2},
       pages={271\ndash 290},
         url={http://dx.doi.org/10.1112/plms/pdu055},
      review={\MR{3335279}},
}

\bib{Chou:2006}{article}{
      author={Chou, Kai-Seng},
      author={Wang, Xu-Jia},
       title={{The {$L_p$-M}inkowski problem and the {M}inkowski problem in
  centroaffine geometry}},
        date={2006},
        ISSN={0001-8708},
     journal={Adv. Math.},
      volume={205},
      number={1},
       pages={33\ndash 83},
      review={\MR{2254308 (2007f:52019)}},
}

\bib{Cianchi:2009}{article}{
      author={Cianchi, Andrea},
      author={Lutwak, Erwin},
      author={Yang, Deane},
      author={Zhang, Gaoyong},
       title={{Affine Moser-Trudinger and Morrey-Sobolev inequalities}},
        date={2009-11},
     journal={Calculus of Variations and Partial Differential Equations},
      volume={36},
      number={3},
       pages={419\ndash 436},
}

\bib{Eisenhart:1997}{book}{
      author={Eisenhart, Luther~Pfahler},
       title={{Riemannian geometry}},
      series={{Princeton Landmarks in Mathematics}},
   publisher={Princeton University Press, Princeton, NJ},
        date={1997},
        ISBN={0-691-02353-0},
        note={Eighth printing, Princeton Paperbacks},
      review={\MR{1487892 (98h:53001)}},
}

\bib{Gardner:2002}{article}{
      author={Gardner, R.~J.},
       title={{The {B}runn-{M}inkowski inequality}},
        date={2002},
        ISSN={0273-0979},
     journal={Bull. Amer. Math. Soc. (N.S.)},
      volume={39},
      number={3},
       pages={355\ndash 405},
      review={\MR{1898210 (2003f:26035)}},
}

\bib{Gardner:1998}{article}{
      author={Gardner, R.~J.},
      author={Zhang, Gaoyong},
       title={{Affine inequalities and radial mean bodies}},
        date={1998},
        ISSN={0002-9327},
     journal={Amer. J. Math.},
      volume={120},
      number={3},
       pages={505\ndash 528},
  url={http://muse.jhu.edu/journals/american_journal_of_mathematics/v120/120.3gardner.pdf},
      review={\MR{1623396}},
}

\bib{Gardner:2006}{book}{
      author={Gardner, Richard~J.},
       title={{Geometric tomography}},
     edition={Second},
      series={{Encyclopedia of Mathematics and its Applications}},
   publisher={Cambridge University Press},
     address={Cambridge},
        date={2006},
      volume={58},
        ISBN={978-0-521-68493-4; 0-521-68493-5},
      review={\MR{2251886}},
}

\bib{Gardner:2014}{article}{
      author={Gardner, Richard~J.},
      author={Hug, Daniel},
      author={Weil, Wolfgang},
       title={{The {O}rlicz-{B}runn-{M}inkowski theory: a general framework,
  additions, and inequalities}},
        date={2014},
        ISSN={0022-040X},
     journal={J. Differential Geom.},
      volume={97},
      number={3},
       pages={427\ndash 476},
         url={http://projecteuclid.org/euclid.jdg/1406033976},
      review={\MR{3263511}},
}

\bib{Gardner:2015}{article}{
      author={Gardner, Richard~J.},
      author={Hug, Daniel},
      author={Weil, Wolfgang},
      author={Ye, Deping},
       title={{The dual {O}rlicz-{B}runn-{M}inkowski theory}},
        date={2015},
        ISSN={0022-247X},
     journal={J. Math. Anal. Appl.},
      volume={430},
      number={2},
       pages={810\ndash 829},
         url={http://dx.doi.org/10.1016/j.jmaa.2015.05.016},
      review={\MR{3351982}},
}

\bib{Groemer:1996}{book}{
      author={Groemer, H.},
       title={{Geometric applications of {F}ourier series and spherical
  harmonics}},
      series={{Encyclopedia of Mathematics and its Applications}},
   publisher={Cambridge University Press, Cambridge},
        date={1996},
      volume={61},
        ISBN={0-521-47318-7},
         url={http://dx.doi.org/10.1017/CBO9780511530005},
      review={\MR{1412143 (97j:52001)}},
}

\bib{Gruber:1988}{article}{
      author={Gruber, Peter~M.},
       title={{Volume approximation of convex bodies by inscribed polytopes}},
        date={1988},
        ISSN={0025-5831},
     journal={Math. Ann.},
      volume={281},
      number={2},
       pages={229\ndash 245},
      review={\MR{949830}},
}

\bib{Gruber:1993}{incollection}{
      author={Gruber, Peter~M.},
       title={{Aspects of approximation of convex bodies}},
        date={1993},
   booktitle={{Handbook of convex geometry, {V}ol.\ {A}, {B}}},
   publisher={North-Holland, Amsterdam},
       pages={319\ndash 345},
      review={\MR{1242984 (95b:52003)}},
}

\bib{Gruber:2007}{book}{
      author={Gruber, Peter~M.},
       title={{Convex and discrete geometry}},
      series={{Grundlehren der Mathematischen Wissenschaften [Fundamental
  Principles of Mathematical Sciences]}},
   publisher={Springer},
     address={Berlin},
        date={2007},
      volume={336},
        ISBN={978-3-540-71132-2},
      review={\MR{2335496}},
}

\bib{Haberl:2012}{article}{
      author={Haberl, Christoph},
       title={{Minkowski valuations intertwining with the special linear
  group}},
        date={2012},
        ISSN={1435-9855},
     journal={J. Eur. Math. Soc. (JEMS)},
      volume={14},
      number={5},
       pages={1565\ndash 1597},
      review={\MR{2966660}},
}

\bib{Haberl:2014}{article}{
      author={Haberl, Christoph},
      author={Parapatits, Lukas},
       title={{The centro-affine {H}adwiger theorem}},
        date={2014},
        ISSN={0894-0347},
     journal={J. Amer. Math. Soc.},
      volume={27},
      number={3},
       pages={685\ndash 705},
         url={http://dx.doi.org/10.1090/S0894-0347-2014-00781-5},
      review={\MR{3194492}},
}

\bib{Haberl:2009}{article}{
      author={Haberl, Christoph},
      author={Schuster, Franz~E.},
       title={{Asymmetric affine {$L_p$ S}obolev inequalities}},
        date={2009},
        ISSN={0022-1236},
     journal={J. Funct. Anal.},
      volume={257},
      number={3},
       pages={641\ndash 658},
      review={\MR{2530600 (2010j:46068)}},
}

\bib{Haberl:2009a}{article}{
      author={Haberl, Christoph},
      author={Schuster, Franz~E.},
       title={{General {$L_p$} affine isoperimetric inequalities}},
        date={2009},
        ISSN={0022-040X},
     journal={J. Differential Geom.},
      volume={83},
      number={1},
       pages={1\ndash 26},
         url={http://projecteuclid.org/euclid.jdg/1253804349},
      review={\MR{2545028}},
}

\bib{Hug:1996}{article}{
      author={Hug, Daniel},
       title={{Contributions to affine surface area}},
        date={1996},
        ISSN={0025-2611},
     journal={Manuscripta Math.},
      volume={91},
      number={3},
       pages={283\ndash 301},
      review={\MR{1416712}},
}

\bib{Ivaki:2013}{article}{
      author={Ivaki, Mohammad N.},
       title={On the stability of the $p$-affine isoperimetric inequality},
     journal={J. Geom. Anal.},
      volume={24},
        date={2014},
      number={4},
       pages={1898\ndash 1911},
        issn={1050-6926},
      review={\MR{3261724}},
         doi={10.1007/s12220-013-9401-1},
}

\bib{Ivaki:2015}{article}{
      author={Ivaki, Mohammad~N.},
       title={{Convex bodies with pinched {M}ahler volume under the
  centro-affine normal flows}},
        date={2015},
        ISSN={0944-2669},
     journal={Calc. Var. Partial Differential Equations},
      volume={54},
      number={1},
       pages={831\ndash 846},
         url={http://dx.doi.org/10.1007/s00526-014-0807-9},
      review={\MR{3385182}},
}

\bib{Ivaki:2013a}{article}{
      author={Ivaki, Mohammad~N.},
      author={Stancu, Alina},
       title={{Volume preserving centro-affine normal flows}},
        date={2013},
        ISSN={1019-8385},
     journal={Comm. Anal. Geom.},
      volume={21},
      number={3},
       pages={671\ndash 685},
      review={\MR{3078952}},
}

\bib{Kobayashi:1963}{book}{
      author={Kobayashi, Shoshichi},
      author={Nomizu, Katsumi},
       title={{Foundations of differential geometry. {V}ol {I}}},
   publisher={Interscience Publishers, a division of John Wiley \& Sons, New
  York-London},
        date={1963},
      review={\MR{0152974 (27 \#2945)}},
}

\bib{Lee:1997}{book}{
      author={Lee, John~M.},
       title={{Riemannian manifolds}},
      series={{Graduate Texts in Mathematics}},
   publisher={Springer-Verlag, New York},
        date={1997},
      volume={176},
        ISBN={0-387-98271-X},
        note={An introduction to curvature},
      review={\MR{1468735 (98d:53001)}},
}

\bib{Leichtweiss:1986}{article}{
      author={Leichtwei{\ss}, Kurt},
       title={{Zur {A}ffinoberfl{\"a}che konvexer {K}{\"o}rper}},
        date={1986},
        ISSN={0025-2611},
     journal={Manuscripta Math.},
      volume={56},
      number={4},
       pages={429\ndash 464},
      review={\MR{860732}},
}

\bib{Livshyts:2015}{article}{
      author={Livshyts, Galyna},
       title={{Maximal surface area of polytopes with respect to log-concave
  rotation invariant measures}},
        date={2015},
        ISSN={0196-8858},
     journal={Adv. in Appl. Math.},
      volume={70},
       pages={54\ndash 69},
         url={http://dx.doi.org/10.1016/j.aam.2015.06.004},
      review={\MR{3388865}},
}

\bib{Ludwig:1999a}{article}{
      author={Ludwig, Monika},
       title={{Asymptotic approximation of smooth convex bodies by general
  polytopes}},
        date={1999},
        ISSN={0025-5793},
     journal={Mathematika},
      volume={46},
      number={1},
       pages={103\ndash 125},
      review={\MR{1750407}},
}

\bib{Ludwig:2001}{article}{
      author={Ludwig, Monika},
       title={{On the semicontinuity of curvature integrals}},
        date={2001},
        ISSN={0025-584X},
     journal={Math. Nachr.},
      volume={227},
       pages={99\ndash 108},
      review={\MR{1840557 (2002c:52007)}},
}

\bib{Ludwig:2010a}{article}{
      author={Ludwig, Monika},
       title={{General affine surface areas}},
        date={2010},
        ISSN={0001-8708},
     journal={Adv. Math.},
      volume={224},
      number={6},
       pages={2346\ndash 2360},
      review={\MR{2652209}},
}

\bib{Ludwig:2010c}{article}{
      author={Ludwig, Monika},
       title={{Minkowski areas and valuations}},
        date={2010},
        ISSN={0022-040X},
     journal={J. Differential Geom.},
      volume={86},
      number={1},
       pages={133\ndash 161},
         url={http://projecteuclid.org/euclid.jdg/1299766685},
      review={\MR{2772547}},
}

\bib{Ludwig:1999}{article}{
      author={Ludwig, Monika},
      author={Reitzner, Matthias},
       title={{A characterization of affine surface area}},
        date={1999},
        ISSN={0001-8708},
     journal={Adv. Math.},
      volume={147},
      number={1},
       pages={138\ndash 172},
      review={\MR{1725817}},
}

\bib{Ludwig:2010}{article}{
      author={Ludwig, Monika},
      author={Reitzner, Matthias},
       title={{A classification of {${\rm SL}(n)$} invariant valuations}},
        date={2010},
        ISSN={0003-486X},
     journal={Ann. of Math. (2)},
      volume={172},
      number={2},
       pages={1219\ndash 1267},
      review={\MR{2680490}},
}

\bib{Lutwak:1991}{article}{
      author={Lutwak, Erwin},
       title={{Extended affine surface area}},
        date={1991},
        ISSN={0001-8708},
     journal={Adv. Math.},
      volume={85},
      number={1},
       pages={39\ndash 68},
      review={\MR{1087796}},
}

\bib{Lutwak:1993}{article}{
      author={Lutwak, Erwin},
       title={{The {B}runn-{M}inkowski-{F}irey theory. {I}. {M}ixed volumes and
  the {M}inkowski problem}},
        date={1993},
        ISSN={0022-040X},
     journal={J. Differential Geom.},
      volume={38},
      number={1},
       pages={131\ndash 150},
         url={http://projecteuclid.org/euclid.jdg/1214454097},
      review={\MR{1231704 (94g:52008)}},
}

\bib{Lutwak:1996}{article}{
      author={Lutwak, Erwin},
       title={{The {B}runn-{M}inkowski-{F}irey theory. {II}. {A}ffine and
  geominimal surface areas}},
        date={1996},
        ISSN={0001-8708},
     journal={Adv. Math.},
      volume={118},
      number={2},
       pages={244\ndash 294},
      review={\MR{1378681}},
}

\bib{Lutwak:1995}{article}{
      author={Lutwak, Erwin},
      author={Oliker, Vladimir},
       title={{On the regularity of solutions to a generalization of the
  {M}inkowski problem}},
        date={1995},
        ISSN={0022-040X},
     journal={J. Differential Geom.},
      volume={41},
      number={1},
       pages={227\ndash 246},
         url={http://projecteuclid.org/getRecord?id=euclid.jdg/1214456011},
      review={\MR{1316557}},
}

\bib{Lutwak:2000}{article}{
      author={Lutwak, Erwin},
      author={Yang, Deane},
      author={Zhang, Gaoyong},
       title={{{$L_p$} affine isoperimetric inequalities}},
        date={2000},
        ISSN={0022-040X},
     journal={J. Differential Geom.},
      volume={56},
      number={1},
       pages={111\ndash 132},
         url={http://projecteuclid.org/getRecord?id=euclid.jdg/1090347527},
      review={\MR{1863023}},
}

\bib{Lutwak:2002}{article}{
      author={Lutwak, Erwin},
      author={Yang, Deane},
      author={Zhang, Gaoyong},
       title={{Sharp affine {$L_p$ S}obolev inequalities}},
        date={2002},
        ISSN={0022-040X},
     journal={J. Differential Geom.},
      volume={62},
      number={1},
       pages={17\ndash 38},
         url={http://projecteuclid.org/getRecord?id=euclid.jdg/1090425527},
      review={\MR{1987375}},
}

\bib{ONeill:1983}{book}{
      author={O'Neill, Barrett},
       title={{Semi-{R}iemannian geometry}},
      series={{Pure and Applied Mathematics}},
   publisher={Academic Press, Inc. [Harcourt Brace Jovanovich, Publishers], New
  York},
        date={1983},
      volume={103},
        ISBN={0-12-526740-1},
        note={With applications to relativity},
      review={\MR{719023 (85f:53002)}},
}

\bib{Paouris:2012}{article}{
      author={Paouris, Grigoris},
      author={Werner, Elisabeth~M.},
       title={{Relative entropy of cone measures and {$L_p$} centroid bodies}},
        date={2012},
        ISSN={0024-6115},
     journal={Proc. Lond. Math. Soc. (3)},
      volume={104},
      number={2},
       pages={253\ndash 286},
      review={\MR{2880241}},
}

\bib{Paouris:2013}{article}{
      author={Paouris, Grigoris},
      author={Werner, Elisabeth~M.},
       title={{On the approximation of a polytope by its dual {$L_p$}-centroid
  bodies}},
        date={2013},
        ISSN={0022-2518},
     journal={Indiana Univ. Math. J.},
      volume={62},
      number={1},
       pages={235\ndash 248},
         url={http://dx.doi.org/10.1512/iumj.2013.62.4875},
      review={\MR{3158508}},
}

\bib{Parapatits:2012}{article}{
      author={Parapatits, Lukas},
      author={Schuster, Franz~E.},
       title={{The {S}teiner formula for {M}inkowski valuations}},
        date={2012},
        ISSN={0001-8708},
     journal={Adv. Math.},
      volume={230},
      number={3},
       pages={978\ndash 994},
      review={\MR{2921168}},
}

\bib{Parapatits:2013}{article}{
      author={Parapatits, Lukas},
      author={Wannerer, Thomas},
       title={{On the inverse {K}lain map}},
        date={2013},
        ISSN={0012-7094},
     journal={Duke Math. J.},
      volume={162},
      number={11},
       pages={1895\ndash 1922},
      review={\MR{3090780}},
}

\bib{Ratcliffe:2006}{book}{
      author={Ratcliffe, John~G.},
       title={{Foundations of hyperbolic manifolds}},
     edition={Second},
      series={{Graduate Texts in Mathematics}},
   publisher={Springer},
     address={New York},
        date={2006},
      volume={149},
        ISBN={978-0387-33197-3; 0-387-33197-2},
      review={\MR{2249478 (2007d:57029)}},
}

\bib{Reitzner:2002}{article}{
      author={Reitzner, Matthias},
       title={{Random points on the boundary of smooth convex bodies}},
        date={2002},
        ISSN={0002-9947},
     journal={Trans. Amer. Math. Soc.},
      volume={354},
      number={6},
       pages={2243\ndash 2278 (electronic)},
      review={\MR{1885651}},
}

\bib{Schneider:2014}{book}{
      author={Schneider, Rolf},
       title={{Convex bodies: the {B}runn-{M}inkowski theory}},
     edition={expanded},
      series={{Encyclopedia of Mathematics and its Applications}},
   publisher={Cambridge University Press, Cambridge},
        date={2014},
      volume={151},
        ISBN={978-1-107-60101-7},
      review={\MR{3155183}},
}

\bib{Schuster:2012}{article}{
      author={Schuster, Franz~E.},
      author={Wannerer, Thomas},
       title={{{${\rm GL}(n)$} contravariant {M}inkowski valuations}},
        date={2012},
        ISSN={0002-9947},
     journal={Trans. Amer. Math. Soc.},
      volume={364},
      number={2},
       pages={815\ndash 826},
      review={\MR{2846354}},
}

\bib{Schuett:1991}{article}{
      author={Sch{\"u}tt, Carsten},
       title={{The convex floating body and polyhedral approximation}},
        date={1991},
        ISSN={0021-2172},
     journal={Israel J. Math.},
      volume={73},
      number={1},
       pages={65\ndash 77},
      review={\MR{1119928}},
}

\bib{Schuett:1993}{article}{
      author={Sch{\"u}tt, Carsten},
       title={{On the affine surface area}},
        date={1993},
        ISSN={0002-9939},
     journal={Proc. Amer. Math. Soc.},
      volume={118},
      number={4},
       pages={1213\ndash 1218},
      review={\MR{1181173}},
}

\bib{Schuett:1994}{article}{
      author={Sch{\"u}tt, Carsten},
       title={{Random polytopes and affine surface area}},
        date={1994},
        ISSN={0025-584X},
     journal={Math. Nachr.},
      volume={170},
       pages={227\ndash 249},
      review={\MR{1302377}},
}

\bib{Schuett:1990}{article}{
      author={Sch{\"u}tt, Carsten},
      author={Werner, Elisabeth~M.},
       title={{The convex floating body}},
        date={1990},
        ISSN={0025-5521},
     journal={Math. Scand.},
      volume={66},
      number={2},
       pages={275\ndash 290},
      review={\MR{1075144}},
}

\bib{Schuett:2000}{article}{
      author={Sch{\"u}tt, Carsten},
      author={Werner, Elisabeth~M.},
       title={{Random polytopes with vertices on the boundary of a convex
  body}},
        date={2000},
        ISSN={0764-4442},
     journal={C. R. Acad. Sci. Paris S{\'e}r. I Math.},
      volume={331},
      number={9},
       pages={697\ndash 701},
      review={\MR{1797754}},
}

\bib{Schuett:2003}{incollection}{
      author={Sch{\"u}tt, Carsten},
      author={Werner, Elisabeth~M.},
       title={{Polytopes with vertices chosen randomly from the boundary of a
  convex body}},
        date={2003},
   booktitle={{Geometric aspects of functional analysis}},
      series={{Lecture Notes in Math.}},
      volume={1807},
   publisher={Springer, Berlin},
       pages={241\ndash 422},
      review={\MR{2083401}},
}

\bib{Schuett:2004}{article}{
      author={Sch{\"u}tt, Carsten},
      author={Werner, Elisabeth~M.},
       title={{Surface bodies and {$p$}-affine surface area}},
        date={2004},
        ISSN={0001-8708},
     journal={Adv. Math.},
      volume={187},
      number={1},
       pages={98\ndash 145},
      review={\MR{2074173}},
}

\bib{Werner:2002}{article}{
      author={Werner, Elisabeth~M.},
       title={{The {$p$}-affine surface area and geometric interpretations}},
        date={2002},
     journal={Rend. Circ. Mat. Palermo (2) Suppl.},
      number={70, part II},
       pages={367\ndash 382},
        note={IV International Conference in ``Stochastic Geometry, Convex
  Bodies, Empirical Measures and Applications to Engineering Science'', Vol. II
  (Tropea, 2001)},
      review={\MR{1962608 (2004c:52006)}},
}

\bib{Werner:2010}{article}{
      author={Werner, Elisabeth~M.},
      author={Ye, Deping},
       title={{Inequalities for mixed {$p$}-affine surface area}},
        date={2010},
        ISSN={0025-5831},
     journal={Math. Ann.},
      volume={347},
      number={3},
       pages={703\ndash 737},
         url={http://dx.doi.org/10.1007/s00208-009-0453-2},
      review={\MR{2640049}},
}

\bib{Werner:2011}{article}{
      author={Werner, Elisabeth~M.},
      author={Ye, Deping},
       title={{On the homothety conjecture}},
        date={2011},
        ISSN={0022-2518},
     journal={Indiana Univ. Math. J.},
      volume={60},
      number={1},
       pages={1\ndash 20},
         url={http://dx.doi.org/10.1512/iumj.2011.60.4299},
      review={\MR{2952407}},
}

\bib{Werner:2012}{article}{
      author={Werner, Elisabeth~M.},
       title={{R{\'e}nyi divergence and {$L_p$}-affine surface area for convex
  bodies}},
        date={2012},
        ISSN={0001-8708},
     journal={Adv. Math.},
      volume={230},
      number={3},
       pages={1040\ndash 1059},
      review={\MR{2921171}},
}

\bib{Ye:2015}{article}{
      author={Ye, Deping},
       title={{{$L_p$} geominimal surface areas and their inequalities}},
        date={2015},
        ISSN={1073-7928},
     journal={Int. Math. Res. Not. IMRN},
      number={9},
       pages={2465\ndash 2498},
         url={http://dx.doi.org/10.1093/imrn/rnu009},
      review={\MR{3344678}},
}

\bib{Ye:2015a}{article}{
      author={Ye, Deping},
       title={{New {O}rlicz affine isoperimetric inequalities}},
        date={2015},
        ISSN={0022-247X},
     journal={J. Math. Anal. Appl.},
      volume={427},
      number={2},
       pages={905\ndash 929},
         url={http://dx.doi.org/10.1016/j.jmaa.2015.02.084},
      review={\MR{3323015}},
}

\bib{Zhang:1999}{article}{
      author={Zhang, Gaoyong},
       title={{The affine {S}obolev inequality}},
        date={1999},
        ISSN={0022-040X},
     journal={J. Differential Geom.},
      volume={53},
      number={1},
       pages={183\ndash 202},
         url={http://projecteuclid.org/getRecord?id=euclid.jdg/1214425451},
      review={\MR{1776095 (2001m:53136)}},
}

\end{biblist}
\end{bibdiv}

\vfill
\begin{minipage}{0.4\textwidth}
	\begin{flushleft}
		\small Florian Besau\\[0.1cm]
		\tiny Department of Mathematics\\
		Case Western Reserve University\\
		Cleveland, Ohio 44106, U.S.A.\ \\[0.1cm]
		\small\texttt{florian.besau@case.edu}
	\end{flushleft}
\end{minipage}
\hfill
\begin{minipage}{0.4\textwidth}
	\begin{flushleft}
		\small Elisabeth M.\ Werner\\[0.1cm]
		\tiny Department of Mathematics\\
		Case Western Reserve University\\
		Cleveland, Ohio 44106, U.S.A.\ \\[0.1cm]
		\small \texttt{elisabeth.werner@case.edu}
	\end{flushleft}
\end{minipage}

\end{document}